\patchcmd{\@makechapterhead}{\large}{\normalsize}{}{}% for \chapter
\patchcmd{\@makechapterhead}{\large}{\normalsize}{}{}% for \chapter
\patchcmd{\@makeschapterhead}{\normalsize}{\normalsize}{}{}% for \chapter*
\g@addto@macro\normalsize{\setlength\abovedisplayskip{4pt}}
\g@addto@macro\normalsize{\setlength\belowdisplayskip{4pt}}
\newtheorem{theorem}{Theorem}[section]
\newtheorem{lemma}{Lemma}[section]
\newtheorem{remark}{Remark}[section]
\let\oldref\ref
\renewcommand{\ref}[1]{(\oldref{#1})}  % stupid kludge for laTeX
\renewcommand{\eqref}[1]{(\oldref{#1})} %BB03dec18 
\font\titlefonrm=ptmb scaled \magstep3
\newbox\boxaddrone \newbox\boxaddrtwo
\def\half{{\frac{1}{2}}}
\def\Ltwo{2}
\def\Linf{\infty}
\begin{document}

\title{\titlefonrm Recovery of multiple coefficients in a reaction-diffusion equation}
\author{Barbara Kaltenbacher\footnote{
Department of Mathematics,
Alpen-Adria-Universit\"at Klagenfurt.
barbara.kaltenbacher@aau.at.}
\and
William Rundell\footnote{
Department of Mathematics,
Texas A\&M University,
%College Station,
Texas 77843. % USA.
rundell@math.tamu.edu}
}
\maketitle

\begin{abstract}

This paper considers the inverse problem of recovering both the
unknown, spatially-dependent conductivity $a(x)$ and the potential $q(x)$
in a parabolic equation from overposed data consisting of the value of
solution profiles taken at a later time $T$.
We show both uniqueness results and the convergence of an iteration
scheme designed to recover these coefficients.
We also allow a more general setting, in particular when the usual
time derivative is replaced by one of fractional order and when
the potential term is coupled with a known nonlinearity $f$ of the form
$q(x)f(u)$.
\\[1ex]
{\bf Keywords:} Inverse problem, undetermined coefficients, diffusion equation
\end{abstract}

%%%%%%%%%%%%%%%%%%%%%%%%%%%%%%%%%%%%%%%%%%%%%%%%%%%%

\section{Introduction}\label{sect:introduction}

We consider the inverse problem of recovering coefficients from the uniformly
elliptic operator $-\mathbb{L}$ within a diffusion model.
In this case we will include both parabolic as well as anomalous diffusion
processes and the situation we describe will be general enough to include known
nonlinear reaction terms.
Reaction-diffusion equations such as these occur throughout the sciences
and we give some specific examples in the next section.
Let $\,\mathbb{L}u = -\nabla\cdot(a(x)\nabla u) + q(x)u$ be defined on a domain
$\Omega\subset\mathbb{R}^n$ with smooth boundary $\partial\Omega$
and  where the two coefficients
$a$ and $q$ are the quantities to be determined in the inverse problem.
In this setting our basic model equation is thus
\begin{equation}\label{eqn:basic_pde_parabolic}
u_t(x,t) + \mathbb{L}u(x,t) = r(x,t,u)
\end{equation}
where $r(x,t,u)$ is a known forcing function.
Extensions of the above are of course possible and we mention
the case of a reaction-diffusion model in which \ref{eqn:basic_pde_parabolic}
becomes
\begin{equation}\label{eqn:nonlin_pde}
u_t(x,t) -\nabla\cdot(a(x)\nabla u(x,t)) = q(x)f(u) + r(x,t,u)
\end{equation}
where the form of the nonlinear driving term $f(u)$ is assumed known.
Boundary conditions for \ref{eqn:basic_pde_parabolic} will be of the impedance form
\begin{equation}\label{eqn:bdry_cond}
a\frac{\partial u}{\partial\nu} + \gamma u = h,\qquad
x\in\partial\Omega,\quad t>0,
\end{equation}
and we impose the initial condition
\begin{equation}\label{eqn:init_cond}
u(x,0) = u_0(x),\qquad x\in\Omega.
\end{equation}
Examples of these models are in ecology where $u$ represents the population
density at a fixed point $x$ and time $t$ and $f(u)$ is frequently taken
to be quadratic in $u$ as in the Fisher model;
or in chemical reactions where $f$ is cubic in
the case of the combustion theory of Zeldovich and Frank-Kamenetskii,
\cite{Grindrod:1996, Murray:2002}.
Now of course the recovery of the coefficients $a$ and $q$ requires
over-posed data and we shall assume this is a spatial measurement at a fixed
time $T$ for two different sets of boundary conditions or the value of
a single solution at two different later times $t=T_1,\;T_2$.
Under different assumptions on the continuous time random walk {\sc ctrw}
model one obtains alternative diffusion processes and we consider the
subdiffusion model based on fractional time derivatives.
Now the basic equations take the form
\begin{equation}\label{eqn:basic_pde}
D^\alpha_t u(x,t) + \mathbb{L}u(x,t) = r(x,t,u)
\end{equation}
where $D_t^\alpha$ denotes the 
Djrbashian-Caputo derivative of order $\alpha$.
Setting all of the above in context, we want to also understand how
the different diffusive processes effect the ability to recover the
coefficients in the inverse problem.

\section{Background}

Undetermined coefficient problems based on the equation \ref{eqn:basic_pde}
have a long history in the literature.
In general, the over-posed data has taken one of two types:
the data suggested above, namely the spatial  values of $u(x,T)$ for fixed
time $T$; or time trace data, typically measured at discrete
points $\{x_i\}$ on the lateral boundary of the cylinder $\Omega\times(0,t)$
for $t>0$.

In the latter case if the boundary conditions in \ref{eqn:bdry_cond}
are of impedance type with $\gamma<\infty$ then this is taken to be Dirichlet
values and if $\gamma=\infty$, that is Dirichlet conditions are imposed
then the over-posed data is flux values at $\{x_i\}$.
The latter situation has been the most common, in particular
in one spatial dimension, beginning with the work of Cannon and of Pierce,
\cite{Cannon:1975,Pierce:1979} and
continuing in the fractional diffusion case by,
for example, \cite{CNYY:2009,RundellYamamoto:2018}.
The techniques used have mostly revolved around the eigenfunction
expansion of the solution (in the homogeneous case)
\begin{equation}\label{eqn:eigen_rep}
u(x,t) = \sum_{n=1}^\infty \langle u_0,\phi_n\rangle
E_{\alpha,1}(-\lambda_n t^\alpha)\,\phi_n(x)
\end{equation}
where $\{\lambda_n,\,\phi_n(x)\}$ are the eigenvalue/eigenfunctions of 
$-\mathbb{L}$ on $\Omega$ and $E_{\alpha,\beta}$ is the Mittag-Leffler function.
When $\alpha=1$ this recovers the usual exponential function leading to
the familiar parabolic solution.
This representation is based on the 
Djrbashian-Caputo derivative from the initial point $a$
$\,{}^C_a D^\alpha_t f  = I_a^\alpha \frac{df}{ds}\,$
where
$\;I_a^\alpha f(x) = \frac{1}{\Gamma(\alpha)}\int_a^x \frac{f(s)}{(x-s)^{1-\alpha}}\,ds\;$  is the Abel fractional integral operator.
The subdiffusion case based on this derivative is well-documented in the 
literature and for background of particular relevance to inverse problems
we refer to the papers
\cite{KaltenbacherRundell:2019a, KaltenbacherRundell:2019b, JinRundell:2015}.

Assuming the initial value $u_0$ is given, then evaluating \ref{eqn:eigen_rep}
at $x_i\in \partial\Omega$ from the over-posed data values
 gives a Dirichlet series which
(under specific circumstances) can lead to recovering the spectrum
$\{\lambda_n\}$ and certain norming constants of the eigenfunction.
This offers little in higher space  dimensions, but in $\mathbb{R}^1$
this conversion to an inverse Sturm-Liouville problem can lead to a
uniqueness proof and, in theory, a reconstruction algorithm.
However there are serious difficulties.
The inversion of the Dirichlet series to obtain $\{\lambda_n\}$
given the asymptotic form $\lambda_n \approx Cn^2$,
is an extremely ill-conditioned problem.
Thus the ability to effectively recover many eigenvalues is limited,
even with extremely small values of $t$ being measured.
In order to even accomplish this one must ensure that the initial
data is chosen so that $\langle u_0,\phi_n\rangle\not=0$ for any $n$.
Since we don't know the eigenfunctions this is difficult to guarantee
other than through an argument that this is expected to occur
with probability zero or using very special $u_0$ such as a delta distribution
as in \cite{CNYY:2009}.
In addition, from any collection of such spectral data one can
only determine a single coefficient of $-\mathbb{L}$.
Indeed, the Liouville transform shows that the entire operator $-\mathbb{L}$
can be mapped into one with only a composite potential term $Q(x)$ appearing
in such a manner that the original spectrum is preserved, \cite{CCPR1997}.

The specification of spatial information avoids many of these drawbacks
and again has been well studied.
 See \cite{Rundell:1987,Isakov:1991} for the parabolic case and the recovery
of the single coefficient $q(x)$ and also \cite{ZhangZhi:2017}
for the subdiffusion case.
In \cite{KaltenbacherRundell:2019b} this was generalized to include
a nonlinear term, namely $q(x)f(u)$ where $f(u)$ was known and the spatial
factor $q(x)$ had to be determined from initial and final data.
In the other direction \cite{KaltenbacherRundell:2019b} showed that a
reaction term $f(u)$ could be recovered from such data.
As far as the authors are aware the current work is the first attempt
at showing a uniqueness theorem and a reconstruction algorithm for the
case of two independent coefficients even in the parabolic case.
However in the case of the elliptic operators this has been accomplished
albeit in a limited setting.  See, for example, \cite{Isakov:2006}.

The outline of this paper is as follows.
In the next section we present the recovery algorithm for the
pair of coefficients and then proceed by giving conditions
that lead to both uniqueness and convergence.
The final sections shows some quite detailed numerical experiments
that show both the feasibility and the constraints of the method.

We will use a variety of spaces and norms during the analysis and make some
comments on their notation here.

By $C^{k,\beta}(\Omega)$ we mean the Schauder spaces of those functions
whose $k^{\mbox{{th}}}$ derivative is H\"older continuous
of order $\beta$ on the set $\Omega$.
The H\"older norm of a function being $\;\sup_{x\in\Omega}|f(x)| +
\sup_{x,y\in\Omega}\bigl((|f(x)|-|f(y)|)/(|x-y|)\bigr)$.

Sobolev spaces will be denoted by the usual  notation $H^{k,p}(\Omega)$, 
and in case of $p=2$ simply $H^{k}(\Omega)$.
However, we will have no use here for $p$ values other than $p=2$ or
$p=\infty$ and the latter almost always coupled with $k=0$ giving $L^\infty$.
Then $L^2 = H^{0,2}$ and $L^\infty = H^{0,\infty}$ norms will be denoted by
$\|\ldots\|_2$ and $\|\ldots\|_\infty$,
and the $L^2$ inner product by $\langle\cdot,\cdot\rangle$.
Additionally we will use the Hilbert spaces $\dot{H}^s(\Omega)$ defined by
$  \dot{H}^s(\Omega) = \left\{v\in L^2(\Omega): \sum_{j=1}^\infty \lambda_j^{s}|(v,\varphi_j)|^2<\infty\right\}$
with the norm
$  \|v\|_{\dot{H}^s(\Omega)}^2=\sum_{j=1}^\infty \lambda_j^{s}|(v,\varphi_j)|^2$, 
which is equivalent to the $H^s$ Sobolev norm for $s\in[0,2]$ for $a\in H^{1,\infty}(\Omega)$, 
$q\in L^2(\Omega)$, and $\mathbb{L}$ elliptic.

\def\ul#1{\underline{#1}}
\def\lamtil{\tilde{\lambda}}
\def\phitil{\tilde{\varphi}}
\section{Algorithms and their analysis}\label{sect:numer_algor}

In this section we develop the main algorithmic scheme to show
uniqueness for recovery of both $a(x)$ and $q(x)$ from the overposed
data under appropriate conditions.  This scheme is constructive being
iterative in nature with a convergence analysis possible.

\subsection{An iteration scheme to recover $a$ and $q$}\label{subsec:iter}

To recapitulate, we seek to determine both $a(x)$, $q(x)$ in 
\begin{equation}\label{eqn:uv}
\begin{aligned}
&D^\alpha_t u-\nabla\cdot(a\nabla u)+q\,f(u) = r_u \quad t\in(0,T)\,,
\qquad u(0) = u_0\\
&D^\alpha_t v-\nabla\cdot(a\nabla v)+q\,f(v) = r_v \quad t\in(0,T)\,,
 \qquad v(0)=v_0
\end{aligned}
\end{equation}
with prescribed impedance boundary and given initial conditions
\begin{equation}\label{eqn:init_bc}
a\,\partial_\nu u+\gamma u = s(x,t),\quad x\in\partial\Omega
\qquad u(x,0) = u_0,\quad v(x,0) = v_0
\end{equation}
and known forcing functions
$r_u = r_u(x,t,u)$, $r_v = r_v(x,t,v)$ and reaction term $f(u)$
from observations $g_u(x):=u(x,T)$, $g_v:=v(x,T)$.

For a given $a$, $q$ we can evaluate \eqref{eqn:uv} on the surface $t=T$
to obtain
\begin{equation}\label{eqn:uv_sys}
\begin{aligned}
-\nabla\cdot(a\nabla g_u) + q\, f(g_u) &= r_u - D_t^\alpha u(x,T;a,q) \\
-\nabla\cdot(a\nabla g_v) + q\, f(g_v) &= r_v - D_t^\alpha v(x,T;a,q) \\
\end{aligned}
\end{equation}
or 
\begin{equation}\label{eqn:uv_sys2}
\mathbb{M} \left[{a}\atop{q}\right] = \mathbb{F}(a,q)
\end{equation}
where $\mathbb{M}$ is a linear operator depending only the data $g_u$ and $g_v$
and its derivatives and $\mathbb{F}$ is a nonlinear operator on $(a,q)$.

The strategy will be to provide conditions under which $\mathbb{M}$ is invertible
and the combined  nonlinear operator
$\mathbb{T}(a,q) := \mathbb{M}^{-1} \mathbb{F}(a,q)$ is contractive.

To establish an iterative reconstruction scheme, we therefore
let $\mathbb{T}(a,q)=(a^+,q^+)$, where 
\begin{equation}\label{eqn:aplusqplus}
\begin{aligned}
-\nabla\cdot(a^+\nabla g_u)+q^+f(g_u) = r_u(T)-D^\alpha_t u(\cdot,T;a,q)\\
-\nabla\cdot(a^+\nabla g_v)+q^+f(g_v) = r_v(T)-D^\alpha_t v(\cdot,T;a,q)\\
\end{aligned}
\end{equation}
and $u(x,t):= u(x,t;a,q)$, $v(x,t):= v(x,t;a,q)$ solve equation~\eqref{eqn:uv}.

In the case $f(u)=u$ there is an obvious approach to the above; multiply
the first equation in
\eqref{eqn:uv_sys} by $g_v$, the second by $g_u$ and subtract thereby
eliminating $q$ from the left hand side.

This gives
\begin{equation}\label{eqn:a_W}
\nabla\cdot(a\,W) = a\nabla\cdot W + \nabla a\cdot W = \phi
\end{equation}
where
\begin{equation}\label{eqn:W_phi}
\begin{aligned}
W(x) &= g_v(x)\,\nabla g_u(x) - g_u(x)\,\nabla g_v(x)\\
\phi(x) &= g_v(x)\,D^\alpha_t u(x,T)- g_u(x)\,D^\alpha_t v(x,T) + g_v\,r_u(x,T,u(x,T)) - g_u\,r_v(x,T,v(x,T))\,.\\
\end{aligned}
\end{equation}
The value of $W$ on $\partial\Omega$ is known from the boundary conditions
imposed on the system so that \eqref{eqn:a_W} gives an update for $a(x)$ in
terms of $W$ and $\phi$.
This shows that the scheme \eqref{eqn:aplusqplus} can be inverted for $a$
and then by substitution, also for $q$ provided that $W$ does not vanish
in any subset of $\Omega$ with nonzero measure.

If now, for example, we are in one space dimension and
$u$ and $v$ share the same boundary conditions at the left endpoint, then
\begin{equation}\label{eqn:a_uncoupled}
a(x)W(x) = \int_0^x \phi(s)\,ds 
\end{equation}
where we used the fact that $W(0)=0$.
The above will be the basis of one implementation of our reconstruction process
in Section~\oldref{sect:recons}: namely the {\it eliminate $q$ version.}

This also works the other way around so
an alternative is to multiply the first equation by $\nabla g_v$ and the second
by $\nabla g_u$ and subtract giving
\begin{equation}\label{eqn:q_uncoupled}
\begin{aligned}
q(x) W(x) &= a(x)\nabla W(x) + \psi(x) \\
\psi(x) &= (r_u(x,T,u(x,T))- D^\alpha_t u(x,T))\nabla g_v(x) - (r_v(x,T,v(x,T))-D^\alpha_t v(x,T))\nabla g_u(x)\\
\end{aligned}
\end{equation}
where $\psi(x)$ has already been computed from the previous iteration.

There is a seeming symmetry between this uncoupling of $a$ and $q$ but this
first impression could be misleading.
In \eqref{eqn:a_uncoupled} we obtain an updated $a$ directly from previous
iteration values of both $a$ and $q$ and this involves only the function
$W$.
The inversion of $a$ will go smoothly if $W$ does not vanish in $\Omega$
and even zeros of measure zero can be handled as we will see in
Section~\oldref{sect:recons}.
In \eqref{eqn:q_uncoupled} we obtain an updated $q$ that also depends not only on
$W$ but also $\nabla W$.
As we shall see, this makes the uncoupling of $q$ less stable than the
other way around.

However, the important point is that the above shows that the linear operator
$\mathbb{M}$ can be inverted by eliminating either of $q$ or $a$.
Given this,
we could also use \eqref{eqn:uv_sys2} directly by inverting
the linear operator $\mathbb{M}$ and solving simultaneously for $a$ and $q$
after representing these functions in a basis set.
An implementation of this approach will also be shown in
Section~\oldref{sect:recons}
and in general turns out to be the most effective approach, more clearly
avoiding some of the difficulties noted above
by using a least squares setting.

\subsection{Contractivity in one space dimension: eigenfunction expansion}\label{isl_contractivity}

It is well known that the full Sturm-Liouville form
\begin{equation}\label{eqn:full_SL}
-(a(x)u_x)_x + q(x)u = \lambda r(x) u 
\qquad 0<x<1
\end{equation}
can be placed into canonical Schr\"odinger  form
\begin{equation}\label{eqn:canonical_SL}
-v_{yy} + Q(y)v = \mu v 
\qquad 0 < y<L
\end{equation}
where the form of the boundary conditions is preserved and the 
equivalent $Q$ is given by the classical Liouville transformation,
achieved by setting 
$Q(x) = \frac{f''}{f}(x) + L^2 q(x)$ where
$f(x) = [a]^{1/4}$, $\,L =\int^1_0 [a(s)]^{-1/2}\,ds$
and $\mu = L^2\lambda$,
see \cite{CCPR1997}.
However, we do not need such regularity assumptions and
the following version due to Everitt can be found in \cite{Everitt:2005}.

\begin{lemma}\label{lem:Liouville_transform}
Let $a$, $r$ be such that $a$, $a'$, $r$, $r'$ be absolutely continuous on
$(0,1)$ with $a(x)$ and $r(x)$ strictly positive.
Then the mapping
$$
\ell(x) = \int_0^x [ r(s)/a(s)]^{1/2}\,ds
$$
taking $(0,1)$ into $(0,L)$
has an inverse mapping $\ell^{-1}$ where $L = \int_0^1 [ r(s)/a(s)]^{1/2}\,ds$.
Then the Sturm-Liouville equation \eqref{eqn:full_SL} can be transformed
into \eqref{eqn:canonical_SL} by the Liouville transformation
$\,y \to \ell(x)$, $\;v(y) \to v(\ell(x)) = [a(x)r(x)]^{1/4}u(x)$
where
$$
Q(y) = [r(x)]^{-1}q(x) - [a(x)/r^3(x)]^{1/4}
\bigl[a(x)\bigl([a(x)r(x)]^{-1/4}\bigr)'\bigr]'
$$
\end{lemma}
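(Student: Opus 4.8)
The plan is to verify the transformation by direct substitution, following the classical Liouville change of variables but keeping track of the weight $r(x)$. First I would set $y = \ell(x)$ with $\ell'(x) = [r(x)/a(x)]^{1/2} > 0$, so that $\ell$ is strictly increasing and, by the absolute continuity of $a$, $r$ together with strict positivity, the inverse $\ell^{-1}:(0,L)\to(0,1)$ exists and is absolutely continuous. The substitution $v(y) = \rho(x)\,u(x)$ with $\rho(x) := [a(x)r(x)]^{1/4}$ is the natural one: the quarter-power is exactly what makes the first-derivative term disappear, as in the standard Schr\"odinger reduction. I would then compute $u = \rho^{-1} v$, express $u_x$ and $(a u_x)_x$ in terms of $v$ and its $y$-derivatives via the chain rule $\frac{d}{dx} = \ell'(x)\frac{d}{dy}$, and substitute into \eqref{eqn:full_SL}.

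The key steps, in order, are: (i) compute $a u_x = a\rho^{-1} v_x - a\rho^{-2}\rho_x v$, where here $v_x$ means $\ell'(x)\,v_y$; (ii) differentiate again in $x$ and collect the coefficient of $v_y$, showing it vanishes precisely because of the choice $\rho = [ar]^{1/4}$ — this is where one uses $a\ell' = [ar]^{1/2} = \rho^2$, so that $\frac{d}{dx}(a\ell'\rho^{-1}) = \frac{d}{dx}(\rho) = \rho_x$, which must cancel against the cross term from part (i); (iii) identify the coefficient of $v_{yy}$ as $-a(\ell')^2\rho^{-1} = -[ar]\rho^{-1} = -\rho$, so after dividing through by the weight the leading term becomes $-v_{yy}$; (iv) collect the remaining coefficient of $v$, which after division by $r(x)$ yields $\mu = \lambda$ on the right and $Q(y)$ on the left. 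For the explicit form of $Q$, I would note that the potential-type term splits as $[r(x)]^{-1}q(x)$ coming from the $q u$ term, minus the curvature term generated by differentiating $\rho^{-1}$ twice; writing the latter compactly as $[a/r^3]^{1/4}\bigl[a\bigl([ar]^{-1/4}\bigr)'\bigr]'$ is just bookkeeping once one observes that $\rho^{-1} = [ar]^{-1/4}$ and that the overall prefactor after dividing by $r$ and by $\rho$ combines to $[a/r^3]^{1/4}$.

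Finally I would check that the boundary conditions are preserved in form: at $x=0$ and $x=1$, a condition $\alpha u + \beta a u_x = 0$ becomes, after substitution, a condition of the same impedance type in $v$ and $v_y$ at $y=0,L$, since the transformation is linear and the Jacobian factors $\rho$, $\ell'$ are finite and positive at the endpoints under the stated hypotheses; Dirichlet maps to Dirichlet and Neumann/impedance maps to Neumann/impedance. The main obstacle is not conceptual but clerical: steps (ii) and (iv) involve second derivatives of the quarter-powers $[ar]^{\pm 1/4}$, and one must be careful that only $a,a',r,r'$ absolute continuity (not $C^2$) is used — so the term $\bigl[a\bigl([ar]^{-1/4}\bigr)'\bigr]'$ should be read as the distributional/a.e. derivative, and the cancellation in (ii) must be justified at the level of absolutely continuous functions rather than pointwise. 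Keeping the regularity minimal while still getting a clean formula for $Q$ is the delicate point, and is exactly why the statement is attributed to Everitt's refinement of the classical result rather than the $C^2$ version recorded just above.
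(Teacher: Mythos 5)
Your proposal is correct, but note that the paper itself offers no proof of this lemma at all: it is stated as a known result and attributed to Everitt \cite{Everitt:2005}, so there is nothing in the paper to compare your argument against except the citation. Your direct-substitution verification is the standard one and the computations check out: with $\rho=(ar)^{1/4}$ one has $a\ell'=\rho^2$, so $au_x = a(\rho^{-1})'v+\rho v_y$, and the coefficient of $v_y$ in $(au_x)_x$ is $a(\rho^{-1})'\ell'+\rho' = -\rho'+\rho'=0$, which is your step (ii); the coefficient of $v_{yy}$ is $\rho\ell'=(r^3/a)^{1/4}$, and dividing the equation by this factor and using $\rho^{-1}(a/r^3)^{1/4}=r^{-1}$ gives exactly $Q(y)=r^{-1}q-(a/r^3)^{1/4}\bigl[a\bigl((ar)^{-1/4}\bigr)'\bigr]'$ with $\mu=\lambda$ on the interval $(0,L)$. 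Your remark about reading the outer derivative distributionally is the right way to see why only $a,a',r,r'\in AC$ is needed (this gives $Q\in L^1_{\mathrm{loc}}$), and is indeed the point of using Everitt's formulation rather than the $C^2$ version. The only imprecision is in your boundary-condition check: since $au_x=a(\rho^{-1})'v+\rho v_y$, a pure Neumann condition on $u$ generally becomes a Robin (impedance) condition on $v$ unless $(\rho^{-1})'$ vanishes at the endpoint; what is preserved is the dichotomy Dirichlet versus non-Dirichlet, which is all the paper uses. This does not affect the lemma as stated.
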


\begin{remark}\label{rem:r_not_1}
We will of course take $r(x)=1$ as it is not involved in our current problem
and this makes the full transformation simpler.
We note that the coefficients $a$ and $r$ play a complementary role and
the more general case can be used if indeed our inverse problem was to
determine the specific heat $r(x)$ and the potential $q(x)$
in the parabolic equation $r(x)u_t - u_{xx} + q(x)u$.
\end{remark}

\begin{remark}\label{rem:isl_weaker}
One can extend the spaces involved even further by accepting potentials
$Q(x)$ in the distribution space $H^{-1}(0,1)$ and still retain the
essential features required of the eigenvalues/eigenfunctions,
see \cite{HrynivMykytyuk:2003}.
This in turn allows weaker assumptions to be placed on the coefficient $a$:
for example, to lie in $H^1(0,1)$.
See, \cite{RundellSacks:1992b}.
\end{remark}

Now let $Q_1$ and $Q_2$ be in $L^2[0,1]$ 
and let
$\{\hat{\phi}_n(x),\lambda_n\}_1^\infty$ and $\{\hat{\psi}_n(x),\mu_n\}_1^\infty$ 
denote the corresponding eigenfunction/eigenvalue pairs:
\begin{equation}\label{eqn:eigenfunc}
-\hat{\phi}''_n + Q_1(x)\hat{\phi}_n = \lambda_n\hat{\phi}_n,
\qquad
-\hat{\psi}''_n + Q_2(x)\hat{\psi}_n = \mu_n\hat{\psi}_n,
\end{equation}
where we choose the normalizations $u'(0) = 1$ if the left boundary condition
is Dirichlet and $u(0)=1$ in the case of an impedance boundary condition
with a finite impedance value.

\begin{lemma}\label{lem:eigenvalue_lipschitz}
For some constant $C = C(M_Q)$
\begin{equation}\label{eqn:eigenvalue_lip}
|\lambda_n - \mu_n| \leq C\|Q_1 - Q_2\|_2
\end{equation}
\end{lemma}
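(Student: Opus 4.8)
The plan is to use the standard min-max (Courant--Fischer) characterization of the eigenvalues of the Schr\"odinger operators together with the fact that the two operators differ only by the bounded multiplication operator $Q_1-Q_2$. Write $A_i v = -v'' + Q_i v$ on $[0,1]$ with the common boundary conditions from \eqref{eqn:eigenfunc}, with ordered eigenvalues $\lambda_1\le\lambda_2\le\cdots$ for $A_1$ and $\mu_1\le\mu_2\le\cdots$ for $A_2$. Both operators share the same form domain $\mathcal{D}$ (an $H^1$-type space fixed by the boundary conditions), and their quadratic forms satisfy
\begin{equation}\label{eqn:form_diff}
\mathfrak{a}_1[v] - \mathfrak{a}_2[v] = \int_0^1 (Q_1-Q_2)\,|v|^2\,dx
\qquad\text{for all } v\in\mathcal{D}.
\end{equation}

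First I would record the min-max formula $\lambda_n = \min_{S_n}\max_{0\ne v\in S_n}\mathfrak{a}_1[v]/\|v\|_2^2$, the minimum over $n$-dimensional subspaces $S_n\subset\mathcal{D}$, and likewise for $\mu_n$ with $\mathfrak{a}_2$. Fix the optimal subspace $S_n^\star$ for $\mu_n$; then
\begin{equation}\label{eqn:minmax_step}
\lambda_n \le \max_{0\ne v\in S_n^\star}\frac{\mathfrak{a}_1[v]}{\|v\|_2^2}
\le \max_{0\ne v\in S_n^\star}\frac{\mathfrak{a}_2[v] + \int_0^1(Q_1-Q_2)|v|^2}{\|v\|_2^2}
\le \mu_n + \sup_{0\ne v\in\mathcal{D}}\frac{\int_0^1(Q_1-Q_2)|v|^2}{\|v\|_2^2}.
\end{equation}
The key estimate is then to bound the last supremum by $\|Q_1-Q_2\|_2$. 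The naive bound $\int (Q_1-Q_2)|v|^2 \le \|Q_1-Q_2\|_\infty\|v\|_2^2$ is the wrong norm; instead I would estimate $\int|Q_1-Q_2||v|^2 \le \|Q_1-Q_2\|_2\,\|v\|_4^2$ by Cauchy--Schwarz and then invoke the one-dimensional Gagliardo--Nirenberg / Sobolev embedding $\|v\|_4^2 \le C(\|v'\|_2\|v\|_2 + \|v\|_2^2)$ together with the form-coercivity $\|v'\|_2^2 \le C(\mathfrak{a}_2[v] + \|v\|_2^2)$. Combined with $\mathfrak{a}_2[v] \le \mu_n\|v\|_2^2$ on $S_n^\star$, this makes the perturbation term bounded by $C(M_Q)(1+\mu_n^{1/2})\|Q_1-Q_2\|_2\|v\|_2^2$, which yields a Lipschitz bound but with a constant growing in $n$. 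To get the clean $n$-independent constant stated in \eqref{eqn:eigenvalue_lip} I would instead absorb the $\|v'\|_2$ factor by Young's inequality: $\|Q_1-Q_2\|_2\|v'\|_2\|v\|_2 \le \varepsilon\|v'\|_2^2 + \varepsilon^{-1}\|Q_1-Q_2\|_2^2\|v\|_2^2$, move the $\varepsilon\|v'\|_2^2$ term to the left using coercivity (valid once $\|Q_1-Q_2\|_2$ is small, say $\le 1$; the large-difference case is trivial since eigenvalues depend continuously and one can chain), and conclude $\lambda_n - \mu_n \le C(M_Q)\|Q_1-Q_2\|_2$. By symmetry (swapping the roles of $Q_1,Q_2$) one gets the reverse inequality, hence $|\lambda_n-\mu_n|\le C(M_Q)\|Q_1-Q_2\|_2$.

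The main obstacle is precisely obtaining an $n$-\emph{independent} Lipschitz constant: the crude form-perturbation argument gives a constant that degrades as $n\to\infty$ because $\mathfrak{a}_2[v]$ can be as large as $\mu_n$ on the competing subspace. Resolving this requires the Young-inequality absorption trick above, which trades the uncontrolled $\mathfrak{a}_2[v]$ growth for a term proportional to $\|v\|_2^2$ at the cost of a smallness assumption on $\|Q_1-Q_2\|_2$ — and that smallness restriction is harmless because for $\|Q_1-Q_2\|_2$ bounded below the inequality holds trivially by taking $C(M_Q)$ large enough (both sets of eigenvalues lie in a band determined by $M_Q$). An alternative, if one wants to avoid the case split, is to use the resolvent identity $A_1^{-1}-A_2^{-1} = A_1^{-1}(Q_2-Q_1)A_2^{-1}$ together with the fact that $A_i^{-1}$ maps $L^2\to H^2\hookrightarrow L^\infty$ boundedly in one dimension, giving $\|A_1^{-1}-A_2^{-1}\|_{L^2\to L^2}\le C\|Q_1-Q_2\|_2$, and then applying the standard Lipschitz stability of eigenvalues of the compact self-adjoint resolvents; but the min-max route is more elementary and self-contained.
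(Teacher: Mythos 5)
Your min--max route has a genuine gap at exactly the point you flag: the Young-inequality absorption does not remove the $n$-dependence of the constant. After Cauchy--Schwarz and Gagliardo--Nirenberg you are left with a term $\varepsilon\|v'\|_2^2$, but there is nothing on the left-hand side of your min--max chain to absorb it into; the only available control on the optimal subspace $S_n^\star$ is $\|v'\|_2^2\leq C(\mathfrak{a}_2[v]+\|v\|_2^2)\leq C(\mu_n+1)\|v\|_2^2$, so you end up with $\lambda_n-\mu_n\leq C\varepsilon\mu_n+C\varepsilon^{-1}\|Q_1-Q_2\|_2^2+C\|Q_1-Q_2\|_2$, and optimizing in $\varepsilon$ returns a constant of order $\mu_n^{1/2}$. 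This is not an artifact of crude estimation: with $Q_2=0$ and Dirichlet conditions, the normalized trial function $v=n^{-1/2}\sum_{k\leq n}\sqrt{2}\sin(k\pi x)$ lies in the span of the first $n$ eigenfunctions and satisfies $|v|\gtrsim\sqrt{n}$ on an interval of length $\sim 1/n$, so for $V=Q_1-Q_2$ concentrated there one gets $\int_0^1 V v^2\,dx\sim\sqrt{n}\,\|V\|_2\|v\|_2^2$. Hence the supremum $\sup_{v\in S_n^\star}\int V|v|^2/\|v\|_2^2$ genuinely grows like $\sqrt{\mu_n}\,\|V\|_2$ and no choice of $\varepsilon$ can rescue the argument. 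The resolvent alternative you sketch fares worse: $\|A_1^{-1}-A_2^{-1}\|_{L^2\to L^2}\leq C\|Q_1-Q_2\|_2$ only yields $|\lambda_n^{-1}-\mu_n^{-1}|\leq C\|Q_1-Q_2\|_2$, i.e.\ a Lipschitz constant of order $\lambda_n\mu_n$.

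The uniform constant is a genuinely one-dimensional Sturm--Liouville fact and requires information about the actual eigenfunctions rather than arbitrary trial functions in a competing subspace. Two standard repairs: (i) first-order perturbation along the segment $Q_t=Q_2+t(Q_1-Q_2)$, using $\frac{d}{dt}\lambda_n(Q_t)=\int_0^1(Q_1-Q_2)\psi_{n,t}^2\,dx\leq\|Q_1-Q_2\|_2\,\|\psi_{n,t}\|_\infty\|\psi_{n,t}\|_2$ together with the uniform bound $\|\psi_{n,t}\|_\infty\leq C(M_Q)$ for the $L^2$-normalized eigenfunctions of a 1D Schr\"odinger operator (individual eigenfunctions do not concentrate, unlike their linear combinations); or (ii) the route the paper takes, namely the classical asymptotic expansion $\lambda_n=n^2\pi^2+\int_0^1 Q\,dt-\int_0^1 Q\cos 2n\pi t\,dt+t_n$ with remainder depending on $Q$ in an $\ell^2$-Lipschitz manner, from which the difference of the two spectra is controlled termwise by $C\|Q_1-Q_2\|_2$.
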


\begin{proof}
We have the asymptotic expansion \cite{CCPR1997}
$$
\lambda_n = n^2\pi^2 + \int^1_0\!Q(t)\,dt - \int^1_0\!Q(t)\cos2n\pi t\,dt
+ t_n,\qquad \{t_n\}\in \ell^2
$$
in the case of Dirichlet conditions where each subsequent $k^{th}$
term in the expansion is the inner product of $Q$ with $\cos 2k\pi x$
and the decay rate of each term is as indicated.
The estimate \eqref{eqn:eigenvalue_lip} now easily follows

In fact this above asymptotic rate holds for $Q$ only
in $L^2(0,1)$ but the term $t_n$ is replaced by $\eta_n\in\ell^2$.
In the case of non-Dirichlet conditions the cosine terms are replaced by sines.
\end{proof}

Suppose that $P(x)$ and $Q(x)$ are two potentials in $L^2(0,1)$
with the same spectrum $\{\lambda_n\}$: that is,
$-u'' + Q u = \lambda u$ and $-v'' + P v = \lambda v$ where $u$ and $v$ share
the same conditions at $x=0$.
We will take these to be Dirichlet and impose the normalization that
$u'(0)=v'(0) = 1$.
Then it follows from the well-known Gel'fand-Levitan formulation that
\begin{equation}\label{eqn:GL}
v(x) = u(x) + \int_0^x K(x,t)u(t)\,dt
\end{equation}
where
\begin{equation}\label{eqn:K_hyperbolic}
\begin{aligned}
K_{tt}-K_{xx}+\bigl(Q(x)-P(x)\bigr)K &= 0, \quad 0\leq t\leq x \leq 1 \\
K(x,\pm x) &= \pm\half \int_0^x \bigl(Q(s) - P(s)\bigr)\,ds 
\end{aligned}
\end{equation}
In the case of impedance boundary conditions at $x=0$,
$u'(0) -h u(0) = 0$  we would normalize by
$u(0s)=1$ and (3) above would be replaced by
$$
K(x,\pm x) = h + \half \int_0^x \bigl(Q(s) - P(s)\bigr)\,ds 
$$
The key point here is that $K(x,t)$ does not depend on $\lambda$,
 only on $P$ and $Q$.
It satisfies a hyperbolic partial differential equation
with characteristics given by the lines $x=\pm t$.
The boundary conditions in the above form a {\it Goursat problem} for the
hyperbolic equation.

Suppose now that $P(x)=0$ and we have \eqref{eqn:eigenfunc}.
%$-\phi_n'' + q_1 \phi_n = \lambda_n \phi_n$ and
%$-\psi_n'' + q_2 \psi_n = \mu_n \psi_n$.
For definiteness we assume Dirichlet conditions at $x=0$.

Then
$$
\hat{\phi}_n(x) = {{\sin(\sqrt{\lambda_n}x)}\over {\sqrt{\lambda_n}}} +
\int_0^x K(x,t){{\sin(\sqrt{\lambda_n}t)}\over {\sqrt{\lambda_n}}}\,dt.
$$
and the solution to \eqref{eqn:K_hyperbolic} can be written as
\begin{equation}\label{eqn:K_represent}
K(x,t) + \int_R Q(r) K(r,s)\,dr\,ds
= \int_{\frac{x-t}{2}}^{\frac{x+t}{2}} Q(s)\,ds
\end{equation}
where $R$ is the rectangular region with corners $(x,t)$,
$(\frac{x-t}{2},\frac{x-t}{2})$,
$(\frac{x+t}{2},\frac{x+t}{2})$,
and $(0,0)$,
From the Volterra equation \eqref{eqn:K_represent}
it follows that $K$ depends on $Q$ through
a Lipschitz bound $\|K(\cdot;Q_1) - K(\cdot;Q_2)\|_\infty \leq C \|Q_1-Q_2\|_2$.
Indeed, if $Q>0$ then we have immediately from the above that
$\|K(\cdot;Q_1) - K(\cdot;Q_2)\|_\infty \leq \|Q_1-Q_2\|_2$.

Putting the above together 
and noting the correspondence 
$\phi_n=\sqrt{2\lambda_n}\hat{\phi}_n$,
we have the lemma
\begin{lemma}\label{lem:eigenfunction_lipschitz}
Given the above, the eigenfunctions corresponding to $Q_1$ and $Q_2$
must satisfy
\begin{equation}\label{eqn:efunc_lipschitz}
\begin{aligned}
\|\phi_n - \psi_n\|_\infty &\leq \sup_{0\leq x\leq 1}|\sin(\sqrt{\lambda_n}x) - \sin(\sqrt{\mu_n}x)|
+ \sup_{0\leq t\leq x}| K(x,t;Q_1) - K(x,t;Q_2)|\\
&\leq C\|Q_1 - Q_2\|_2
\end{aligned}
\end{equation}
\end{lemma}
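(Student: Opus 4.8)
The plan is to bound $\|\phi_n - \psi_n\|_\infty$ by splitting the difference into the explicit "sine part" and the "kernel part" exactly as displayed in the statement, and then estimating each piece separately using the two lemmas already proved. First I would write, via the Gel'fand--Levitan representation \eqref{eqn:GL} applied to each of $Q_1$, $Q_2$ (against the free solution $P=0$),
\begin{equation*}
\hat{\phi}_n(x)-\hat{\psi}_n(x) = \Bigl(\tfrac{\sin(\sqrt{\lambda_n}x)}{\sqrt{\lambda_n}}-\tfrac{\sin(\sqrt{\mu_n}x)}{\sqrt{\mu_n}}\Bigr) + \int_0^x\!\Bigl(K(x,t;Q_1)\tfrac{\sin(\sqrt{\lambda_n}t)}{\sqrt{\lambda_n}}-K(x,t;Q_2)\tfrac{\sin(\sqrt{\mu_n}t)}{\sqrt{\mu_n}}\Bigr)dt,
\end{equation*}
then rescale by the normalizing factor $\sqrt{2\lambda_n}$ (using $\phi_n=\sqrt{2\lambda_n}\hat\phi_n$ and the analogous relation for $\psi_n$, modulo the harmless $\sqrt{\lambda_n}$ versus $\sqrt{\mu_n}$ discrepancy, which is itself controlled by Lemma~\ref{lem:eigenvalue_lipschitz}). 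The upshot is the first inequality in \eqref{eqn:efunc_lipschitz}: a sine term plus a kernel term.

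For the sine term I would use the elementary bound $|\sin a - \sin b|\le |a-b|$ together with $|\sqrt{\lambda_n}x - \sqrt{\mu_n}x| \le |\sqrt{\lambda_n}-\sqrt{\mu_n}| = |\lambda_n-\mu_n|/(\sqrt{\lambda_n}+\sqrt{\mu_n})$, and then invoke Lemma~\ref{lem:eigenvalue_lipschitz} to get $|\lambda_n-\mu_n|\le C\|Q_1-Q_2\|_2$. Since $\lambda_n,\mu_n\to\infty$ the denominator only helps; for the finitely many small $n$ one absorbs everything into the constant. For the kernel term I would add and subtract $K(x,t;Q_2)\tfrac{\sin(\sqrt{\lambda_n}t)}{\sqrt{\lambda_n}}$ to separate the dependence on the kernel from the dependence on the sine: the first resulting piece is bounded by $\|K(\cdot;Q_1)-K(\cdot;Q_2)\|_\infty$ times a uniformly bounded factor (note $|\sin(\sqrt{\lambda_n}t)/\sqrt{\lambda_n}|\le 1$ after the rescaling is tracked carefully), and the second piece is again a sine difference, handled as above. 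The Lipschitz bound $\|K(\cdot;Q_1)-K(\cdot;Q_2)\|_\infty\le C\|Q_1-Q_2\|_2$ coming from the Volterra equation \eqref{eqn:K_represent} then closes the estimate, giving the second inequality in \eqref{eqn:efunc_lipschitz}.

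The main obstacle is bookkeeping of the normalization and of the $\sqrt{\lambda_n}$ factors: the representation naturally produces $\hat\phi_n$ normalized by $\hat\phi_n'(0)=1$, whereas the lemma is stated for $\phi_n$ with the $\sqrt{2\lambda_n}$ scaling, and the two eigenfunctions being compared carry \emph{different} spectral parameters $\lambda_n\ne\mu_n$. One must check that rescaling $\hat\phi_n$ by $\sqrt{2\lambda_n}$ and $\hat\psi_n$ by $\sqrt{2\mu_n}$ does not spoil the Lipschitz dependence — i.e. that the mismatch $\sqrt{2\lambda_n}-\sqrt{2\mu_n}$ is itself $O(\|Q_1-Q_2\|_2)$ uniformly in $n$, which again follows from Lemma~\ref{lem:eigenvalue_lipschitz} since $|\sqrt{\lambda_n}-\sqrt{\mu_n}|\le|\lambda_n-\mu_n|/(\sqrt{\lambda_n}+\sqrt{\mu_n})$ and the denominator is bounded below. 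A secondary point worth a remark is uniformity of the constant $C$ in $n$: all the bounds above are uniform once one notes the free solutions $\sin(\sqrt{\lambda_n}t)/\sqrt{\lambda_n}$ are uniformly bounded on $[0,1]$ and the Volterra estimate for $K$ is $n$-independent. With those two caveats dispatched, the rest is the routine triangle-inequality argument sketched above.
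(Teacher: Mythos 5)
Your proposal follows essentially the same route as the paper, which proves the lemma simply by ``putting together'' the Gel'fand--Levitan representation with $P=0$, the eigenvalue bound of Lemma~\ref{lem:eigenvalue_lipschitz} for the sine difference, and the Volterra estimate \eqref{eqn:K_represent} for the kernel difference, followed by the rescaling $\phi_n=\sqrt{2\lambda_n}\hat{\phi}_n$. Your additional bookkeeping of the $\sqrt{\lambda_n}$ versus $\sqrt{\mu_n}$ normalization and of uniformity in $n$ is a correct and welcome elaboration of details the paper leaves implicit.
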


The point of all of this is that the given the regularity assumptions on
functions
$a_1(x)$, $a_2(x)$, $q_1(x)$, $q_2(x)$, the transformed functions
$Q_1$ and $Q_2$ must satisfy
\begin{equation}\label{eqn:Q_Lipschitz}
\|Q_1 - Q_2\|_2
\leq C\bigl[ \|a_1 - a_2\|_{C^{1,\beta}} + \|q_1 - q_2\|_2\bigr]
\end{equation}
for any $\beta>0$, and we can therefore apply the above lemmas to see
that without loss of generality
we may assume that both the original  eigenfunctions and eigenvalues
must depend on a Lipschitz manner on $a_1-a_2$ and $q_1-q_2$.
\bigskip
Now returning to the solution representation \eqref{eqn:eigen_rep} of $u(x,t)$
$$
u(x,t) = \sum_{n=1}^\infty \langle u_0,\phi_n\rangle
E_{\alpha,1}(-\lambda_n t^\alpha)\,\phi_n(x)
$$
we see that for $t>0$ and the parabolic case 
\begin{equation}\label{eqn:u_t_rep_1}
u_t(x,t) = \sum_{n=1}^\infty b_n e^{-\lambda_n t}\,\phi_n(x),
\qquad b_n = -\lambda_n \langle u_0,\phi_n\rangle \,.
\end{equation}
This converges uniformly for $t\geq t_0>0$ and shows $u_t(x,t)$ for fixed $t$
is Lipschitz continuous in both the eigenfunctions $\{\phi_n\}$
and eigenvalues $\{\lambda_n\}$ and hence in light of the above
also in the functions $q$ and $a$ in the stated norms.
In addition, the exponential decay of the term $e^{-\lambda_n T}$ shows that
for $T$ sufficiently large, this Lipschitz constant will be less than unity.

In the case of the fractional time operator we need a modification.
There is no longer exponential decay of the solution as the function
$E_{\alpha,1}(-\lambda T^\alpha)$ has only linear decay in time and is a
well-known difference between the classical and fractional cases
especially in regards to inverse problems,
\cite{JinRundell:2015,KaltenbacherRundell:2019a}.

Now taking the time derivative of the solution means that the sequence
$\{b_n\}$ defined in \eqref{eqn:u_t_rep_1} must be assumed to be
at least in $L^2(\Omega)$ and in consequence we must add the additional
regularity assumption that the initial data $u_0\in H^2(\Omega)$.
This follows directly from the fact that multiplying by $\lambda$ in the
Fourier coefficients is equivalent to taking two derivatives in space.

We now formally show these statements below.

The difference $\mathbb{F}(a_1,q_1)-\mathbb{F}(a_2,q_2)$ can be decomposed
as follows
\[
\begin{aligned}
&u_t(x,T;Q_1) - u_t(x,T;Q_2)
= - \sum_{n=1}^\infty 
\Bigl(\lambda_n e^{-\lambda_n T} \langle u_0,\phi_n\rangle \phi_n(x) -
\mu_n e^{-\mu_n T} \langle u_0,\psi_n\rangle \psi_n(x)\Bigr)\\
&= - \sum_{n=1}^\infty \left\{
\left(\lambda_n e^{-\lambda_n T}-\mu_n e^{-\mu_n T}\right) \langle u_0,\phi_n\rangle
+\mu_n e^{-\mu_n T} \langle u_0,\phi_n- \psi_n\rangle\right\}
\phi_n(x)\\
&\quad-\sum_{n=1}^\infty \mu_n e^{-\mu_n T} \langle u_0,\psi_n\rangle \left(\phi_n(x) - \psi_n(x)\right)\,.
\end{aligned}
\]
This allows an estimate, for example, of the supremum norm of the difference by 
\[
\begin{aligned}
\|u_t&(x,T;Q_1)- u_t(x,T;Q_2)\|_{\Linf}\\
&\leq C_{\dot{H}^\sigma\to C}^\Omega 
\, \sup_{n\in\mathbb{N}} \lambda_n^{\sigma/2}\left|e^{-\lambda_n T}-\tfrac{\mu_n}{\lambda_n}e^{-\mu_n T}\right| \|u_0\|_{\dot{H}^2(\Omega)}\\
&\quad +C_{\dot{H}^\sigma\to C}^\Omega \Bigl(\sum_{n=1}^\infty \lambda_n^\sigma \left(e^{-\mu_n T}\right)^2
+\sum_{n=1}^\infty \lambda_n^{\sigma-2} \left(\mu_n e^{-\mu_n T}\right)^2\Bigr)^{\frac{1}{2}} \|u_0\|_{\dot{H}^2(\Omega)}\,
\ \sup_{n\in\mathbb{N}} \|\phi_n- \psi_n\|_{\Ltwo}\\
&\quad + C_{\dot{H}^\sigma\to C}^\Omega C_{\dot{H}^2\to L^\infty}^\Omega 
\Bigl(\sum_{n=1}^\infty \lambda_n^\sigma \left(e^{-\mu_n T}\right)^2\Bigr)^{\frac{1}{2}} \|u_0\|_{\dot{H}^2(\Omega)} \|Q_1-Q_2\|_{\Ltwo}\\
&\quad+\sum_{n=1}^\infty  \mu_n e^{-\mu_n T} |\langle u_0,\psi_n\rangle| 
\ \sup_{n\in\mathbb{N}} \|\phi_n- \psi_n\|_{\Linf}
\end{aligned}
\]
for $\sigma>1/2$, where we have used the fact that 
\[
\|\sum_{n=1}^\infty \mu_n e^{-\mu_n T} \langle u_0,\phi_n- \psi_n\rangle
\phi_n\|_{\dot{H}^\sigma(\Omega)}=
\Bigl(\sum_{n=1}^\infty \lambda_n^\sigma \Bigr(\mu_n e^{-\mu_n T}\Bigr)^2 \langle u_0,\phi_n- \psi_n\rangle^2 \Bigr)^{\frac{1}{2}}
\]
with
\[
\langle u_0,\phi_n\!-\! \psi_n\rangle^2 = \Bigl(\sum_{j=1}^\infty \lambda_j\langle u_0,\phi_j\rangle \frac{1}{\lambda_j}\langle \phi_j,\phi_n\!-\! \psi_n\rangle \Bigr)^2
\leq \|u_0\|_{\dot{H}^2(\Omega)}^2 \ \sum_{j=1}^\infty \frac{1}{\lambda_j^2}\langle \phi_j,\phi_n\!-\! \psi_n\rangle^2 \]
and, using $\langle \phi_j,\phi_n\rangle=0$ for $j>n$, 
\[
\begin{aligned}
\sum_{j=1}^{n-1} \frac{1}{\lambda_j^2} \langle \phi_j,\phi_n\!-\! \psi_n\rangle^2 
&=-\sum_{j=1}^{n-1} \frac{1}{\lambda_j^2}\langle \phi_j,\psi_n\rangle^2 
=\frac{-1}{\mu_n^2} \sum_{j=1}^{n-1} \langle (-\triangle +Q_1)^{-1}\phi_j, (-\triangle +Q_2)\psi_n\rangle^2 \\
&=\frac{1}{\mu_n^2} \sum_{j=1}^{n-1} \langle \phi_j, \phi_n-\psi_n + (-\triangle +Q_1)^{-1}[(Q_1-Q_2)\psi_n]\rangle^2 \\
&\leq \frac{1}{\mu_n^2} \Bigl(\|\phi_n-\psi_n\|_{\Ltwo}+C_{\dot{H}^2\to L^\infty}^\Omega \|Q_1-Q_2\|_{\Ltwo}\Bigr)^2\\[1ex]
\sum_{j=n}^\infty \frac{1}{\lambda_j^2}\langle \phi_j,\phi_n- \psi_n\rangle^2
&\leq \frac{1}{\lambda_n^2}\|\phi_n- \psi_n \|_{\Ltwo}^2
\end{aligned}
\]

Higher norms, which  are needed to recover $a$ in $C^{1,\beta}(\Omega)$ can be
estimated by using continuity of the embedding
$\dot{H}^\sigma(\Omega)\to C^{1,\beta}(\Omega)$.
In spite of the arising additional powers of $\lambda_n$,
the exponentially decaying factors will always dominate,
 even if the $C^k(\Omega)$ norm with arbitrary $k\in\mathbb{N}$ is taken. 
This is not the case for the Mittag-Leffler function, where due to the
estimate $\frac{1}{1+\Gamma(1-\alpha)x}\leq E_{\alpha,1}(-x)$ for all $x>0$,
the attainable smoothness is limited to $C^{1,\beta}$ with $\beta<1/2$.

\subsection{Contractivity from the pde directly}

We will reproduce the contractivity estimates from the previous section
but without direct reference to the eigenfunction expansion
\eqref{eqn:u_t_rep_1} since in higher space dimensions the relationship
between the coefficients and the eigenfunctions/eigenvalues is much less
clear. 
Instead we will use the differential equations directly for the differences
of both $a_1$, $a_2$ and $q_1$, $q_2$.
This leads to a non-homogeneous parabolic/subdiffusion system where the
right hand side depends on the coefficients themselves.
Caution is needed at several places as even in the parabolic case there are
difficulties.  For example, in a strong solution interpolation there are
regularity concerns.
For a right hand side function
$F\in C^{k,\beta}(\Omega)\times C^{r,\beta}(0,T)$ where $F$ contains the values
of $a$ and $q$, the corresponding solutions $u_t$ will lie in 
$C^{k,\beta}(\Omega)\times C^{r,\beta/2}(0,T)$ and this regularity drop
causes difficulties with the mapping properties of $\mathbb{T}$.
On the other hand, weak solutions in Sobolev spaces with, say,
$q\in L^2(\Omega)$ and $u\in H^1(\Omega)$ have the issue that 
$q\,u$ is undefined in $\mathbb{R}^d$ with $d>3$ and other embedding estimates
needed often make further restrictions unless $d=1$.
This is exactly the situation even in the case of an unknown potential
as shown in \cite{KaltenbacherRundell:2019b} and clearly is more complex
with a conductivity $a$ involved.

For two different coefficient pairs $(a,q)$, $(\tilde{a},\tilde{q})$ (with corresponding solutions $u$, $v$, $\tilde{u}:= u(x,t;\tilde{a},\tilde{q})$, $\tilde{v}:= v(x,t;\tilde{a},\tilde{q})$), the difference $(da^+,dq^+)=\mathbb{T}(a,q)-\mathbb{T}(\tilde{a},\tilde{q})$ satisfies
\begin{equation}\label{eqn:dadq}
\begin{aligned}
-\nabla(da^+\nabla g_u)+dq^+\,g_u = -D^\alpha_t u(T)+D^\alpha_t \tilde{u}(T):=-D^\alpha_t \hat{u}(T)\\
-\nabla(da^+\nabla g_v)+dq^+\,g_v = -D^\alpha_t v(T)+D^\alpha_t \tilde{v}(T):=-D^\alpha_t \hat{v}(T)
\end{aligned}
\end{equation}
where $\hat{u}$, $\hat{v}$ solve 
\begin{equation}\label{eqn:uvhat}
\begin{aligned}
&D^\alpha_t \hat{u}-\nabla(a\nabla \hat{u})+q\hat{u} = \nabla(da\nabla \tilde{u})-dq\,\tilde{u} \quad t\in(0,T)\,, \quad \hat{u}(0)=0\\
&D^\alpha_t \hat{v}-\nabla(a\nabla \hat{v})+q\hat{v} = \nabla(da\nabla \tilde{v})-dq\,\tilde{v} \quad t\in(0,T)\,, \quad \hat{v}(0)=0
\end{aligned}
\end{equation}
with $da=a-\tilde{a}$, $dq=q-\tilde{q}$.

The system \eqref{eqn:dadq} of PDEs for $da^+$ and $dq^+$ and its stable solution has been considered in different contexts, see, e.g.,  \cite{BalUhlmann2013}, see also \cite{EKN89} and \cite{HNS95}, for the single coefficient case.

Multiplying the first equation in \eqref{eqn:dadq} by $g_v$, the second one with $g_u$, and subtracting, we get, with
\begin{equation}\label{eqn:W}
W=g_v\nabla g_u-g_u\nabla g_v, 
\end{equation}
and using some cancellation leading to 
$\nabla(da^+ W)=\nabla da^+\cdot W + da^+(g_v\triangle g_u-g_u\triangle g_v)$ that
\begin{equation}\label{eqn:PDEda}
\nabla(da^+ W) = g_vD^\alpha_t \hat{u}(T)-g_uD^\alpha_t \hat{v}(T)\,.
\end{equation}

While considering the simultaneous identification of $a$ and $q$, we restrict ourselves to one space dimension $\Omega=(0,L)$, since this more easily allows to resolve \eqref{eqn:PDEda} and, in the computation of $dq^+$, to eliminate $\nabla da$; 
\footnote{space derivatives will therefore simply denoted by a prime, whereas, to avoid additional notation, we will stay with the nabla notation when it is a partial space derivative of a space and time dependent function}
Moreover, we will need this restriction on the space dimension for being able to use the embedding $H^1(\Omega)\to L^\infty(\Omega)$. Later on we will consider the identification of the potential $q$ alone, which works in higher space dimensions as well, see Section \oldref{sec:potentialonly}.
Additionally, to prove decay of $\tilde{u}$ as needed for establishing contractivity, we will focus on the homogeneous case $r_u=r_v=0$ and assume that $W$ is bounded away from zero.

\begin{lemma}
Let $\Omega=(0,L)\subseteq\mathbb{R}$, $r_u=r_v=0$, $\alpha\in (0,1)$ and let $g_u,g_v\in H^2(\Omega)$ satisfy $|\frac{1}{W(x)}|\leq C_0$ for $W$ defined in \eqref{eqn:W}.
Then any solution $(da^+,dq^+)$ of \eqref{eqn:dadq} with $da^+(0)=0$ satisfies the estimate 
\begin{equation}\label{eqn:estdadq}
\begin{aligned}
\|da^+\|_{\Linf}\leq&
C_0\left(\|g_v\|_{\Ltwo}\|D^\alpha_t \hat{u}(T)\|_{\Ltwo}+\|g_u\|_{\Ltwo}\|D^\alpha_t \hat{v}(T)\|_{\Ltwo}\right)\\
\|{da^+}'\|_{\Ltwo}\leq&C_0^2\|W'\|_{\Ltwo}\left(\|g_v\|_{\Ltwo}\|D^\alpha_t \hat{u}(T)\|_{\Ltwo}+\|g_u\|_{\Ltwo}\|D^\alpha_t \hat{v}(T)\|_{\Ltwo}\right)\\
&\quad +C_0\left(\|g_v\|_{\Linf}\|D^\alpha_t \hat{u}(T)\|_{\Ltwo}+\|g_u\|_{\Linf}\|D^\alpha_t \hat{v}(T)\|_{\Ltwo}\right)\\
\|dq^+\|_{\Ltwo}\leq&C_0\left(\|g_v'\|_{\Linf}\|D^\alpha_t \hat{u}(T)\|_{\Ltwo}+\|g_u'\|_{\Linf}\|D^\alpha_t \hat{v}(T)\|_{\Ltwo}\right)% \\
+\|W'\|_{\Ltwo}\|da^+\|_{\Linf}\,.\\
\end{aligned}
\end{equation}
\end{lemma}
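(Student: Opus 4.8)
The plan is to treat \eqref{eqn:dadq} as an overdetermined but, by construction, consistent linear system for the pair $(da^+,dq^+)$ whose right-hand side involves only $D^\alpha_t\hat u(T)$ and $D^\alpha_t\hat v(T)$, and to read off the three bounds by eliminating one unknown at a time and then invoking the Cauchy--Schwarz and Hölder inequalities. Note that the hypotheses $r_u=r_v=0$ and $\alpha\in(0,1)$ are not actually used in this lemma; together with a decay estimate for $\hat u,\hat v$ they enter only afterwards, when $\|D^\alpha_t\hat u(T)\|_2$ and $\|D^\alpha_t\hat v(T)\|_2$ are in turn estimated against $da=a-\tilde a$, $dq=q-\tilde q$ through \eqref{eqn:uvhat}. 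Throughout one uses the one-dimensional embedding $H^2(0,L)\hookrightarrow W^{1,\infty}(0,L)$, so that $g_u,g_v,g_u',g_v'$ all lie in $L^\infty$.

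First I would recover $da^+$. Eliminating $dq^+$ exactly as in \eqref{eqn:PDEda} --- that is, taking $g_v$ times the first equation in \eqref{eqn:dadq} minus $g_u$ times the second, and using the one-dimensional identity $\nabla(da^+W)=(da^+W)'$ --- leaves $(da^+W)'=g_v\,D^\alpha_t\hat u(T)-g_u\,D^\alpha_t\hat v(T)$. Integrating from $0$ and using $da^+(0)=0$ gives $da^+(x)W(x)=\int_0^x(g_v\,D^\alpha_t\hat u(T)-g_u\,D^\alpha_t\hat v(T))\,ds=:\Phi(x)$; multiplying by $W^{-1}$, using $|W^{-1}|\le C_0$, extending the integral to $(0,L)$ and Cauchy--Schwarz on each term give $\|\Phi\|_\infty\le\|g_v\|_2\|D^\alpha_t\hat u(T)\|_2+\|g_u\|_2\|D^\alpha_t\hat v(T)\|_2$ and hence the first estimate. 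For the second, differentiate $da^+=W^{-1}\Phi$ to get $(da^+)'=-W^{-2}W'\Phi+W^{-1}\Phi'$ with $\Phi'=g_v\,D^\alpha_t\hat u(T)-g_u\,D^\alpha_t\hat v(T)$; bound the first term in $L^2$ by $\|W^{-2}W'\|_2\|\Phi\|_\infty\le C_0^2\|W'\|_2\|\Phi\|_\infty$ and the second by $C_0\|\Phi'\|_2\le C_0(\|g_v\|_\infty\|D^\alpha_t\hat u(T)\|_2+\|g_u\|_\infty\|D^\alpha_t\hat v(T)\|_2)$, and add.

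For $dq^+$ the point is to eliminate $(da^+)'$ rather than $da^+$: taking $g_v'$ times the first equation in \eqref{eqn:dadq} minus $g_u'$ times the second, the terms proportional to $(da^+)'g_u'g_v'$ cancel, leaving $dq^+\,W=da^+(g_u'g_v''-g_v'g_u'')+g_v'\,D^\alpha_t\hat u(T)-g_u'\,D^\alpha_t\hat v(T)$. Dividing by $W$ and using $|W^{-1}|\le C_0$ with Hölder bounds the two data terms by $C_0\|g_v'\|_\infty\|D^\alpha_t\hat u(T)\|_2$ and $C_0\|g_u'\|_\infty\|D^\alpha_t\hat v(T)\|_2$, and the remaining contribution by $\|da^+\|_\infty$ times the $L^2$-norm of the second-order coefficient $W^{-1}(g_u'g_v''-g_v'g_u'')$, which is estimated via $\|W'\|_2$ together with $\|g_u'\|_\infty,\|g_v'\|_\infty$ and $|W^{-1}|\le C_0$; this yields the last term of the third estimate. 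The computation is otherwise routine, and the two load-bearing points are the cancellation of the $(da^+)'$-terms in the $dq^+$-identity --- the ``eliminate $\nabla da$'' device flagged in the footnote, which is precisely what keeps the third bound free of derivatives of $da^+$ and so enables the later contraction step --- and the standing lower bound $|W^{-1}|\le C_0$, without which one cannot solve for $da^+$ or $dq^+$ pointwise; I do not anticipate any real obstacle beyond bookkeeping.
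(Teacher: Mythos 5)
Your argument coincides with the paper's proof step for step: integrate the eliminated equation \eqref{eqn:PDEda} from $0$ using $da^+(0)=0$ and divide by $W$ to get the $L^\infty$ bound, differentiate the resulting quotient for the $L^2$ bound on ${da^+}'$, and eliminate ${da^+}'$ by cross-multiplying the system \eqref{eqn:dadq} with $g_v'$, $g_u'$ to isolate $dq^+$. The one soft spot --- passing from the coefficient $g_u'g_v''-g_v'g_u''$ of $da^+$ in the $dq^+$ identity to $W'=g_vg_u''-g_ug_v''$ so that the last term reads $\|W'\|_2\|da^+\|_\infty$ --- is present in the paper's own proof as well (which simply writes $\tilde W=W'$ for that coefficient), so nothing is missing relative to it.
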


\begin{proof}
From \eqref{eqn:PDEda}
\[
\begin{aligned}
da^+(x)=&\frac{1}{W(x)}\left(da^+(0) W(0) +\int_0^x(g_v(s)D^\alpha_t \hat{u}(s,T)-g_u(s)D^\alpha_t \hat{v}(s,T))\, ds\right)\\
{da^+}'(x)=&-\frac{W'(x)}{W(x)^2}\left(da^+(0) W(0) +\int_0^x(g_v(s)D^\alpha_t \hat{u}(s,T)-g_u(s)D^\alpha_t \hat{v}(s,T))\, ds\right)\\
&+\frac{1}{W(x)}(g_v(x)D^\alpha_t \hat{u}(x,T)-g_u(x)D^\alpha_t \hat{v}(x,T))\,.
\end{aligned}
\]

Multiplying the first equation in \eqref{eqn:dadq} by $g_v'$,  the second one with $g_u'$, and subtracting, we can analogously eliminate ${da^+}'$ to get, with
$\tilde{W} = g_v g_u''- g_u g_v''=W'$
\[
dq^+(x)=\frac{1}{W(x)}\left(g_v'(x)D^\alpha_t \hat{u}(x,T)-g_u'(x)D^\alpha_t \hat{v}(x,T)-W'da^+\right)
\]
This yields the estimate \eqref{eqn:estdadq}.
\end{proof}
\begin{remark}
From \eqref{eqn:estdadq} it seems that the problem of recovering $q$ is more ill-posed since the estimates for $dq^+$ -- even in a weaker norm -- require higher derivatives of the data $g_u$, $g_v$. This is confirmed by the computational results.

Estimate \eqref{eqn:estdadq} could obviously as well be obtained by assuming $da^+(L)=0$ instead of $da^+(L)=0$. To achieve that $da^+$ vanishes at one of the boundary points, we assume that $a(0)=:a_0$ or $a(L)=:a_L$ is known and prescribe this value as an additional condition in \eqref{eqn:aplusqplus}.
\end{remark}

To estimate  $\|D^\alpha_t \hat{u}(T)\|_{\Ltwo}$ (and likewise $\|D^\alpha_t \hat{v}(T)\|_{\Ltwo}$) appearing in the right hand side of \eqref{eqn:estdadq}, we use eigensystems $(\lambda_j, \varphi_j)$ $(\lamtil_j,\phitil_j)$ of the operators defined by $\mathbb{L}w=-\nabla\cdot(a\nabla w)+q\,w$ and $\widetilde{\mathbb{L}}w=-\nabla\cdot(\tilde{a}\nabla w)+\tilde{q}w$, to obtain the representations
\begin{eqnarray}
\hat{u}(x,t)&=&\sum_{j=1}^\infty \int_0^t s^{\alpha-1}E_{\alpha,\alpha}(-\lambda_j s^\alpha)\langle\nabla\cdot(da\nabla\tilde{u}(t-s))-dq\,\tilde{u}(t-s),\varphi_j\rangle\, ds \varphi_j(x)
\label{eqn:repr_uhat}\\
\tilde{u}(x,t)&=&\sum_{j=1}^\infty E_{\alpha,1}(-\lambda_j t^\alpha)\langle u_0,\phitil_j\rangle \phitil_j(x)\,,
\label{eqn:repr_util}
\end{eqnarray}
where we have assumed $r_u=0$ in order to obtain a proper decay of $D^\alpha_t\tilde{u}(t)$.

We first of all provide an estimate of the right hand side of the equation \eqref{eqn:uvhat} for $\hat{u}$, i.e., of the inhomogeneity in \eqref{eqn:repr_uhat}.

For this purpose we will make use of the Poincar\'{e}-Friedrichs type inequality 
\begin{equation}\label{eqn:PF}
\|w\|_{\Ltwo}^2\leq C_{PF\Omega} \|\nabla w\|_{\Ltwo}^2 + C_{PF\partial\Omega} \int_{\partial\Omega}\gamma w^2\, ds\quad \mbox{ for all }w\in H^1(\Omega)
\end{equation}
and the assumptions
\begin{equation}\label{eqn:boundsaq}
\tilde{a}(x)\geq 2\ul{a}>0,\qquad 
\|\tilde{q}-\ul{q}\|_{\Ltwo}\leq \frac{\varrho}{(C_{H^1\to L^\infty}^\Omega)^2}
\end{equation} 
for some constant $\ul{q}$ (not necessarily positive) with
\begin{equation}\label{eqn:ulaulqvarrho}
\varrho\leq\ul{a}\mbox{ and }\Bigl( \varrho\leq\ul{q} \mbox{ or }
\varrho\geq\max\{\ul{q}+\frac{1}{C_{PF\partial\Omega}}, \frac{\ul{a}+C_{PF\Omega}}{1+C_{PF\Omega}}\}\Bigr)\,,
\end{equation}
as well as 
\begin{equation}\label{eqn:smallnessaprime}
\|\tilde{a}'\|_{\Ltwo}\leq \frac{\ul{a}}{C_{H^1\to L^\infty}^\Omega \|(-\triangle+\mbox{id})^{-1}\|_{L^2(\Omega)\to H^2(\Omega)}}\,.
\end{equation} 

\begin{lemma}\label{lem:util}
The function $\tilde{u}$ defined by \eqref{eqn:repr_util}, with \eqref{eqn:boundsaq}, \eqref{eqn:ulaulqvarrho}, \eqref{eqn:smallnessaprime} and $\widetilde{\mathbb{L}}u_0\in L^2(\Omega)$ satisfies the estimate 
\begin{equation}\label{eqn:estrhs1}
\begin{aligned}
&\|\nabla\cdot(da\nabla D^\alpha_t \tilde{u}(t))-dq\,D^\alpha_t \tilde{u}(t)\|_{\Ltwo}\\
&\leq C_2 \, e(\lamtil_1,t) \, \|\widetilde{\mathbb{L}}u_0\|_{\Ltwo} 
\Bigl(\|da'\|_{\Ltwo}+\|da\|_{\Linf}+\|dq\|_{\Ltwo}\Bigr)
\end{aligned}
\end{equation}
for some $C_2$ depending only on $\Omega$ and $\frac{1}{\ul{a}}$, where 
$\;{\displaystyle
e(\lamtil_1,t):=\sup_{\mu\geq\lamtil_1} \max\{1,\mu\} E_{\alpha,1}(-\mu t^\alpha)\,.
}$
\end{lemma}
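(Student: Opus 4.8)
The plan is to recognize $D^\alpha_t\tilde u(t)$ as (minus) a homogeneous subdiffusion solution and then exploit the smoothing of the Mittag-Leffler semigroup, quantified by $e(\lamtil_1,t)$. Since $E_{\alpha,1}(-\lambda t^\alpha)$ solves $D^\alpha_t y=-\lambda y$, $y(0)=1$, term-by-term differentiation of \eqref{eqn:repr_util} (legitimate for $t>0$, the differentiated series converging uniformly) together with the identity $\lamtil_j\langle u_0,\phitil_j\rangle=\langle\widetilde{\mathbb{L}}u_0,\phitil_j\rangle$ --- valid because $\widetilde{\mathbb{L}}u_0\in L^2(\Omega)$ puts $u_0$ in the domain of $\widetilde{\mathbb{L}}$ --- yields
\[
z(t):=D^\alpha_t\tilde u(t)=-\sum_{j=1}^\infty E_{\alpha,1}(-\lamtil_j t^\alpha)\,\langle\widetilde{\mathbb{L}}u_0,\phitil_j\rangle\,\phitil_j\,,
\]
so that $-z$ is the solution of $D^\alpha_t w+\widetilde{\mathbb{L}}w=0$ with $w(0)=\widetilde{\mathbb{L}}u_0\in L^2(\Omega)$. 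The hypothesis $r_u=0$ is exactly what makes $z$ decay.

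Next I would record the one-line spectral bound that makes $e$ appear. By definition $\|z(t)\|_{\dot{H}^2(\Omega)}=\|\widetilde{\mathbb{L}}z(t)\|_{\Ltwo}$, and
\[
\|\widetilde{\mathbb{L}}z(t)\|_{\Ltwo}^2=\sum_{j=1}^\infty\lamtil_j^2\,E_{\alpha,1}(-\lamtil_j t^\alpha)^2\,\langle\widetilde{\mathbb{L}}u_0,\phitil_j\rangle^2\leq\Bigl(\sup_{\mu\geq\lamtil_1}\mu\,E_{\alpha,1}(-\mu t^\alpha)\Bigr)^2\|\widetilde{\mathbb{L}}u_0\|_{\Ltwo}^2\leq e(\lamtil_1,t)^2\,\|\widetilde{\mathbb{L}}u_0\|_{\Ltwo}^2\,,
\]
and likewise $\|z(t)\|_{\dot{H}^s(\Omega)}\leq e(\lamtil_1,t)\|\widetilde{\mathbb{L}}u_0\|_{\Ltwo}$ for every $s\in[0,2]$, using $\mu^{s/2}\leq\max\{1,\mu\}$ and that $E_{\alpha,1}(-\mu t^\alpha)$ is decreasing in $\mu$ so that $\sup_{\mu\geq\lamtil_1}E_{\alpha,1}(-\mu t^\alpha)=E_{\alpha,1}(-\lamtil_1 t^\alpha)\leq e(\lamtil_1,t)$.

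Then I would pass to physical-space norms in $\Omega=(0,L)$: writing $\nabla\cdot(da\nabla z)=da'\,z'+da\,z''$,
\[
\|\nabla\cdot(da\nabla z)-dq\,z\|_{\Ltwo}\leq\|da'\|_{\Ltwo}\|z'\|_{\Linf}+\|da\|_{\Linf}\|z''\|_{\Ltwo}+\|dq\|_{\Ltwo}\|z\|_{\Linf}\,,
\]
and I would bound $\|z\|_{\Linf}$, $\|z'\|_{\Linf}$ by the one-dimensional embeddings $H^1(\Omega)\hookrightarrow L^\infty(\Omega)$ and $H^2(\Omega)\hookrightarrow W^{1,\infty}(\Omega)$, and $\|z''\|_{\Ltwo}$ together with $\|z\|_{H^2(\Omega)}$ from the elliptic identity $\tilde a\,z''=\tilde q\,z-\widetilde{\mathbb{L}}z-\tilde a'\,z'$: inverting $-\triangle+\mbox{id}$, the first-order term is absorbed into the left-hand side using $\tilde a\geq 2\ul{a}$ and the smallness \eqref{eqn:smallnessaprime} of $\|\tilde a'\|_{\Ltwo}$, while the zeroth-order term is controlled by \eqref{eqn:boundsaq}--\eqref{eqn:ulaulqvarrho}, giving $\|z(t)\|_{H^2(\Omega)}\leq C(\Omega,1/\ul{a})\,\|\widetilde{\mathbb{L}}z(t)\|_{\Ltwo}$ with a constant uniform over the admissible coefficients. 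Inserting the bound from the previous step, taken at $s=2$ for the leading term and $s=0$ for the lower-order remainder, produces \eqref{eqn:estrhs1}.

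The main obstacle is precisely this last uniform elliptic-regularity step: the equivalence between the spectrally defined $\dot{H}^s(\Omega)$ and the Sobolev $H^s(\Omega)$ has constants that a priori depend on $\tilde a$ and $\tilde q$, whereas the lemma demands $C_2=C_2(\Omega,1/\ul{a})$. The conditions \eqref{eqn:boundsaq}, \eqref{eqn:ulaulqvarrho}, \eqref{eqn:smallnessaprime} are calibrated exactly so that $\widetilde{\mathbb{L}}$ is a controlled perturbation of $-\triangle+\mbox{id}$ --- the factor $\|(-\triangle+\mbox{id})^{-1}\|_{L^2(\Omega)\to H^2(\Omega)}$ appearing in \eqref{eqn:smallnessaprime} being what forces the absorption argument to close --- and verifying rigorously that the resulting estimate is genuinely independent of the particular admissible pair $(\tilde a,\tilde q)$ is the technical heart of the argument; everything else is the elementary spectral manipulation above and one-dimensional embeddings.
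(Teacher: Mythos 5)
Your proposal is correct and follows essentially the same route as the paper: the same splitting $\|da'\|_{\Ltwo}\|\nabla D^\alpha_t\tilde u\|_{\Linf}+\|da\|_{\Linf}\|\triangle D^\alpha_t\tilde u\|_{\Ltwo}+\|dq\|_{\Ltwo}\|D^\alpha_t\tilde u\|_{\Linf}$, the same observation that $D^\alpha_t\tilde u$ solves the homogeneous equation with initial datum $-\widetilde{\mathbb{L}}u_0$, the same absorption of $\tilde a'$ via \eqref{eqn:smallnessaprime} to control $\|\triangle D^\alpha_t\tilde u\|_{\Ltwo}$ by $\|\nabla\cdot(\tilde a\nabla D^\alpha_t\tilde u)\|_{\Ltwo}$, and the same spectral bound yielding $e(\lamtil_1,t)\|\widetilde{\mathbb{L}}u_0\|_{\Ltwo}$. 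The only cosmetic difference is that you bound the intermediate gradient norm by spectral interpolation in $\dot H^s$, where the paper uses an integration-by-parts energy identity together with the Poincar\'e--Friedrichs inequality \eqref{eqn:PF}; both give the same uniform constant.
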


\begin{proof}
We use the fact that $\tilde{u}$ solves the first of the two equations \eqref{eqn:uv} with $a=\tilde{a}$ and $r_u=0$, as well as homogeneous impedance boundary conditions and start by estimating
\begin{equation}\label{eqn:estrhs}
\begin{aligned}
&\|\nabla\cdot(da\nabla D^\alpha_t \tilde{u}(t))-dq\,D^\alpha_t \tilde{u}(t)\|_{\Ltwo}\\
&\leq
\|da'\|_{\Ltwo} \|\nabla D^\alpha_t \tilde{u}(t)\|_{\Linf} + \|da\|_{\Linf}\|\triangle D^\alpha_t \tilde{u}(t)\|_{\Ltwo}+ \|dq\|_{\Ltwo} \|D^\alpha_t \tilde{u}(t)\|_{\Linf}\\
&\leq
C_1(\|da'\|_{\Ltwo} + \|da\|_{\Linf}) 
\Bigl(\|\triangle D^\alpha_t \tilde{u}(t)\|_{\Ltwo}+\|D^\alpha_t \tilde{u}(t)\|_{\Ltwo}\Bigr)\\
&\quad+ \|dq\|_{\Ltwo} C_{H^1\to L^\infty}^\Omega\Bigl(\|\nabla D^\alpha_t \tilde{u}(t)\|_{\Ltwo}+\|D^\alpha_t \tilde{u}(t)\|_{\Ltwo}\Bigr)
\end{aligned}
\end{equation}
for $C_1=\max\{1,C_{H^1\to L^\infty}^\Omega \|(-\triangle+\mbox{id})^{-1}\|_{L^2(\Omega)\to H^2(\Omega)}\}$.
For the $\nabla D^\alpha_t \tilde{u}(t)$ term in \eqref{eqn:estrhs},
from the fact that $\tilde{z}:=D^\alpha_t \tilde{u}$ solves 
\begin{equation}\label{eqn:ztil}
D^\alpha_t \tilde{z}-\nabla(\tilde{a}\nabla \tilde{z})+\tilde{q}\tilde{z} = 0 \quad t\in(0,T)\,, \quad
\tilde{z}(0)=D^\alpha_t\hat{u}(0)=-\mathbb{L}u_0
\end{equation}
with impedance boundary conditions and using integration by parts, we get
\[
\begin{aligned}
\int_\Omega \tilde{a} |\nabla D^\alpha_t \tilde{u}(t)|^2\, dx 
=& -\int_\Omega \nabla\cdot (\tilde{a} \nabla D^\alpha_t \tilde{u}(t)) D^\alpha_t \tilde{u}(t)\, dx 
+\int_{\partial\Omega}\tilde{a}\partial_\nu D^\alpha_t \tilde{u}(t)\, D^\alpha_t \tilde{u}(t)\, ds\\
=& -\int_\Omega (D^\alpha_t)^2\tilde{u}(t) \,D^\alpha_t \tilde{u}(t)\, dx -\int_\Omega q\,(D^\alpha_t \tilde{u}(t))^2\, dx -\int_{\partial\Omega}\gamma (D^\alpha_t \tilde{u}(t))^2 ds\,.
\end{aligned}
\]
With \eqref{eqn:PF}, \eqref{eqn:boundsaq}, \eqref{eqn:ulaulqvarrho} we obtain 
\begin{equation}\label{eqn:estnabla}
\ul{a}\|\nabla D^\alpha_t \tilde{u}(t)\|_{\Ltwo}^2\leq \|(D^\alpha_t)^2\tilde{u}(t) \|_{\Ltwo} \|D^\alpha_t \tilde{u}(t)\|_{\Ltwo}\,.
\end{equation}
Moreover, for the $\triangle D^\alpha_t \tilde{u}(t)$ term in \eqref{eqn:estrhs}, we use the fact that for any $w\in H^2(\Omega)$ 
\[
\begin{aligned}
2\ul{a}\|\triangle w\|_{\Ltwo}
&\leq\|\tilde{a}\triangle w\|_{\Ltwo}
=\|\nabla\cdot(\tilde{a}\nabla w)-\nabla \tilde{a}\cdot\nabla w\|_{\Ltwo}\\
&\leq \|\nabla\cdot(\tilde{a}\nabla w)\|_{\Ltwo}+\|\tilde{a}'\|_{\Ltwo}\|\nabla w\|_{\Linf}\\
&\leq \|\nabla\cdot(\tilde{a}\nabla w)\|_{\Ltwo}
+\|\tilde{a}'\|_{\Ltwo}C_{H^1\to L^\infty}^\Omega \|(-\triangle+\mbox{id})^{-1}\|_{L^2(\Omega)\to H^2(\Omega)}(\|\triangle w\|_{\Ltwo}+\|w\|_{\Ltwo})
\end{aligned}
\]
hence by our assumption \eqref{eqn:smallnessaprime} on the
smallness of $\tilde{a}'$,
\begin{equation}\label{eqn:estw}
\|\triangle w\|_{\Ltwo}\leq \frac{1}{\ul{a}} \|\nabla\cdot(\tilde{a}\nabla w)\|_{\Ltwo} + \|w\|_{\Ltwo}\,,
\end{equation}
which we apply with $w=D^\alpha_t \tilde{u}(t)$.
In here, by \eqref{eqn:ztil},
\begin{equation}\label{eqn:estnabla2}
\|\nabla\cdot(\tilde{a}\nabla D^\alpha_t \tilde{u}(t))\|_{\Ltwo}
=\|(D^\alpha_t)^2\tilde{u}(t) +\tilde{q}D^\alpha_t \tilde{u}(t)\|_{\Ltwo}
\,,
\end{equation}
where
\begin{equation}\label{eqn:estnabla3}
\begin{aligned}
\|\tilde{q}D^\alpha_t \tilde{u}(t)\|_{\Ltwo}
&\leq\|\tilde{q}\|_{\Ltwo}C_{H^1\to L^\infty}^\Omega
\Bigl(\|\nabla D^\alpha_t \tilde{u}(t)\|_{\Ltwo}+\|D^\alpha_t \tilde{u}(t)\|_{\Ltwo}\Bigr)\\
&\leq\|\tilde{q}\|_{\Ltwo}C_{H^1\to L^\infty}^\Omega
\Bigl(\tfrac{1}{\ul{a}} \|(D^\alpha_t)^2\tilde{u}(t) \|_{\Ltwo}^{1/2} \|D^\alpha_t \tilde{u}(t)\|_{\Ltwo}^{1/2}+\|D^\alpha_t \tilde{u}(t)\|_{\Ltwo}\Bigr)\,,
\end{aligned}
\end{equation}
so that all spatial derivatives of $D^\alpha_t \tilde{u}$ that are needed for further estimating \eqref{eqn:estrhs} can be expressed via $D^\alpha_t \tilde{u}$ and $(D^\alpha_t)^2\tilde{u}$.

For these, we get from 
\[
D^\alpha_t \tilde{u}= -\widetilde{\mathbb{L}}\tilde{u}\,, \qquad 
(D^\alpha_t)^2\tilde{u}=\widetilde{\mathbb{L}}^2\tilde{u}\,,
\] 
that
\[
\begin{aligned}
D^\alpha_t \tilde{u}(x,t)=&-\sum_{j=1}^\infty \lamtil_j E_{\alpha,1}(-\lamtil_j t^\alpha)(u_0,\phitil_j) \phitil_j(x)
\\
(D^\alpha_t)^2\tilde{u} (x,t)=&\sum_{j=1}^\infty \lamtil_j^2 E_{\alpha,1}(-\lamtil_j t^\alpha)(u_0,\phitil_j) \phitil_j(x)\,.
\end{aligned}
\]
Using this and \eqref{eqn:estnabla}, \eqref{eqn:estw}, \eqref{eqn:estnabla2}, \eqref{eqn:estnabla3} in \eqref{eqn:estrhs}, we get \eqref{eqn:estrhs1}.
\end{proof}

\begin{lemma}
The function $\hat{u}$ defined by \eqref{eqn:repr_uhat}, with $\tilde{u}$ according to Lemma \oldref{lem:util} satisfies the estimate
\begin{equation}\label{eqn:estuhat2}
\|D^\alpha_t \hat{u}(T)\|_{\Ltwo}\leq
C_3 \max\{E_{\alpha,1}(-\lambda_1 T^\alpha),\Phi(T)\}
\Bigl(\|da'\|_{\Ltwo}+\|da\|_{\Linf}+\|dq\|_{\Ltwo}\Bigr)\,,
\end{equation}
with a constant $C_3$ depending only on $\Omega$, $\frac{1}{\ul{a}}$ and $\|\widetilde{\mathbb{L}}u_0\|_{\Ltwo}$ and 
\begin{equation}\label{eqn:Phi}
\Phi(T)=\sup_{\lambda\geq\lambda_1} \sup_{\mu\geq\tilde{\lambda}_1}
\int_0^T s^{\alpha-1}E_{\alpha,\alpha}(-\lambda s^\alpha)\max\{1,\mu\} E_{\alpha,1}(-\mu (T-s)^\alpha)\,ds
\end{equation}
\end{lemma}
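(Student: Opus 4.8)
The plan is to regard $\hat z:=D^\alpha_t\hat u$ as the solution of a fractional diffusion equation whose right-hand side is exactly the quantity estimated in Lemma~\oldref{lem:util}. Since $da$, $dq$ are $t$-independent and $D^\alpha_t$ commutes with $\mathbb{L}$, differentiating the equation \eqref{eqn:uvhat} for $\hat u$ (equivalently, integrating \eqref{eqn:repr_uhat} by parts in $s$ via $\lambda_j s^{\alpha-1}E_{\alpha,\alpha}(-\lambda_j s^\alpha)=-\tfrac{d}{ds}E_{\alpha,1}(-\lambda_j s^\alpha)$, which is admissible by the standard subdiffusion regularity theory, cf.\ \cite{JinRundell:2015,KaltenbacherRundell:2019a}) shows that $\hat z$ solves
\[
D^\alpha_t\hat z+\mathbb{L}\hat z=G:=\nabla\cdot(da\nabla D^\alpha_t\tilde u)-dq\,D^\alpha_t\tilde u,\qquad \hat z(0)=\nabla\cdot(da\nabla u_0)-dq\,u_0=\widetilde{\mathbb{L}}u_0-\mathbb{L}u_0=:F_0 .
\]
By Duhamel's formula in the eigenbasis of $\mathbb{L}$,
\[
D^\alpha_t\hat u(T)=\hat z(T)=E_{\alpha,1}(-T^\alpha\mathbb{L})F_0+\int_0^T s^{\alpha-1}E_{\alpha,\alpha}(-s^\alpha\mathbb{L})\,G(T-s)\,ds ,
\]
and I would estimate the two terms separately: the first yields the $E_{\alpha,1}(-\lambda_1T^\alpha)$ contribution, the second the $\Phi(T)$ contribution.

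For the homogeneous term, monotone decrease of $\lambda\mapsto E_{\alpha,1}(-\lambda T^\alpha)$ gives $\|E_{\alpha,1}(-T^\alpha\mathbb{L})F_0\|_{\Ltwo}\le E_{\alpha,1}(-\lambda_1T^\alpha)\,\|F_0\|_{\Ltwo}$, so it remains to bound $\|F_0\|_{\Ltwo}=\|\nabla\cdot(da\nabla u_0)-dq\,u_0\|_{\Ltwo}$ by $C\,\|\widetilde{\mathbb{L}}u_0\|_{\Ltwo}\bigl(\|da'\|_{\Ltwo}+\|da\|_{\Linf}+\|dq\|_{\Ltwo}\bigr)$. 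This I would do exactly as in the proof of Lemma~\oldref{lem:util}: the product/triangle inequality, the elliptic bound \eqref{eqn:estw} applied with $w=u_0$, the one-dimensional embeddings $H^2(\Omega)\hookrightarrow C^1(\Omega)$ and $H^1(\Omega)\hookrightarrow L^\infty(\Omega)$, and the smallness hypotheses \eqref{eqn:boundsaq}, \eqref{eqn:smallnessaprime}, which together make $\|\triangle u_0\|_{\Ltwo}$, $\|u_0'\|_{\Linf}$ and $\|u_0\|_{\Linf}$ controllable by $\|\widetilde{\mathbb{L}}u_0\|_{\Ltwo}$.

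For the Duhamel term, Minkowski's integral inequality together with the operator bound $\|s^{\alpha-1}E_{\alpha,\alpha}(-s^\alpha\mathbb{L})w\|_{\Ltwo}\le s^{\alpha-1}E_{\alpha,\alpha}(-\lambda_1 s^\alpha)\|w\|_{\Ltwo}$ (positivity and monotone decrease of $x\mapsto E_{\alpha,\alpha}(-x)$, plus $\lambda_1=\min_j\lambda_j$) and the bound of Lemma~\oldref{lem:util} for $\|G(T-s)\|_{\Ltwo}$ reduce this term to $C_2\,\|\widetilde{\mathbb{L}}u_0\|_{\Ltwo}(\|da'\|_{\Ltwo}+\|da\|_{\Linf}+\|dq\|_{\Ltwo})\int_0^T s^{\alpha-1}E_{\alpha,\alpha}(-\lambda_1 s^\alpha)\,e(\tilde\lambda_1,T-s)\,ds$. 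To recognise the last integral as $\le\Phi(T)$ up to a constant, I would not collapse $G(T-s)$ to the scalar $e(\tilde\lambda_1,T-s)$ but instead expand $G(\sigma)$ in the eigenbasis $\{\tilde\varphi_k\}$ of $\widetilde{\mathbb{L}}$: the time-dependence of its $k$-th mode is $E_{\alpha,1}(-\tilde\lambda_k\sigma^\alpha)$ (from the $D^\alpha_t\tilde u$ term, contributing the ``$1$'' in $\max\{1,\mu\}$) and $\tilde\lambda_k E_{\alpha,1}(-\tilde\lambda_k\sigma^\alpha)$ (from the $(D^\alpha_t)^2\tilde u$ term, contributing the ``$\mu$''), so carrying $\mu=\tilde\lambda_k\ge\tilde\lambda_1$ through the convolution makes each resulting scalar integral of the form $\int_0^T s^{\alpha-1}E_{\alpha,\alpha}(-\lambda s^\alpha)\max\{1,\mu\}E_{\alpha,1}(-\mu(T-s)^\alpha)\,ds$ with $\lambda=\lambda_1\ge\lambda_1$, whose supremum over $\lambda\ge\lambda_1$, $\mu\ge\tilde\lambda_1$ is precisely $\Phi(T)$ as defined in \eqref{eqn:Phi}. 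Finiteness of $\Phi(T)$ follows since the only singularities of the integrand are the integrable $s^{\alpha-1}$ at $s=0$ and $(T-s)^{-\alpha}$ at $s=T$ (the latter from $\max\{1,\mu\}E_{\alpha,1}(-\mu(T-s)^\alpha)\le \mathrm{const}\cdot(T-s)^{-\alpha}$, using $E_{\alpha,1}(-x)\le C/(1+x)$). Adding the two contributions and absorbing $C$, $C_2$ and $\|\widetilde{\mathbb{L}}u_0\|_{\Ltwo}$ into one constant $C_3=C_3(\Omega,1/\ul a,\|\widetilde{\mathbb{L}}u_0\|_{\Ltwo})$ gives \eqref{eqn:estuhat2}, the factor $\max\{E_{\alpha,1}(-\lambda_1T^\alpha),\Phi(T)\}$ coming from the two terms.

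The main obstacle is the Duhamel estimate just described: one has to keep the decay-governing eigenvalue $\mu=\tilde\lambda_k$ of $G$ inside the time convolution rather than estimating $\|G(T-s)\|_{\Ltwo}$ by a single scalar and supremising afterwards, because the maximiser in $\mu$ of $\max\{1,\mu\}E_{\alpha,1}(-\mu\sigma^\alpha)$ depends on $\sigma$, so that $\int\sup_\mu$ and $\sup_\mu\int$ genuinely differ. Carrying this out rigorously requires the eigenfunction expansion of $G$, the complete-monotonicity/monotonicity of $E_{\alpha,1}(-\,\cdot\,)$ and $E_{\alpha,\alpha}(-\,\cdot\,)$ and the ordering of the spectra of $\mathbb{L}$ and $\widetilde{\mathbb{L}}$, while simultaneously controlling the $s^{\alpha-1}$ singularity of the Mittag--Leffler kernel so that the resulting bound stays uniform in the modes and the constant remains finite.
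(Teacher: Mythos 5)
Your overall architecture is exactly the paper's: identify $\hat z=D^\alpha_t\hat u$ as the solution of the subdiffusion equation with source $G=\nabla\cdot(da\nabla D^\alpha_t\tilde u)-dq\,D^\alpha_t\tilde u$ and initial value $\nabla\cdot(da\nabla u_0)-dq\,u_0$, write the Duhamel representation in the eigenbasis of $\mathbb{L}$, estimate the homogeneous part by $E_{\alpha,1}(-\lambda_1T^\alpha)$ times an $L^2$ bound on the initial value (obtained with the same embeddings and \eqref{eqn:estw} as in Lemma~\oldref{lem:util}), and feed \eqref{eqn:estrhs1} into the convolution term. Up to that point you are reproducing \eqref{eqn:estuhat}.

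The one place you depart from the paper is the place where your proposal does not close. To avoid the $\int\sup_\mu$ versus $\sup_\mu\int$ issue you propose expanding $G(T-s)$ in the eigenbasis $\{\tilde\varphi_k\}$ of $\widetilde{\mathbb{L}}$ and carrying $\mu=\tilde\lambda_k$ through the convolution. But the perturbation operator $w\mapsto\nabla\cdot(da\nabla w)-dq\,w$ is not diagonal in either basis, so the $j$-th Fourier coefficient of $G(T-s)$ with respect to $\{\varphi_j\}$ is a full sum $\sum_k c_{jk}\,\tilde\lambda_kE_{\alpha,1}(-\tilde\lambda_k(T-s)^\alpha)$ (plus the lower-order terms); bounding each scalar convolution by $\Phi(T)$ then leaves you with $\Phi(T)\sum_k|c_{jk}|$, and this $\ell^1$ sum of off-diagonal coupling coefficients is not controlled by the quantities in \eqref{eqn:estuhat2}. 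The paper instead does the ``collapse to a scalar'' that you reject: it normalizes by $e(\tilde\lambda_1,T-s)$, applies the discrete Cauchy--Schwarz-type inequality, and ends up with $\sup_{j}\int_0^Ts^{\alpha-1}E_{\alpha,\alpha}(-\lambda_js^\alpha)\,e(\tilde\lambda_1,T-s)\,ds$ multiplying $\sup_t e(\tilde\lambda_1,t)^{-1}\|G(t)\|_{\Ltwo}$, the latter being bounded by Lemma~\oldref{lem:util}. You are right that this quantity is literally $\int\sup_\mu$ rather than the $\sup_\mu\int$ appearing in \eqref{eqn:Phi}, but the distinction is harmless for the purpose at hand: the proof of Lemma~\oldref{lem:Phi} estimates $\max\{1,\mu\}E_{\alpha,1}(-\mu(T-s)^\alpha)\le\Gamma(1+\alpha)\min\{1,\tilde\lambda_1\}^{-1}(T-s)^{-\alpha}$ \emph{uniformly} in $\mu\ge\tilde\lambda_1$, so the same decay-to-zero argument applies verbatim to the version with the supremum inside the integral. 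So the correct repair of your proposal is to keep your first, scalar, Minkowski-plus-Lemma~\oldref{lem:util} estimate (with $e(\tilde\lambda_1,T-s)$ in place of the fixed-$\mu$ kernel) and note that Lemma~\oldref{lem:Phi} covers that slightly larger quantity, rather than to pursue the cross-basis expansion.
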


\begin{proof}
Using the fact that $\hat{z}:=D^\alpha_t \hat{u}$ solves 
\begin{equation}\label{eqn:z}
D^\alpha_t \hat{z}-\nabla(a\nabla \hat{z})+q\hat{z} = \nabla(da\nabla D^\alpha_t\tilde{u})-dq\,D^\alpha_t\tilde{u} \quad t\in(0,T)\end{equation}
with initial conditions $\hat{z}(0)=D^\alpha_t\hat{u}(0)=\nabla(da\nabla u_0)-dq\,u_0$,
we get 
\[
\begin{aligned}
D^\alpha_t \hat{u}(x,T)=&\sum_{j=1}^\infty \Bigl\{E_{\alpha,1}(-\lambda_j T^\alpha)\langle\nabla\cdot(da\nabla u_0)-dq\,u_0,\varphi_j\rangle\\
&+\int_0^T s^{\alpha-1}E_{\alpha,\alpha}(-\lambda_j s^\alpha)\langle\nabla\cdot(da\nabla D^\alpha_t \tilde{u}(T-s))-dq\,D^\alpha_t \tilde{u}(T-s),\varphi_j\rangle\, ds\Bigr\} \varphi_j(x)\\
\end{aligned}
\]

Take the $L^2$ norm of the above expression for $D^\alpha_t \hat{u}$ to get, using an inequality of the form
\[
\sum_{j=1}^\infty\Bigl(\int_0^T a_j(s) b_j(T-s)\, ds\Bigr)^2
\leq \sup_{j\in\mathbb{N}}\Bigl(\int_0^T |a_j(s)|\, ds\Bigr)^2
\sum_{j=1}^\infty\Bigl(\sup_{t\in(0,T)} b_j(t)\Bigr)^2\,,
\]
that 
\begin{equation}\label{eqn:estuhat}
\begin{aligned}
&\|D^\alpha_t \hat{u}(T)\|_{\Ltwo}\\
&\leq
\Bigl(\sum_{j=1}^\infty E_{\alpha,1}(-\lambda_j T^\alpha)^2\langle\nabla\cdot(da\nabla u_0)-dq\,u_0,\varphi_j\rangle^2\Bigr)^{1/2}\\
&\qquad+\Bigl(\sum_{j=1}^\infty\Bigl(\int_0^T s^{\alpha-1}E_{\alpha,\alpha}(-\lambda_j s^\alpha)\langle\nabla\cdot(da\nabla D^\alpha_t \tilde{u}(T-s))-dq\,D^\alpha_t \tilde{u}(T-s),\varphi_j\rangle\, ds\Bigr)^2\Bigr)^{1/2}\\
&\leq E_{\alpha,1}(-\lambda_1 T^\alpha) \|\nabla\cdot(da\nabla u_0)-dq\,u_0\|_{\Ltwo}
\;+\;\Bigl[\sup_{j\in\mathbb{N}}\int_0^T s^{\alpha-1}E_{\alpha,\alpha}(-\lambda_j s^\alpha) e(\lamtil_1,T-s)\,ds \Bigr]\\
&\qquad\qquad\times\;\Bigl(\sum_{j=1}^\infty \sup_{t\in(0,T)} e(\lamtil_1,t)^{-1}\langle\nabla\cdot(da\nabla D^\alpha_t \tilde{u}(t))-dq\,D^\alpha_t \tilde{u}(t),\varphi_j\rangle^2\Bigr)^{1/2}\\
&\leq E_{\alpha,1}(-\lambda_1 T^\alpha) \Bigl(\|da'\|_{\Ltwo} \|u_0'\|_{\Linf} + \|da\|_{\Linf} \|u_0''\|_{\Ltwo}+ \|dq\|_{\Ltwo} \|u_0\|_{\Linf}\Bigr)\\
&\qquad+\sup_{\lambda\geq\lambda_1} \sup_{\mu\geq\tilde{\lambda}_1}
\int_0^T s^{\alpha-1}E_{\alpha,\alpha}(-\lambda_j s^\alpha)\max\{1,\mu\} E_{\alpha,1}(-\mu (T-s)^\alpha)\,ds\\
&\qquad\qquad \sup_{t\in(0,T)} e(\lamtil_1,t)^{-1}\|\nabla\cdot(da\nabla D^\alpha_t \tilde{u}(t))-dq\,D^\alpha_t \tilde{u}(t)\|_{\Ltwo}\,.
\end{aligned}
\end{equation}
Thus from \eqref{eqn:estrhs1} we get \eqref{eqn:estuhat2} with \eqref{eqn:Phi}.
\end{proof}

It is therefore crucial for contractivity to prove that $\Phi(T)$ as defined in \eqref{eqn:Phi} tends to zero for increasing $T$.

\begin{lemma} \label{lem:Phi}
For $\Phi(T)$ according to \eqref{eqn:Phi} we have $\Phi(T)\to0$ as $T\to\infty$.
\end{lemma}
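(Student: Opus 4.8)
The plan is to decompose the convolution integral in \eqref{eqn:Phi} at the midpoint $s=T/2$ and to combine two standard facts about Mittag-Leffler functions for $\alpha\in(0,1)$: the pointwise bounds $0\le E_{\alpha,1}(-x)\le \frac{C_\alpha}{1+x}$ and $0\le E_{\alpha,\alpha}(-x)\le \frac{C_\alpha}{1+x^2}$ for $x\ge0$ (companions of the lower bound $\frac{1}{1+\Gamma(1-\alpha)x}\le E_{\alpha,1}(-x)$ already quoted above), together with the elementary identity $\int_0^t s^{\alpha-1}E_{\alpha,\alpha}(-\lambda s^\alpha)\,ds=\tfrac1\lambda\bigl(1-E_{\alpha,1}(-\lambda t^\alpha)\bigr)\le\tfrac1\lambda$, which follows by integrating $\frac{d}{dt}E_{\alpha,1}(-\lambda t^\alpha)=-\lambda t^{\alpha-1}E_{\alpha,\alpha}(-\lambda t^\alpha)$. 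The point of the split is that on $[0,T/2]$ the argument $T-s$ of the Mittag-Leffler factor multiplying $\max\{1,\mu\}$ is bounded below by $T/2$, while on $[T/2,T]$ the factor $s^{\alpha-1}E_{\alpha,\alpha}(-\lambda s^\alpha)$ is uniformly small; in each case the remaining factor is integrated using one of the two facts above.

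First I would record the uniform-in-$\mu$ estimate $\bar e(\tau):=\sup_{\mu\ge\tilde\lambda_1}\max\{1,\mu\}\,E_{\alpha,1}(-\mu\tau^\alpha)\le \bar C\,\tau^{-\alpha}$ for $\tau>0$, with $\bar C=\bar C(\alpha,\tilde\lambda_1)$: for $\mu\ge1$ one has $\mu E_{\alpha,1}(-\mu\tau^\alpha)\le \mu\,\frac{C_\alpha}{1+\mu\tau^\alpha}\le C_\alpha\,\tau^{-\alpha}$, and for $\tilde\lambda_1\le\mu\le1$ one has $E_{\alpha,1}(-\mu\tau^\alpha)\le\frac{C_\alpha}{\mu\tau^\alpha}\le\frac{C_\alpha}{\tilde\lambda_1}\tau^{-\alpha}$. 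Now fix $\lambda\ge\lambda_1$, $\mu\ge\tilde\lambda_1$ and call $I(T;\lambda,\mu)$ the inner integral in \eqref{eqn:Phi}. On $[0,T/2]$ we have $T-s\ge T/2$, hence $\max\{1,\mu\}E_{\alpha,1}(-\mu(T-s)^\alpha)\le\bar e(T-s)\le\bar C\,(T/2)^{-\alpha}$, and pulling this constant in front of $\int_0^{T/2}s^{\alpha-1}E_{\alpha,\alpha}(-\lambda s^\alpha)\,ds\le\frac1\lambda\le\frac1{\lambda_1}$ bounds the $[0,T/2]$-part of $I$ by $\frac{\bar C}{\lambda_1}(T/2)^{-\alpha}$. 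On $[T/2,T]$ we instead estimate the first factor: since $\alpha<1$ the function $s^{\alpha-1}$ is decreasing and $E_{\alpha,\alpha}(-\lambda s^\alpha)\le\frac{C_\alpha}{\lambda^2 s^{2\alpha}}$, so $\sup_{s\in[T/2,T]}s^{\alpha-1}E_{\alpha,\alpha}(-\lambda s^\alpha)\le\frac{C_\alpha}{\lambda_1^2}(T/2)^{-1-\alpha}$; pulling this out and substituting $\tau=T-s$ in the remaining integral, $\int_0^{T/2}\max\{1,\mu\}E_{\alpha,1}(-\mu\tau^\alpha)\,d\tau\le\bar C\int_0^{T/2}\tau^{-\alpha}\,d\tau=\frac{\bar C}{1-\alpha}(T/2)^{1-\alpha}$, so the $[T/2,T]$-part of $I$ is at most $\frac{C_\alpha\bar C}{\lambda_1^2(1-\alpha)}(T/2)^{-2\alpha}$.

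Adding the two contributions gives $I(T;\lambda,\mu)\le C'\,(T/2)^{-\alpha}$ for $T\ge2$, where $C'$ depends only on $\alpha$, $\lambda_1$, $\tilde\lambda_1$ and, crucially, not on $\lambda\ge\lambda_1$ or $\mu\ge\tilde\lambda_1$; taking the supremum over $\lambda,\mu$ yields $\Phi(T)\le C'(T/2)^{-\alpha}\to0$ as $T\to\infty$. This even gives the explicit algebraic rate $\Phi(T)=O(T^{-\alpha})$, consistent with the earlier remark that, in contrast to the parabolic case, only polynomial rather than exponential decay is available. The only delicate point is the uniformity in the spectral parameters --- in particular ensuring that the $\max\{1,\mu\}$ prefactor does not destroy integrability of the $E_{\alpha,1}$-factor near $s=T$; this is exactly what the bound $\bar e(\tau)\le\bar C\tau^{-\alpha}$ handles, and its integrability on bounded intervals is where the assumption $\alpha<1$ enters. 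Everything else is routine Mittag-Leffler bookkeeping.
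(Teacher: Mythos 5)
Your proof is correct and follows essentially the same route as the paper's: both split the convolution at $s=T/2$ and rest on the uniform bound $\max\{1,\mu\}E_{\alpha,1}(-\mu\tau^\alpha)\le C\,\tau^{-\alpha}$, arriving at the same $O(T^{-\alpha})$ decay rate. The only differences are in the auxiliary Mittag--Leffler facts used to finish each half: on $[0,T/2]$ the paper integrates by parts via $\frac{d}{ds}E_{\alpha,1}(-\lambda s^\alpha)=-\lambda s^{\alpha-1}E_{\alpha,\alpha}(-\lambda s^\alpha)$ whereas you freeze $(T-s)^{-\alpha}$ at its maximum and use the same identity in antiderivative form, and on $[T/2,T]$ the paper extracts decay from the monotone factor $E_{\alpha,\alpha}(-\lambda(T/2)^\alpha)$ against the finite Beta integral $\int_{1/2}^{1}r^{\alpha-1}(1-r)^{-\alpha}\,dr$ whereas you invoke the quadratic decay $E_{\alpha,\alpha}(-x)\le C_\alpha/(1+x^2)$ and integrate $\tau^{-\alpha}$ instead --- both are legitimate.
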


\begin{proof}
Using the identities
\[
\lambda s^{\alpha-1}E_{\alpha,\alpha}(-\lambda s^\alpha)=\frac{d}{ds} E_{\alpha,1}(-\lambda s^\alpha)\,, \quad 
E_{\alpha,1}(0)=1\ \quad E_{\alpha,\alpha}(0)=\frac{1}{\Gamma(\alpha)}\,,
\]
and the bound
\[
%\frac{1}{1+\Gamma(1-\alpha)x}\leq 
E_{\alpha,1}(-x)\leq \frac{1}{1+\Gamma(1+\alpha)^{-1}x}
\]
that hold for every $\alpha\in(0,1)$ and all $s\in\mathbb{R}$ and all $x\in\mathbb{R}^+$,
as well as the complete monotonicity of the function $x\mapsto E_{\alpha,1}(-x)$ on $\mathbb{R}^+$,
see, e.g., \cite{Djrbashian:1993,JinRundell:2015}, we can estimate as follows 
\[
\begin{aligned}
&\int_0^T s^{\alpha-1}E_{\alpha,\alpha}(-\lambda s^\alpha) \mu E_{\alpha,1}(-\mu (T-s)^\alpha)\,ds\\
&\leq \Gamma(1+\alpha)\Bigl(\int_0^{\frac{T}{2}} s^{\alpha-1}E_{\alpha,\alpha}(-\lambda s^\alpha)  (T-s)^{-\alpha}\,ds
+\int_{\frac{T}{2}}^T s^{\alpha-1}E_{\alpha,\alpha}(-\lambda s^\alpha)  (T-s)^{-\alpha}\,ds\Bigr)
\\
&=\Gamma(1+\alpha)\Bigl(\tfrac{\alpha}{\lambda}\int_0^{\tfrac{T}{2}} E_{\alpha,1}(-\lambda s^\alpha)  (T-s)^{-\alpha-1}\,ds
- \tfrac{\alpha}{\lambda}T^{-\alpha}
+\tfrac{\alpha}{\lambda}E_{\alpha,1}(-\lambda (\tfrac{T}{2})^\alpha) (\tfrac{T}{2})^{-\alpha}\\
&\qquad\qquad\qquad + E_{\alpha,\alpha}(-\lambda (\tfrac{T}{2})^\alpha)\int_{\tfrac{T}{2}}^T  s^{\alpha-1} (T-s)^{-\alpha}\,ds
\Bigr)
\\
&\leq\Gamma(1+\alpha)\Bigl(\tfrac{\alpha}{\lambda}\int_0^{\tfrac{T}{2}} (T-s)^{-\alpha-1}\,ds
+ \tfrac{\alpha}{\lambda}(\tfrac{T}{2})^{-\alpha} + E_{\alpha,\alpha}(-\lambda (\tfrac{T}{2})^\alpha)\int_{\tfrac{1}{2}}^1  r^{\alpha-1} (1-r)^{-\alpha}\,ds\Bigr)\,,
\end{aligned}
\]
where we have used the substitution $s=Tr$.
Hence we get 
\[
\begin{aligned}
\Phi(T)&\leq \frac{\Gamma(1+\alpha)}{\min\{1,\tilde{\lambda}_1\}}\Bigl(
\tfrac{1+\alpha}{\lambda_1}(\tfrac{T}{2})^{-\alpha}
+ E_{\alpha,\alpha}(-\lambda_1 (\tfrac{T}{2})^\alpha)\int_{\tfrac{1}{2}}^1  r^{\alpha-1} (1-r)^{-\alpha}\,ds \Bigr)\\ 
&\to0 \mbox{ as }T\to\infty\,.
\end{aligned}
\]
\end{proof}

This together with \eqref{eqn:estdadq}, \eqref{eqn:estuhat2} yields contractivity of $\mathbb{T}$ for sufficiently large $T$ on the closed convex set 
\begin{equation}\label{eqn:B}
\begin{aligned}
B=\Bigl\{(a,q)\in &H^1(\Omega)\times L^2(\Omega)\, : \, a(0)=a_0(0)\,, \quad a(x)\geq\ul{a}\, \quad \|\tilde{q}-\ul{q}\|_{\Ltwo}\leq \frac{\varrho}{(C_{H^1\to L^\infty}^\Omega)^2}\,, \\ &
\|a'\|_{\Ltwo}\leq \frac{\ul{a}}{C_{H^1\to L^\infty}^\Omega \|(-\triangle+\mbox{id})^{-1}\|_{L^2(\Omega)\to H^2(\Omega)}}
%\,, \quad ||| (a-a_0,q-q_0)|||\leq\rho
\Bigr\}\,.
\end{aligned}
\end{equation}
with the norm 
\begin{equation}\label{eqn:norm}
|||(a,q)|||:=\|a'\|_{\Ltwo}+\|a\|_{\Linf}+\|q\|_{\Ltwo}\,.
\end{equation}
In the definition of $B$, we assume that $\ul{a}>0$, $\ul{q}\in\mathbb{R}$, $\rho>0$ satisfy 
\eqref{eqn:ulaulqvarrho} and 
\begin{equation}\label{eqn:ulaulqvarrho2}
\varrho<\min\{\tfrac{\ul{a}}{C_{PF\Omega}},\tfrac{1}{C_{PF\partial\Omega}}\}+\ul{q}\,.
\end{equation}
Boundedness away from zero of $\lambda_1=\lambda_1(a,q)$, uniformly for $(a,q)\in B$ follows from the identity 
\[
\begin{aligned}
\lambda_j \|\varphi_j\|_{\Ltwo}^2 
=&\int_\Omega \Bigl(-\nabla\cdot(a\nabla\varphi_j) + q \varphi_j^2\Bigr)\, dx \\
=&\int_\Omega \Bigl(a|\nabla\varphi_j|^2+q \varphi_j^2\Bigr)\, dx +\int_{\partial\Omega}\gamma \varphi_j^2 \, ds\\
\geq&2\ul{a}\|\nabla \varphi_j\|_{\Ltwo}^2+\ul{q}\|\varphi_j\|_{\Ltwo}^2 +\int_{\partial\Omega}\gamma \varphi_j^2 \, ds - \varrho \Bigl(\|\nabla \varphi_j\|_{\Ltwo}^2+\|\varphi_j\|_{\Ltwo}^2\Bigr)
\end{aligned}
\]
hence
\[
(\lambda_j+\varrho-\ul{q}) \|\varphi_j\|_{\Ltwo}^2\geq (2\ul{a}-\varrho)\|\nabla \varphi_j\|_{\Ltwo}^2
+\int_{\partial\Omega}\gamma \varphi_j^2 \, ds
\geq \min\{\tfrac{\ul{a}}{C_{PF\Omega}},\tfrac{1}{C_{PF\partial\Omega}}\} \|\varphi_j\|_{\Ltwo}^2
\]
which together with the constraints \eqref{eqn:ulaulqvarrho} and \eqref{eqn:ulaulqvarrho2} on $\ul{a}$, $\ul{q}$ and $\varrho$ yields 
\[
\lambda_j\geq \ul{\lambda} := \min\{\tfrac{\ul{a}}{C_{PF\Omega}},\tfrac{1}{C_{PF\partial\Omega}}\}-\varrho+\ul{q} >0\,.
\]
Thus \eqref{eqn:estuhat2} remains valid with $\min\{\lambda_1,\lamtil_1\}$ replaced by $\ul{\lambda}$ and we get contractivity with a uniform constant on $B$, provided $T$ is sufficiently large.

\begin{theorem}\label{theorem:contractivity}
Let $\Omega=(0,L)\subseteq\mathbb{R}$, $r_u=r_v=0$, $\alpha\in (0,1)$ and let $g_u,g_v\in H^2(\Omega)$ satisfy $|\frac{1}{W(x)}|\leq C_0$ for $W$ defined in \eqref{eqn:W}.
\\
Then for $T>0$ sufficiently large, the operator $\mathbb{T}$ defined by $\mathbb{T}(a,q)=(a^+,q^+)$ solving \eqref{eqn:aplusqplus} with $a^+(0)=a_0$ or $a^+(L)=a_L$, is a contraction on the set \eqref{eqn:B} with respect to the norm defined by \eqref{eqn:norm}.
\end{theorem}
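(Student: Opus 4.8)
The plan is simply to chain the three estimates already in hand --- \eqref{eqn:estdadq}, \eqref{eqn:estuhat2}, and Lemma~\oldref{lem:Phi} --- and then to verify that every constant that enters can be taken independent of the point of $B$. First I would fix $(a,q),(\tilde a,\tilde q)\in B$ and write $da=a-\tilde a$, $dq=q-\tilde q$, $(da^+,dq^+)=\mathbb{T}(a,q)-\mathbb{T}(\tilde a,\tilde q)$. Because both $\mathbb{T}$-images are constructed to carry the same prescribed value $a_0$ at $0$ (or $a_L$ at $L$), the difference $da^+$ vanishes at that endpoint, so the hypothesis $da^+(0)=0$ of the lemma giving \eqref{eqn:estdadq} is met; and in one space dimension the hypotheses $g_u,g_v\in H^2(\Omega)\hookrightarrow C^1(\overline\Omega)$ together with $|1/W|\le C_0$ make all of $\|g_u\|_{H^2(\Omega)}$, $\|g_v\|_{H^2(\Omega)}$, $\|W'\|_{\Ltwo}$, $C_0$ finite. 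Hence \eqref{eqn:estdadq}, with its three lines combined (substituting the $\|da^+\|_{\Linf}$ bound into the $dq^+$ line), reads $|||(da^+,dq^+)|||\le C_4\bigl(\|D^\alpha_t\hat u(T)\|_{\Ltwo}+\|D^\alpha_t\hat v(T)\|_{\Ltwo}\bigr)$ with $C_4$ a harmless constant.

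Next I would feed in \eqref{eqn:estuhat2} and its twin for $\hat v$ (the latter using $r_v=0$), whose right-hand sides are exactly $|||(da,dq)|||$ by \eqref{eqn:norm}, to obtain $|||(da^+,dq^+)|||\le C\,\max\{E_{\alpha,1}(-\lambda_1T^\alpha),\Phi(T)\}\,|||(da,dq)|||$; note that $(\tilde a,\tilde q)\in B$ is precisely what validates the hypotheses \eqref{eqn:boundsaq}, \eqref{eqn:ulaulqvarrho}, \eqref{eqn:smallnessaprime} of Lemma~\oldref{lem:util}. Then comes the uniformisation over $B$. The surviving $(a,q)$-dependence sits in $\|\widetilde{\mathbb{L}}u_0\|_{\Ltwo}$, $\|\widetilde{\mathbb{L}}v_0\|_{\Ltwo}$ and in the spectral data $\lambda_1,\lamtil_1$. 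For the former I would note that on $B$ the quantities $\|a'\|_{\Ltwo}$, $\|q-\ul q\|_{\Ltwo}$ --- hence, with $a(0)=a_0$ fixed, also $\|a\|_{\Linf}$, $\|q\|_{\Ltwo}$ --- are bounded, so $\|\widetilde{\mathbb{L}}u_0\|_{\Ltwo},\|\widetilde{\mathbb{L}}v_0\|_{\Ltwo}$ are bounded once $u_0,v_0\in H^2(\Omega)$. For the latter I would invoke the Rayleigh-quotient bound displayed just before the statement, which under \eqref{eqn:ulaulqvarrho}, \eqref{eqn:ulaulqvarrho2} gives $\lambda_1(a,q)\ge\ul\lambda>0$ uniformly on $B$. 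By monotonicity of $x\mapsto E_{\alpha,1}(-x)$ this bounds $E_{\alpha,1}(-\lambda_1T^\alpha)$ by $E_{\alpha,1}(-\ul\lambda T^\alpha)\le(1+\Gamma(1+\alpha)^{-1}\ul\lambda T^\alpha)^{-1}$, and since the suprema in \eqref{eqn:Phi} only grow when $\lambda_1,\lamtil_1$ are lowered, it bounds $\Phi(T)$ by the version $\Phi_{\ul\lambda}(T)$ of \eqref{eqn:Phi} with $\ul\lambda$ in place of both, which still tends to $0$ by Lemma~\oldref{lem:Phi}. Collecting, $|||\mathbb{T}(a,q)-\mathbb{T}(\tilde a,\tilde q)|||\le\theta(T)\,|||(a,q)-(\tilde a,\tilde q)|||$ with $\theta(T)\to0$ as $T\to\infty$, so every sufficiently large $T$ makes $\theta(T)<1$ --- the asserted contraction in the norm \eqref{eqn:norm}.

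For completeness I would also record self-mapping: applying the same estimate with $(\tilde a,\tilde q)$ taken to be the exact coefficient pair (a fixed point of $\mathbb{T}$, which we arrange to sit inside $B$ with the defining inequalities slack by more than the $|||\cdot|||$-diameter of $B$) keeps $\mathbb{T}(a,q)$ in $B$, the endpoint condition $a^+(0)=a_0$ being built in, so that Banach's fixed point theorem applies and identifies its unique fixed point with the sought pair. As for difficulty: there is no new analytic obstacle in this proof --- the genuinely delicate step, that the time convolution $\Phi(T)$ built from the only polynomially decaying Mittag-Leffler kernel nevertheless tends to zero, has already been dispatched in Lemma~\oldref{lem:Phi} by splitting $\int_0^T$ at $T/2$. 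What requires care here is purely the bookkeeping that renders $C$, $\ul\lambda$, $\|\widetilde{\mathbb{L}}u_0\|_{\Ltwo}$, $\|\widetilde{\mathbb{L}}v_0\|_{\Ltwo}$ uniform over $B$, the crux of which is the uniform spectral gap $\ul\lambda$.
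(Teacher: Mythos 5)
Your proposal is correct and follows essentially the same route as the paper, which likewise obtains the theorem by chaining \eqref{eqn:estdadq} with \eqref{eqn:estuhat2} and Lemma \oldref{lem:Phi}, and then uniformizes the constants over $B$ via the Rayleigh-quotient lower bound $\lambda_1\geq\ul{\lambda}>0$ derived from \eqref{eqn:ulaulqvarrho} and \eqref{eqn:ulaulqvarrho2}. Your added remark on the self-mapping property is a sensible supplement that the paper leaves implicit.
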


\begin{remark}
Under the conditions of Theorem \oldref{theorem:contractivity} we have convergence of the fixed point iteration defined by $(a_{k+1},q_{k+1})=\mathbb{T}(a_k,q_k)$ for any starting value in $B$. Moreover, we have uniqueness of a solution to the inverse problem on $B$.
\end{remark}

\begin{remark}\label{rem:noise}
In the realistic setting of noisy data $g_u^\delta\approx g_u$,
$g_v^\delta\approx g_v$, $g_u^\delta, g_v^\delta \in L^2(\Omega)$ with
$\|g_u-u(\cdot,T;a_{act},q_{act})\|_{\Ltwo}\leq \delta$,
$\|g_v-v(\cdot,T;a_{act},q_{act})\|_{\Ltwo}\leq \delta$, we first of all
filter the data to get approximations
$\tilde{g}_u^\delta, \tilde{g}_v^\delta \in H^2(\Omega)$ with
$$
\|\tilde{g}_u-u(\cdot,T;a_{act},q_{act})\|_{H^2(\Omega)}\leq \tilde{\delta},
\quad
\|\tilde{g}_v-v(\cdot,T;a_{act},q_{act})\|_{H^2(\Omega)}\leq \tilde{\delta},
$$
where $\tilde{\delta}=O(\psi(\delta))$ with an index function
$\psi$\footnote{i.e., a continuous non-decreasing function
$\psi : (0, \infty) \to\mathbb{R}^+$ with $\lim_{x\to0}\psi(x)=0$}
depending on the smoothness of both $v(\cdot,T;a_{act},q_{act})$,
$u(\cdot,T;a_{act},q_{act})$.
Then we perform the fixed point iteration up to a stopping index
$k_*(\tilde{\delta})\sim\log(1/\tilde{\delta})$, see, e.g.,
\cite[Section 3.5]{KaltenbacherRundell:2019c} to obtain a convergence rate
$|||(a_{k_*(\tilde{\delta})},q_{k_*(\tilde{\delta})})-(a_{act},q_{act})|||=O(\tilde{\delta})$.   
\end{remark}

\subsection{Contractivity in the potential only case}\label{sec:potentialonly}
Contractivity, hence convergence of the fixed point iteration and uniqueness can be proven also in higher space dimensions $\Omega\subseteq\mathbb{R}^3$ in case the diffusion coefficient $a$ is known, see also \cite{KaltenbacherRundell:2019b}. This is due to the fact that the regularity requirements on $D^\alpha_t u(T)$ can be weakened to just an $L^\infty$ estimate as we only recover an $L^2$ coefficient $q$, while we needed to estimating $D^\alpha_t \hat{u}(T)$ in $H^2(\Omega)$ for obtaining also $a$ in $H^1(\Omega)\cap W^{1,\infty}(\Omega)$.
Since the proof in fact very much follows the line of the previous section, we will here only show the key steps.

To this end, we consider the problem of identifying $q(x)$ in 
\begin{equation}\label{eqn:u_qonly}
\begin{aligned}
&D^\alpha_t u-\triangle u+qu=0 \quad t\in(0,T)\,, \quad u(0)=u_0
\end{aligned}
\end{equation}
with homogeneous impedance boundary conditions 
\[
\partial_\nu u+\gamma u =0
\]
from observations $g(x)=u(x,T)$.
Note that $-\triangle$ can be replaced by an arbitrary second order elliptic differential operator with sufficiently smooth coefficients. 

Define a fixed point operator $\mathbb{T}$ by 
\begin{equation}\label{eqn:T_qonly}
\mathbb{T}(q)=q^+=\frac{\triangle g-D^\alpha_t u(\cdot,T;q)}{g}, 
\mbox{ where  $u(x,t):= u(x,t;q)$ solves \eqref{eqn:u_qonly}.}
\end{equation}
For two different potentials $q$, $\tilde{q}$ (with corresponding solutions $u$, $\tilde{u}:= u(x,t;\tilde{q})$,  the difference $dq^+=\mathbb{T}(q)-\mathbb{T}(\tilde{q})=\frac{-D^\alpha_t u(T)+D^\alpha_t \tilde{u}(T)}{g}:=-\frac{D^\alpha_t \hat{u}(T)}{g}$ where $\hat{u}$, solves 
\begin{equation}\label{eqn:uhat}
\begin{aligned}
&D^\alpha_t \hat{u}-\triangle \hat{u}+q\hat{u} = -dq\,\tilde{u} \quad t\in(0,T)\,, \quad \hat{u}(0)=0
\end{aligned}
\end{equation}
with $dq=q-\tilde{q}$.

We use eigensystems $(\lambda_j, \varphi_j)$ $(\lamtil_j,\phitil_j)$ of the operators defined by $\mathbb{L}w=-\triangle w+q\,w$ and $\widetilde{\mathbb{L}}w=-\triangle w+\tilde{q}w$, to obtain the representations
\[
\begin{aligned}
\hat{u}(x,t)=&\sum_{j=1}^\infty \int_0^t s^{\alpha-1}E_{\alpha,\alpha}(-\lambda_j s)\langle-dq\,\tilde{u}(t-s),\varphi_j\rangle\, ds \varphi_j(x)\\
\tilde{u}(x,t)=&\sum_{j=1}^\infty E_{\alpha,1}(-\lambda_j t)(u_0,\phitil_j) \phitil_j(x)\\
D^\alpha_t \hat{u}(x,T)=&\sum_{j=1}^\infty \Bigl\{E_{\alpha,1}(-\lambda_j t)\langle-dq\,u_0,\varphi_j\rangle+\int_0^T\!\! s^{\alpha-1}E_{\alpha,\alpha}(-\lambda_j s)\langle-dq\,D^\alpha_t \tilde{u}(T\!-\!s),\varphi_j\rangle\, ds\Bigr\} \varphi_j(x)\\
D^\alpha_t \tilde{u}(x,t)=&-\sum_{j=1}^\infty \lamtil_j E_{\alpha,1}(-\lambda_j t)\langle u_0,\phitil_j\rangle \phitil_j(x)\,.
\end{aligned}
\]
Here
\[
\|D^\alpha_t \tilde{u}(t)\|_{\Linf}
\leq C_{\dot{H}^\sigma\to L^\infty}^\Omega \|D^\alpha_t \tilde{u}(t)\|_{\dot{H}^\sigma(\Omega)}
\leq C_0 \sup_{\mu\geq\lamtil_1} \mu^{\sigma/2}E_{\alpha,1}(-\mu t^\alpha)
\]
for $C_0=C_{\dot{H}^\sigma\to L^\infty}^\Omega \|\widetilde{\mathbb{L}} u_0\|_{\Ltwo}$. 
Hence
\[
\begin{aligned}
\|D_t^\alpha \hat{u}(T)\|_{\Ltwo}\leq
&\left(\sum_{j=1}^\infty E_{\alpha,1}(-\lambda_j T^\alpha)\langle dq\,u_0,\varphi_j\rangle^2\right)^{1/2}\\
&+\left(\sum_{j=1}^\infty\left(\int_0^T\! s^{\alpha-1}E_{\alpha,\alpha}(-\lambda_j s^\alpha)\langle dq\,D^\alpha_t \tilde{u}(T-s),\varphi_j\rangle\, ds\right)^2\right)^{1/2}\\
\leq &E_{\alpha,1}(-\lambda_1 T^\alpha) \|dq\,u_0\|_{\Ltwo}\\
&+C_0\|dq\|_{\Ltwo}
\sup_{\lambda\geq\lambda_1} \sup_{\mu\geq\lamtil_1}
\int_0^T \!s^{\alpha-1}E_{\alpha,\alpha}(-\lambda s^\alpha) \, \mu^{\sigma/2}E_{\alpha,1}(-\mu (T-s)^\alpha)\, ds\,\,.
\end{aligned}
\]
which for $\sigma\leq2$, by Lemma \oldref{lem:Phi},
tends to zero as $T\to\infty$.
Due to the fact that $H^\sigma(\Omega)$ continuously embeds into
$L^\infty(\Omega)$ for a bounded domain $\Omega\subseteq\mathbb{R}^3$, this is works in up to three space dimensions and gives contractivity for $T$ large enough.

\begin{theorem}\label{theorem:contractivity_qonly}
Let $\Omega\subseteq\mathbb{R}^3$ be a bounded $C^{1,1}$ domain, $r=0$, $\alpha\in (0,1)$ and let $g\in H^2(\Omega)$ satisfy $|\frac{1}{g(x)}|\leq C_0$.
Then for $T>0$ sufficiently large, the operator $\mathbb{T}$ defined by \eqref{eqn:T_qonly} is a contraction with respect to the $L^2$ norm.
\end{theorem}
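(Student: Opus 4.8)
The plan is to mirror the structure of the proof of Theorem \oldref{theorem:contractivity} but with the simplifications afforded by only needing $L^2$ control of $q$ and $L^\infty$ control of $D^\alpha_t\hat u(T)$. First I would show that $\mathbb{T}$ maps a suitable closed ball $\mathcal{B}\subseteq L^2(\Omega)$ (namely $\{q: \|q-\underline q\|_{\Ltwo}\le \varrho/(C^\Omega_{H^1\to L^\infty})^2\}$ as in \eqref{eqn:boundsaq}, with the analogue of \eqref{eqn:ulaulqvarrho} guaranteeing $\lambda_1(q)\ge\underline\lambda>0$ uniformly on $\mathcal{B}$) into itself. This uses $g\in H^2(\Omega)$, the bound $|1/g|\le C_0$, and the regularity of $u(\cdot,T;q)$, together with the fact $-\triangle g - D^\alpha_t u(\cdot,T;q) = q\,g$ evaluated from \eqref{eqn:u_qonly} on $t=T$; the smoothing of the subdiffusion semigroup gives $D^\alpha_t u(\cdot,T;q)\in H^\sigma(\Omega)$ for $\sigma<2$, hence $\mathbb{T}(q)\cdot g\in L^2$ and, dividing by $g$ which is bounded below, $\mathbb{T}(q)\in L^2(\Omega)$.

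Next I would establish contractivity by estimating $dq^+ = -D^\alpha_t\hat u(T)/g$, so that $\|dq^+\|_{\Ltwo}\le C_0\|D^\alpha_t\hat u(T)\|_{\Ltwo}$. The chain of estimates already displayed in the text does exactly this: represent $D^\alpha_t\hat u(x,T)$ via the eigensystem of $\mathbb{L}w=-\triangle w+qw$, split into the contribution of the initial value $\hat z(0)=-dq\,u_0$ (which carries the decaying prefactor $E_{\alpha,1}(-\lambda_1 T^\alpha)$) and the Duhamel term driven by $-dq\,D^\alpha_t\tilde u(T-s)$. For the latter I would use the Cauchy–Schwarz-type inequality
\[
\sum_{j=1}^\infty\Bigl(\int_0^T a_j(s)b_j(T-s)\,ds\Bigr)^2
\le \sup_{j}\Bigl(\int_0^T|a_j(s)|\,ds\Bigr)^2\sum_{j=1}^\infty\Bigl(\sup_{t\in(0,T)}b_j(t)\Bigr)^2
\]
together with the pointwise bound $\|D^\alpha_t\tilde u(t)\|_{\Linf}\le C_{\dot H^\sigma\to L^\infty}^\Omega\|\widetilde{\mathbb{L}}u_0\|_{\Ltwo}\sup_{\mu\ge\lamtil_1}\mu^{\sigma/2}E_{\alpha,1}(-\mu t^\alpha)$ for any $\sigma\le 2$, which is finite precisely because $H^\sigma(\Omega)\hookrightarrow L^\infty(\Omega)$ for $\Omega\subseteq\mathbb{R}^3$ bounded. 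The remaining $s$-integral is bounded by the quantity $\sup_{\lambda\ge\lambda_1}\sup_{\mu\ge\lamtil_1}\int_0^T s^{\alpha-1}E_{\alpha,\alpha}(-\lambda s^\alpha)\mu^{\sigma/2}E_{\alpha,1}(-\mu(T-s)^\alpha)\,ds$, and I would invoke Lemma \oldref{lem:Phi} (which handles the exponent $1$; the exponent $\sigma/2\le 1$ case follows by the same splitting at $T/2$ and the bound $E_{\alpha,1}(-x)\le(1+\Gamma(1+\alpha)^{-1}x)^{-1}$) to conclude this supremum, and hence a uniform contraction constant $q(T)$ on $\mathcal{B}$, tends to $0$ as $T\to\infty$. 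Choosing $T$ large enough that $q(T)<1$ and that the self-mapping property holds finishes the argument via Banach's fixed point theorem.

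The main obstacle I anticipate is the self-mapping (invariance of $\mathcal{B}$) step together with the uniform lower bound $\lambda_1(q)\ge\underline\lambda$: one must check that the constraints on $\underline a$ (here trivial, $a\equiv 1$), $\underline q$ and $\varrho$ analogous to \eqref{eqn:ulaulqvarrho}, \eqref{eqn:ulaulqvarrho2} are compatible and that $\mathbb{T}(q)$ indeed lands back in the same ball rather than merely in $L^2$ — this requires a quantitative bound on $\|\mathbb{T}(q)-\underline q\|_{\Ltwo}$ in terms of $T$, the data, and $\|\widetilde{\mathbb{L}}u_0\|_{\Ltwo}$, and one then enlarges $T$ (shrinking the reachable perturbation) until invariance holds. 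Everything else is a routine specialization of the one-dimensional argument, with the one genuine gain being that the Poincaré–Friedrichs and $W^{1,\infty}$ machinery of Lemma \oldref{lem:util} is no longer needed since we never differentiate the recovered coefficient.
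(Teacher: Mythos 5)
Your proposal follows essentially the same route as the paper's proof: the eigenfunction representation of $D^\alpha_t \hat u(T)$, the split into the initial-value term carrying the prefactor $E_{\alpha,1}(-\lambda_1 T^\alpha)$ and the Duhamel term, the bound $\|D^\alpha_t\tilde u(t)\|_{\infty}\le C\sup_{\mu\ge\tilde\lambda_1}\mu^{\sigma/2}E_{\alpha,1}(-\mu t^\alpha)$ via the embedding $H^\sigma(\Omega)\hookrightarrow L^\infty(\Omega)$ valid for $\Omega\subseteq\mathbb{R}^3$, and the appeal to Lemma \oldref{lem:Phi} to send the contraction constant to zero as $T\to\infty$. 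The extra care you devote to the self-mapping property and the uniform lower bound on $\lambda_1$ is left implicit in the paper's treatment of this case (it is carried out for the set $B$ in the preceding subsection), but it does not alter the argument.
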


%  \PiCTeX file for BB4
\input colordvi
%%%%%%%%%%%%%%%%%%%%%%%%%%%%%%%%%%%%%%%%%%%%%%%%%%%%%%%%%%%%%%%%%%%%%%%%%%%%
%
% This is the file fonts.tex
%
%%%%%%%%%%%%%%%%%%%%%%%%%%%%%%%%%%%%%%%%%%%%%%%%%%%%%%%%%%%%%%%%%%%%%%%%%%%%
%
\font\tenrm=cmr10
\font\teni=cmmi10 \skewchar\teni='177
\font\tensy=cmsy10 \skewchar\tensy='60
\font\tenex=cmex10
\font\tenit=cmti10
\font\tensl=cmsl10
\font\tenbf=cmbx10
\font\tentt=cmtt10
\font\ninerm=cmr9
\font\ninei=cmmi9 \skewchar\ninei='177
\font\ninesy=cmsy9 \skewchar\ninesy='60
\font\nineit=cmti9
\font\ninesl=cmsl9
\font\ninebf=cmbx9
\font\ninett=cmtt9
\font\eightrm=cmr8
\font\eighti=cmmi8 \skewchar\eighti='177
\font\eightsy=cmsy8 \skewchar\eightsy='60
\font\eightit=cmti8
\font\eightsl=cmsl8
\font\eightbf=cmbx8
\font\eighttt=cmtt8
\font\sevenrm=cmr7
\font\seveni=cmmi7 \skewchar\seveni='177
\font\sevensy=cmsy7 \skewchar\sevensy='60
\font\sevenbf=cmbx7
\font\sevenit=cmmi7
\font\sevensl=cmmi7
\font\seventt=cmr7
\font\sixrm=cmr6
\font\sixi=cmmi6 \skewchar\sixi='177
\font\sixsy=cmsy6 \skewchar\sixsy='60
\font\sixbf=cmbx6
\font\fiverm=cmr5
\font\fivei=cmmi5 \skewchar\fivei='177
\font\fivesy=cmsy5 \skewchar\fivesy='60
\font\fivebf=cmbx5
\def\tenpoint{\def\rm{\fam0\tenrm}%
        \textfont0=\tenrm \scriptfont0=\sevenrm \scriptscriptfont0=\fiverm
        \textfont1=\teni \scriptfont1=\seveni \scriptscriptfont1=\fivei
        \textfont2=\tensy \scriptfont2=\sevensy \scriptscriptfont2=\fivesy
        \textfont3=\tenex \scriptfont3=\tenex \scriptscriptfont3=\tenex
        \def\it{\fam\itfam\tenit}%
        \textfont\itfam=\tenit
        \def\sl{\fam\slfam\tensl}%
        \textfont\slfam=\tensl
        \def\bf{\fam\bffam\tenbf}%
        \textfont\bffam=\tenbf \scriptfont\bffam=\sevenbf
                \scriptscriptfont\bffam=\fivebf
        \def\tt{\fam\ttfam\tentt}%
        \textfont\ttfam=\tentt
        \normalbaselineskip=12pt%
        \let\sc=\eightrm        % Small caps
        \setbox\strutbox=\hbox{\vrule height8.5pt depth3.5pt width0pt}%
        \normalbaselines\rm}
\def\ninepoint{\def\rm{\fam0\ninerm}%
        \textfont0=\ninerm \scriptfont0=\sixrm \scriptscriptfont0=\fiverm
        \textfont1=\ninei \scriptfont1=\sixi \scriptscriptfont1=\fivei
        \textfont2=\ninesy \scriptfont2=\sixsy \scriptscriptfont2=\fivesy
        \textfont3=\tenex \scriptfont3=\tenex \scriptscriptfont3=\tenex
        \def\it{\fam\itfam\nineit}%
        \textfont\itfam=\nineit
        \def\sl{\fam\slfam\ninesl}%
        \textfont\slfam=\ninesl
        \def\bf{\fam\bffam\ninebf}%
        \textfont\bffam=\ninebf \scriptfont\bffam=\sixbf
                \scriptscriptfont\bffam=\fivebf
        \def\tt{\fam\ttfam\ninett}%
        \textfont\ttfam=\ninett
        \normalbaselineskip=11pt%
        \let\sc=\sevenrm        % Small caps
        \setbox\strutbox=\hbox{\vrule height8pt depth3pt width0pt}%
        \normalbaselines\rm}
\def\eightpoint{\def\rm{\fam0\eightrm}%
        \textfont0=\eightrm \scriptfont0=\sixrm \scriptscriptfont0=\fiverm
        \textfont1=\eighti \scriptfont1=\sixi \scriptscriptfont1=\fivei
        \textfont2=\eightsy \scriptfont2=\sixsy \scriptscriptfont2=\fivesy
        \textfont3=\tenex \scriptfont3=\tenex \scriptscriptfont3=\tenex
        \def\it{\fam\itfam\eightit}%
        \textfont\itfam=\eightit
        \def\sl{\fam\slfam\eightsl}%
        \textfont\slfam=\eightsl
        \def\bf{\fam\bffam\eightbf}%
        \textfont\bffam=\eightbf \scriptfont\bffam=\sixbf
                \scriptscriptfont\bffam=\fivebf
        \def\tt{\fam\ttfam\eighttt}%
        \textfont\ttfam=\eighttt
        \normalbaselineskip=9pt%
        \let\sc=\sixrm  % Small caps
        \setbox\strutbox=\hbox{\vrule height7pt depth2pt width0pt}%
        \normalbaselines\rm}
\def\sevenpoint{\def\rm{\fam0\sevenrm}%
        \textfont0=\sevenrm \scriptfont0=\fiverm \scriptscriptfont0=\fiverm
        \textfont1=\seveni \scriptfont1=\fivei \scriptscriptfont1=\fivei
        \textfont2=\sevensy \scriptfont2=\fivesy \scriptscriptfont2=\fivesy
        \textfont3=\tenex \scriptfont3=\tenex \scriptscriptfont3=\tenex
        \def\it{\fam\itfam\sevenit}%
        \textfont\itfam=\sevenit
        \def\sl{\fam\slfam\sevensl}%
        \textfont\slfam=\sevensl
        \def\bf{\fam\bffam\sevenbf}%
        \textfont\bffam=\sevenbf \scriptfont\bffam=\fivebf
                \scriptscriptfont\bffam=\fivebf
        \def\tt{\fam\ttfam\seventt}%
        \textfont\ttfam=\seventt
        \normalbaselineskip=8pt%
        \let\sc=\fiverm  % Small caps
        \setbox\strutbox=\hbox{\vrule height6pt depth2pt width0pt}%
        \normalbaselines\rm}

\input pictex
\font\smallsymbol = cmmi8
\newdimen\xfiglen \newdimen\yfiglen
\xfiglen=2.6 true in
\yfiglen=1.6 true in
\newbox\figurelegendone
\newbox\figurelegendtwo
\newbox\figurelegendthree
\newbox\figureone
\newbox\figureoneb
\newbox\figuretwo
\newbox\figurethree 
\newbox\figurefour
\newbox\figurefive
\newbox\figuresix
\newbox\figureseven
\newbox\figureeight
\newbox\figurenine
\newbox\figureten
%%%%%%%%%%%%%%%%%%%

\setbox\figurelegendone=\hbox{
\small
\beginpicture
  \setcoordinatesystem units <0.3\xfiglen,0.5\yfiglen> %point at 0 -0.7
  \setplotarea x from 0 to 0.8, y from 0 to 0.4
\footnotesize
\linethickness=0.8pt
\eightrm
\setsolid
  \put {sv$(A)$ \Red{$\,\star\,\star$}} [l] at 0.3 0.3
  \put {sv$(Q)$ \Blue{$\,\circ\,\circ$}} [l] at 0.3 0.1
\endpicture
\relax
}
\setbox\figurelegendtwo=\hbox{
\small
\beginpicture
  \setcoordinatesystem units <0.3\xfiglen,0.45\yfiglen> %point at 0 -0.7
  \setplotarea x from 0 to 0.8, y from 0 to 0.7
\ninerm
\linethickness=0.8pt
  \put {$\Red{\circ}$\quad Frozen Newton} [l] at 0 0.6
  \put {$\OliveGreen{\bullet}$\quad Newton} [l] at 0 0.4
  \put {$\Blue{\diamond}$\quad Frozen Halley} [l] at 0 0.2
  \put {$\Black{\star}$\quad Halley} [l] at 0 0.0
\endpicture
\relax
}
\setbox\figurelegendthree=\hbox{
\small
\beginpicture
  \setcoordinatesystem units <0.3\xfiglen,0.5\yfiglen> %point at 0 -0.7
  \setplotarea x from 0 to 0.8, y from 0 to 0.7
\linethickness=0.7pt
\ninerm
  \Red{\relax
  \putrule from 0 0.0 to 0.25 0.0 }\relax
  \put {iteration 10} [l] at 0.35 0.0
  \putrule from 0 0.2 to 0.25 0.2  %\relax
  \OliveGreen{\relax
  \putrule from 0 0.2 to 0.25 0.2 }\relax
  \put {iteration 3} [l] at 0.35 0.2
  \Blue{\relax
   \putrule from 0 0.4 to 0.25 0.4 }\relax
  \putrule from 0 0.4 to 0.2 0.4
  \put {iteration 1} [l] at 0.35 0.4
\setsolid
  \putrule from 0 0.6 to 0.25 0.6
  \put {actual $q$} [l] at 0.35 0.6
\endpicture
\relax
}
\setbox\figureone=\vbox{\hsize=\xfiglen
% $a(x) =   1 + 4x^2(1-x) + 0.5\sin(4*\pi*x)$
% $q(x) =   8x\,e^{-3x}$
% M=41 basis elements in parallel scheme - 1\% noise
\beginpicture
\eightrm
  \setcoordinatesystem units <0.033\xfiglen,0.15\yfiglen>  point at 0 -4
  \setplotarea x from 0 to 31, y from -4 to 3
  \axis bottom shiftedto y=-4 ticks short numbered from 5 to 30 by 5 /
  \axis left ticks short  from -4 to 3 by 1 /
  \put {{\eightrm 3}} [r] at -0.8 3
  \put {{\eightrm 2}} [r] at -0.8 2
  \put {{\eightrm 1}} [r] at -0.8 1
  \put {{\eightrm 0}} [r] at -0.8 0
  \put {{\eightrm -1}} [r] at -0.8 -1
  \put {{\eightrm -2}} [r] at -0.8 -2
  \put {{\eightrm -3}} [r] at -0.8 -3
  \put {{\eightrm -4}} [r] at -0.8 -4
\small
%\put {{\bf Singular values of $A$, $Q$}} [t] at 14.5 -5
\eightrm
\put {\copy\figurelegendone} [lb] at 1 -3
%\linethickness=0.8pt
%\setplotsymbol ({\eightrm $\star$})
\multiput {\Red{\relax$\star$}\relax} at
    1.0000    2.9111
    2.0000    2.9081
    3.0000    2.8013
    4.0000    2.7962
    5.0000    2.6789
    6.0000    2.6719
    7.0000    2.5422
    8.0000    2.5335
    9.0000    2.3885
   10.0000    2.3789
   11.0000    2.2147
   12.0000    2.2073
   13.0000    2.0218
   14.0000    2.0214
   15.0000    1.8577
   16.0000    1.7827
   17.0000    1.6265
   18.0000    1.4801
   19.0000    1.3035
   20.0000    1.1207
   21.0000    0.9204
   22.0000    0.7103
   23.0000    0.4869
   24.0000    0.2526
   25.0000    0.0065
   26.0000   -0.2506
   27.0000   -0.5191
   28.0000   -0.7990
   29.0000   -1.0903
   30.0000   -1.3933
   31.0000   -1.7084
 /
\multiput {\Blue{\relax$\circ$}\relax} at
    1.0000    1.7518
    2.0000    1.6811
    3.0000    1.6065
    4.0000    1.5281
    5.0000    1.4459
    6.0000    1.3593
    7.0000    1.2682
    8.0000    1.1719
    9.0000    1.0696
   10.0000    0.9601
   11.0000    0.8420
   12.0000    0.7142
   13.0000    0.5757
   14.0000    0.4263
   15.0000    0.2659
   16.0000    0.0947
   17.0000   -0.0874
   18.0000   -0.2800
   19.0000   -0.4832
   20.0000   -0.6967
   21.0000   -0.9206
   22.0000   -1.1547
   23.0000   -1.3990
   24.0000   -1.6535
   25.0000   -1.9183
   26.0000   -2.1934
   27.0000   -2.4789
   28.0000   -2.7750
   29.0000   -3.0819
   30.0000   -3.4000
   31.0000   -3.7295
 /
\endpicture
}
\setbox\figureoneb=\vbox{\hsize=\xfiglen
\beginpicture
\eightrm
  \setcoordinatesystem units <0.9\xfiglen,0.007\yfiglen>  point at 0 -60
  \setplotarea x from 0 to 1, y from -60 to 90
  \axis bottom shiftedto y=0 ticks short numbered from 1 to 1 by 0.2 /
  \axis left ticks short from -60 to 90 by 30 /
  \put {{\eightrm -60}} [r] at -0.02 -60
  \put {{\eightrm -30}} [r] at -0.02 -30
  \put {{\eightrm 0}} [r] at -0.02 0
  \put {{\eightrm 30}} [r] at -0.02 30
  \put {{\eightrm 60}} [r] at -0.02 60
  \put {{\eightrm 90}} [r] at -0.02 90
\setsolid
\small
\Blue{\relax \putrule from 0.06 82 to 0.15 82 \relax}\relax
\put {{\footnotesize $W$}} [l] at 0.17 82
\setdashes <3pt>
\Red{\relax \putrule from 0.06 67 to 0.15 67 \relax}\relax
\put {{\footnotesize $W'$}} [l] at 0.17 67
\setsolid
\Blue{\relax 
\plot
        0    1.5646
    0.0200    1.5547
    0.0400    1.5163
    0.0600    1.4404
    0.0800    1.3183
    0.1000    1.1419
    0.1200    0.9049
    0.1400    0.6021
    0.1600    0.2288
    0.1800   -0.2202
    0.2000   -0.7486
    0.2200   -1.3567
    0.2400   -2.0405
    0.2600   -2.7936
    0.2800   -3.6108
    0.3000   -4.4885
    0.3200   -5.4241
    0.3400   -6.4128
    0.3600   -7.4460
    0.3800   -8.5117
    0.4000   -9.5949
    0.4200  -10.6795
    0.4400  -11.7494
    0.4600  -12.7903
    0.4800  -13.7904
    0.5000  -14.7388
    0.5200  -15.6237
    0.5400  -16.4320
    0.5600  -17.1505
    0.5800  -17.7671
    0.6000  -18.2708
    0.6200  -18.6511
    0.6400  -18.8982
    0.6600  -19.0032
    0.6800  -18.9601
    0.7000  -18.7649
    0.7200  -18.4155
    0.7400  -17.9108
    0.7600  -17.2512
    0.7800  -16.4388
    0.8000  -15.4773
    0.8200  -14.3724
    0.8400  -13.1321
    0.8600  -11.7676
    0.8800  -10.2915
    0.9000   -8.7177
    0.9200   -7.0615
    0.9400   -5.3419
    0.9600   -3.5796
    0.9800   -1.7943
    1.0000   -0.0000
 /\relax}\relax
\setdashes <4pt>
\Red{\relax 
\plot
         0         0
    0.0200   -1.2284
    0.0400   -2.9053
    0.0600   -5.0088
    0.0800   -7.5092
    0.1000  -10.3688
    0.1200  -13.5414
    0.1400  -16.9723
    0.1600  -20.6003
    0.1800  -24.3593
    0.2000  -28.1791
    0.2200  -31.9874
    0.2400  -35.7113
    0.2600  -39.2787
    0.2800  -42.6162
    0.3000  -45.6476
    0.3200  -48.2965
    0.3400  -50.4915
    0.3600  -52.1711
    0.3800  -53.2840
    0.4000  -53.7866
    0.4200  -53.6431
    0.4400  -52.8274
    0.4600  -51.3237
    0.4800  -49.1236
    0.5000  -46.2256
    0.5200  -42.6385
    0.5400  -38.3856
    0.5600  -33.5035
    0.5800  -28.0358
    0.6000  -22.0294
    0.6200  -15.5390
    0.6400   -8.6312
    0.6600   -1.3838
    0.6800    6.1219
    0.7000   13.8048
    0.7200   21.5786
    0.7400   29.3478
    0.7600   37.0113
    0.7800   44.4728
    0.8000   51.6454
    0.8200   58.4457
    0.8400   64.7861
    0.8600   70.5768
    0.8800   75.7357
    0.9000   80.1965
    0.9200   83.9075
    0.9400   86.8273
    0.9600   88.9261
    0.9800   90.1878
    1.0000   90.6084
 /\relax}\relax
\endpicture
}
\setbox\figuretwo=\vbox{\hsize=\xfiglen
% $a(x) =   1 + 4x^2(1-x) + 0.5\sin(4*\pi*x)$
% $q(x) =   2x\,e^{-3x}$
% M=41 basis elements in parallel scheme - 1\% noise
% This is for the $a$ reconstruction
\beginpicture
\eightrm
  \setcoordinatesystem units <\xfiglen,0.75\yfiglen>  point at 0 0.5
  \setplotarea x from 0 to 1, y from 0.5 to 2.2
  \axis bottom shiftedto y=0.5 ticks short numbered from 0  to 1 by 0.2 /
  \axis left ticks short numbered from 0.5 to 2 by 0.5 /
\put {{\small $a(x)$}} [lt] at 0.02 2.2
%\put {\copy\figurelegendtwo} [rt] at 9.9 0.97
\linethickness=0.8pt
\setsolid
\plot  % a_act_func()
    0.00    1.0000
    0.01    1.0631
    0.02    1.1259
    0.03    1.1876
    0.04    1.2470
    0.05    1.3034
    0.06    1.3558
    0.07    1.4035
    0.08    1.4457
    0.09    1.4819
    0.10    1.5115
    0.11    1.5342
    0.12    1.5497
    0.13    1.5578
    0.14    1.5586
    0.15    1.5520
    0.16    1.5384
    0.17    1.5181
    0.18    1.4915
    0.19    1.4592
    0.20    1.4219
    0.21    1.3802
    0.22    1.3351
    0.23    1.2873
    0.24    1.2378
    0.25    1.1875
    0.26    1.1374
    0.27    1.0885
    0.28    1.0417
    0.29    0.9980
    0.30    0.9581
    0.31    0.9230
    0.32    0.8933
    0.33    0.8697
    0.34    0.8528
    0.35    0.8430
    0.36    0.8406
    0.37    0.8460
    0.38    0.8591
    0.39    0.8800
    0.40    0.9085
    0.41    0.9443
    0.42    0.9871
    0.43    1.0363
    0.44    1.0914
    0.45    1.1516
    0.46    1.2162
    0.47    1.2842
    0.48    1.3549
    0.49    1.4271
    0.50    1.5000
    0.51    1.5725
    0.52    1.6435
    0.53    1.7122
    0.54    1.7774
    0.55    1.8384
    0.56    1.8942
    0.57    1.9441
    0.58    1.9873
    0.59    2.0233
    0.60    2.0515
    0.61    2.0716
    0.62    2.0833
    0.63    2.0864
    0.64    2.0810
    0.65    2.0670
    0.66    2.0448
    0.67    2.0147
    0.68    1.9771
    0.69    1.9326
    0.70    1.8819
    0.71    1.8256
    0.72    1.7647
    0.73    1.6999
    0.74    1.6322
    0.75    1.5625
    0.76    1.4918
    0.77    1.4211
    0.78    1.3513
    0.79    1.2834
    0.80    1.2181
    0.81    1.1564
    0.82    1.0989
    0.83    1.0463
    0.84    0.9992
    0.85    0.9580
    0.86    0.9230
    0.87    0.8946
    0.88    0.8727
    0.89    0.8574
    0.90    0.8485
    0.91    0.8457
    0.92    0.8487
    0.93    0.8569
    0.94    0.8698
    0.95    0.8866
    0.96    0.9066
    0.97    0.9288
    0.98    0.9525
    0.99    0.9765
    1.00    1.0000
 /
\setlinear
\setdots <2pt>
\Red{\relax
\plot
         0    1.0073
    0.0100    1.0705
    0.0200    1.1335
    0.0300    1.1952
    0.0400    1.2548
    0.0500    1.3111
    0.0600    1.3634
    0.0700    1.4110
    0.0800    1.4531
    0.0900    1.4890
    0.1000    1.5183
    0.1100    1.5407
    0.1200    1.5558
    0.1300    1.5635
    0.1400    1.5639
    0.1500    1.5569
    0.1600    1.5428
    0.1700    1.5220
    0.1800    1.4950
    0.1900    1.4622
    0.2000    1.4244
    0.2100    1.3823
    0.2200    1.3366
    0.2300    1.2884
    0.2400    1.2384
    0.2500    1.1877
    0.2600    1.1371
    0.2700    1.0877
    0.2800    1.0404
    0.2900    0.9962
    0.3000    0.9559
    0.3100    0.9204
    0.3200    0.8903
    0.3300    0.8664
    0.3400    0.8492
    0.3500    0.8391
    0.3600    0.8366
    0.3700    0.8417
    0.3800    0.8546
    0.3900    0.8753
    0.4000    0.9036
    0.4100    0.9392
    0.4200    0.9817
    0.4300    1.0307
    0.4400    1.0855
    0.4500    1.1455
    0.4600    1.2098
    0.4700    1.2777
    0.4800    1.3482
    0.4900    1.4204
    0.5000    1.4932
    0.5100    1.5657
    0.5200    1.6369
    0.5300    1.7057
    0.5400    1.7711
    0.5500    1.8322
    0.5600    1.8883
    0.5700    1.9384
    0.5800    1.9819
    0.5900    2.0181
    0.6000    2.0466
    0.6100    2.0670
    0.6200    2.0790
    0.6300    2.0825
    0.6400    2.0773
    0.6500    2.0637
    0.6600    2.0419
    0.6700    2.0121
    0.6800    1.9750
    0.6900    1.9309
    0.7000    1.8805
    0.7100    1.8247
    0.7200    1.7642
    0.7300    1.6998
    0.7400    1.6326
    0.7500    1.5634
    0.7600    1.4932
    0.7700    1.4230
    0.7800    1.3538
    0.7900    1.2863
    0.8000    1.2216
    0.8100    1.1604
    0.8200    1.1034
    0.8300    1.0513
    0.8400    1.0047
    0.8500    0.9640
    0.8600    0.9294
    0.8700    0.9014
    0.8800    0.8799
    0.8900    0.8649
    0.9000    0.8563
    0.9100    0.8538
    0.9200    0.8570
    0.9300    0.8655
    0.9400    0.8786
    0.9500    0.8957
    0.9600    0.9158
    0.9700    0.9383
    0.9800    0.9621
    0.9900    0.9861
    1.0000    1.0083
 /\relax}\relax
\setdashes <5pt>
\Blue{\relax % iteration 5
\plot
        0    1.0065
    0.0100    1.0697
    0.0200    1.1328
    0.0300    1.1946
    0.0400    1.2542
    0.0500    1.3107
    0.0600    1.3632
    0.0700    1.4109
    0.0800    1.4531
    0.0900    1.4892
    0.1000    1.5187
    0.1100    1.5412
    0.1200    1.5565
    0.1300    1.5644
    0.1400    1.5649
    0.1500    1.5581
    0.1600    1.5443
    0.1700    1.5237
    0.1800    1.4968
    0.1900    1.4643
    0.2000    1.4266
    0.2100    1.3847
    0.2200    1.3393
    0.2300    1.2912
    0.2400    1.2415
    0.2500    1.1909
    0.2600    1.1406
    0.2700    1.0914
    0.2800    1.0443
    0.2900    1.0003
    0.3000    0.9602
    0.3100    0.9249
    0.3200    0.8950
    0.3300    0.8713
    0.3400    0.8542
    0.3500    0.8444
    0.3600    0.8420
    0.3700    0.8473
    0.3800    0.8604
    0.3900    0.8813
    0.4000    0.9098
    0.4100    0.9456
    0.4200    0.9884
    0.4300    1.0376
    0.4400    1.0926
    0.4500    1.1528
    0.4600    1.2174
    0.4700    1.2855
    0.4800    1.3563
    0.4900    1.4286
    0.5000    1.5017
    0.5100    1.5743
    0.5200    1.6456
    0.5300    1.7145
    0.5400    1.7800
    0.5500    1.8412
    0.5600    1.8972
    0.5700    1.9473
    0.5800    1.9908
    0.5900    2.0269
    0.6000    2.0553
    0.6100    2.0755
    0.6200    2.0873
    0.6300    2.0905
    0.6400    2.0850
    0.6500    2.0711
    0.6600    2.0489
    0.6700    2.0187
    0.6800    1.9811
    0.6900    1.9365
    0.7000    1.8857
    0.7100    1.8293
    0.7200    1.7683
    0.7300    1.7034
    0.7400    1.6356
    0.7500    1.5658
    0.7600    1.4950
    0.7700    1.4242
    0.7800    1.3544
    0.7900    1.2864
    0.8000    1.2211
    0.8100    1.1593
    0.8200    1.1018
    0.8300    1.0492
    0.8400    1.0021
    0.8500    0.9609
    0.8600    0.9259
    0.8700    0.8975
    0.8800    0.8756
    0.8900    0.8603
    0.9000    0.8514
    0.9100    0.8486
    0.9200    0.8516
    0.9300    0.8599
    0.9400    0.8728
    0.9500    0.8898
    0.9600    0.9098
    0.9700    0.9323
    0.9800    0.9562
    0.9900    0.9805
    1.0000    1.0035
 /\relax}\relax
\endpicture}
%%%%%
%\xfiglen=2.5 true in
%\yfiglen=2.0 true in
\setbox\figurethree=\vbox{\hsize=\xfiglen
% $a(x) =   1 + 4x^2(1-x) + 0.5\sin(4*\pi*x)$
% $q(x) =   2x\,e^{-3x}$
% q-reconstruction -
%
\beginpicture
\eightrm
  \setcoordinatesystem units <\xfiglen,0.8\yfiglen>  point at 0 -0.25
  \setplotarea x from 0 to 1, y from -0.25 to 1
  \axis bottom shiftedto y=-0.25 ticks short numbered from 0 to 1 by 0.2 /
  \axis left ticks short numbered from 0 to 1 by 0.5 /
\put {{\small $q(x)$}} [lt] at 0.02 1
\setsolid % actual $q$
\plot
         0         0
    0.0100    0.0776
    0.0200    0.1507
    0.0300    0.2193
    0.0400    0.2838
    0.0500    0.3443
    0.0600    0.4009
    0.0700    0.4539
    0.0800    0.5034
    0.0900    0.5496
    0.1000    0.5927
    0.1100    0.6327
    0.1200    0.6698
    0.1300    0.7041
    0.1400    0.7359
    0.1500    0.7652
    0.1600    0.7920
    0.1700    0.8167
    0.1800    0.8392
    0.1900    0.8596
    0.2000    0.8781
    0.2100    0.8948
    0.2200    0.9097
    0.2300    0.9229
    0.2400    0.9346
    0.2500    0.9447
    0.2600    0.9535
    0.2700    0.9609
    0.2800    0.9670
    0.2900    0.9720
    0.3000    0.9758
    0.3100    0.9785
    0.3200    0.9802
    0.3300    0.9810
    0.3400    0.9808
    0.3500    0.9798
    0.3600    0.9780
    0.3700    0.9755
    0.3800    0.9722
    0.3900    0.9683
    0.4000    0.9638
    0.4100    0.9587
    0.4200    0.9531
    0.4300    0.9469
    0.4400    0.9403
    0.4500    0.9333
    0.4600    0.9258
    0.4700    0.9180
    0.4800    0.9098
    0.4900    0.9013
    0.5000    0.8925
    0.5100    0.8835
    0.5200    0.8742
    0.5300    0.8646
    0.5400    0.8549
    0.5500    0.8450
    0.5600    0.8350
    0.5700    0.8247
    0.5800    0.8144
    0.5900    0.8040
    0.6000    0.7934
    0.6100    0.7828
    0.6200    0.7721
    0.6300    0.7614
    0.6400    0.7506
    0.6500    0.7398
    0.6600    0.7290
    0.6700    0.7182
    0.6800    0.7074
    0.6900    0.6965
    0.7000    0.6858
    0.7100    0.6750
    0.7200    0.6643
    0.7300    0.6536
    0.7400    0.6430
    0.7500    0.6324
    0.7600    0.6219
    0.7700    0.6114
    0.7800    0.6011
    0.7900    0.5908
    0.8000    0.5806
    0.8100    0.5705
    0.8200    0.5605
    0.8300    0.5505
    0.8400    0.5407
    0.8500    0.5310
    0.8600    0.5213
    0.8700    0.5118
    0.8800    0.5024
    0.8900    0.4931
    0.9000    0.4839
    0.9100    0.4748
    0.9200    0.4658
    0.9300    0.4570
    0.9400    0.4482
    0.9500    0.4396
    0.9600    0.4311
    0.9700    0.4227
    0.9800    0.4145
    0.9900    0.4063
    1.0000    0.3983
/
\setdots <2pt>
\Red{\relax % iteration 1 for $q(x)$
\plot
    0.00     -0.1668
    0.0100   -0.1420
    0.0200   -0.1173
    0.0300   -0.0926
    0.0400   -0.0679
    0.0500   -0.0431
    0.0600   -0.0184
    0.0700    0.0064
    0.0800    0.0313
    0.0900    0.0562
    0.1000    0.0812
    0.1100    0.1062
    0.1200    0.1312
    0.1300    0.1563
    0.1400    0.1813
    0.1500    0.2064
    0.1600    0.2313
    0.1700    0.2562
    0.1800    0.2809
    0.1900    0.3053
    0.2000    0.3295
    0.2100    0.3533
    0.2200    0.3766
    0.2300    0.3994
    0.2400    0.4214
    0.2500    0.4426
    0.2600    0.4629
    0.2700    0.4821
    0.2800    0.5000
    0.2900    0.5165
    0.3000    0.5314
    0.3100    0.5447
    0.3200    0.5562
    0.3300    0.5658
    0.3400    0.5733
    0.3500    0.5789
    0.3600    0.5824
    0.3700    0.5838
    0.3800    0.5832
    0.3900    0.5808
    0.4000    0.5766
    0.4100    0.5709
    0.4200    0.5639
    0.4300    0.5557
    0.4400    0.5465
    0.4500    0.5365
    0.4600    0.5258
    0.4700    0.5145
    0.4800    0.5024
    0.4900    0.4896
    0.5000    0.4760
    0.5100    0.4616
    0.5200    0.4463
    0.5300    0.4300
    0.5400    0.4127
    0.5500    0.3945
    0.5600    0.3754
    0.5700    0.3555
    0.5800    0.3349
    0.5900    0.3137
    0.6000    0.2921
    0.6100    0.2702
    0.6200    0.2483
    0.6300    0.2264
    0.6400    0.2047
    0.6500    0.1833
    0.6600    0.1623
    0.6700    0.1418
    0.6800    0.1217
    0.6900    0.1021
    0.7000    0.0829
    0.7100    0.0642
    0.7200    0.0458
    0.7300    0.0279
    0.7400    0.0103
    0.7500   -0.0071
    0.7600   -0.0242
    0.7700   -0.0414
    0.7800   -0.0585
    0.7900   -0.0757
    0.8000   -0.0929
    0.8100   -0.1103
    0.8200   -0.1279
    0.8300   -0.1454
    0.8400   -0.1628
    0.8500   -0.1798
    0.8600   -0.1963
    0.8700   -0.2124
    0.8800   -0.2278
    0.8900   -0.2427
    0.9000   -0.2567
    0.9100   -0.2699
    0.9200   -0.2823
    0.9300   -0.2939
    0.9400   -0.3050
    0.9500   -0.3156
    0.9600   -0.3257
    0.9700   -0.3349
    0.9800   -0.3425
    0.9900   -0.3477
    1.0000   -0.3492
/\relax}\relax
%
%\setquadratic
\setdashes <4pt>
\Blue{\relax % 5th iteration for $q(x)$
\plot
         0    0.3549
    0.0100    0.3771
    0.0200    0.3993
    0.0300    0.4215
    0.0400    0.4437
    0.0500    0.4658
    0.0600    0.4879
    0.0700    0.5100
    0.0800    0.5319
    0.0900    0.5538
    0.1000    0.5755
    0.1100    0.5970
    0.1200    0.6184
    0.1300    0.6396
    0.1400    0.6605
    0.1500    0.6811
    0.1600    0.7014
    0.1700    0.7213
    0.1800    0.7409
    0.1900    0.7599
    0.2000    0.7785
    0.2100    0.7965
    0.2200    0.8140
    0.2300    0.8307
    0.2400    0.8467
    0.2500    0.8619
    0.2600    0.8762
    0.2700    0.8896
    0.2800    0.9019
    0.2900    0.9131
    0.3000    0.9231
    0.3100    0.9318
    0.3200    0.9392
    0.3300    0.9452
    0.3400    0.9498
    0.3500    0.9530
    0.3600    0.9547
    0.3700    0.9550
    0.3800    0.9539
    0.3900    0.9515
    0.4000    0.9481
    0.4100    0.9438
    0.4200    0.9387
    0.4300    0.9331
    0.4400    0.9272
    0.4500    0.9210
    0.4600    0.9146
    0.4700    0.9082
    0.4800    0.9016
    0.4900    0.8948
    0.5000    0.8878
    0.5100    0.8805
    0.5200    0.8728
    0.5300    0.8646
    0.5400    0.8559
    0.5500    0.8466
    0.5600    0.8369
    0.5700    0.8267
    0.5800    0.8161
    0.5900    0.8051
    0.6000    0.7940
    0.6100    0.7828
    0.6200    0.7717
    0.6300    0.7607
    0.6400    0.7499
    0.6500    0.7393
    0.6600    0.7291
    0.6700    0.7191
    0.6800    0.7095
    0.6900    0.7002
    0.7000    0.6910
    0.7100    0.6819
    0.7200    0.6730
    0.7300    0.6641
    0.7400    0.6552
    0.7500    0.6462
    0.7600    0.6370
    0.7700    0.6276
    0.7800    0.6178
    0.7900    0.6076
    0.8000    0.5968
    0.8100    0.5855
    0.8200    0.5737
    0.8300    0.5616
    0.8400    0.5493
    0.8500    0.5369
    0.8600    0.5247
    0.8700    0.5126
    0.8800    0.5008
    0.8900    0.4895
    0.9000    0.4787
    0.9100    0.4684
    0.9200    0.4586
    0.9300    0.4491
    0.9400    0.4400
    0.9500    0.4313
    0.9600    0.4230
    0.9700    0.4149
    0.9800    0.4068
    0.9900    0.3986
    1.0000    0.3908
/\relax}\relax
\endpicture
}
\setbox\figurefour=\vbox{\hsize=\xfiglen
% $a(x) =   1 + 4x^2(1-x) + 0.5\sin(4*\pi*x)$
% $q(x) =   2x\,e^{-3x}$
% eliminate $a$ then $q$
% This is for the $a$ reconstruction
\beginpicture
\eightrm
  \setcoordinatesystem units <\xfiglen,0.7\yfiglen>  point at 0 0.5
  \setplotarea x from 0 to 1, y from 0.5 to 2.2
  \axis bottom shiftedto y=0.5 ticks short numbered from 0  to 1 by 0.2 /
  \axis left ticks short numbered from 0.5 to 2 by 0.5 /
\put {{\small $a(x)$}} [lt] at 0.02 2.2
%\put {\copy\figurelegendtwo} [rt] at 9.9 0.97
\linethickness=0.8pt
\setsolid
\plot  % a_act_func()
    0.00    1.0000
    0.01    1.0631
    0.02    1.1259
    0.03    1.1876
    0.04    1.2470
    0.05    1.3034
    0.06    1.3558
    0.07    1.4035
    0.08    1.4457
    0.09    1.4819
    0.10    1.5115
    0.11    1.5342
    0.12    1.5497
    0.13    1.5578
    0.14    1.5586
    0.15    1.5520
    0.16    1.5384
    0.17    1.5181
    0.18    1.4915
    0.19    1.4592
    0.20    1.4219
    0.21    1.3802
    0.22    1.3351
    0.23    1.2873
    0.24    1.2378
    0.25    1.1875
    0.26    1.1374
    0.27    1.0885
    0.28    1.0417
    0.29    0.9980
    0.30    0.9581
    0.31    0.9230
    0.32    0.8933
    0.33    0.8697
    0.34    0.8528
    0.35    0.8430
    0.36    0.8406
    0.37    0.8460
    0.38    0.8591
    0.39    0.8800
    0.40    0.9085
    0.41    0.9443
    0.42    0.9871
    0.43    1.0363
    0.44    1.0914
    0.45    1.1516
    0.46    1.2162
    0.47    1.2842
    0.48    1.3549
    0.49    1.4271
    0.50    1.5000
    0.51    1.5725
    0.52    1.6435
    0.53    1.7122
    0.54    1.7774
    0.55    1.8384
    0.56    1.8942
    0.57    1.9441
    0.58    1.9873
    0.59    2.0233
    0.60    2.0515
    0.61    2.0716
    0.62    2.0833
    0.63    2.0864
    0.64    2.0810
    0.65    2.0670
    0.66    2.0448
    0.67    2.0147
    0.68    1.9771
    0.69    1.9326
    0.70    1.8819
    0.71    1.8256
    0.72    1.7647
    0.73    1.6999
    0.74    1.6322
    0.75    1.5625
    0.76    1.4918
    0.77    1.4211
    0.78    1.3513
    0.79    1.2834
    0.80    1.2181
    0.81    1.1564
    0.82    1.0989
    0.83    1.0463
    0.84    0.9992
    0.85    0.9580
    0.86    0.9230
    0.87    0.8946
    0.88    0.8727
    0.89    0.8574
    0.90    0.8485
    0.91    0.8457
    0.92    0.8487
    0.93    0.8569
    0.94    0.8698
    0.95    0.8866
    0.96    0.9066
    0.97    0.9288
    0.98    0.9525
    0.99    0.9765
    1.00    1.0000
 /
\setlinear
\setdots <2pt>
\Red{\relax
\plot
         0    1.0311
    0.0100    1.0939
    0.0200    1.1565
    0.0300    1.2178
    0.0400    1.2767
    0.0500    1.3325
    0.0600    1.3842
    0.0700    1.4311
    0.0800    1.4726
    0.0900    1.5082
    0.1000    1.5374
    0.1100    1.5596
    0.1200    1.5746
    0.1300    1.5821
    0.1400    1.5817
    0.1500    1.5736
    0.1600    1.5578
    0.1700    1.5348
    0.1800    1.5051
    0.1900    1.4696
    0.2000    1.4293
    0.2100    1.3852
    0.2200    1.3382
    0.2300    1.2893
    0.2400    1.2396
    0.2500    1.1897
    0.2600    1.1406
    0.2700    1.0929
    0.2800    1.0474
    0.2900    1.0049
    0.3000    0.9660
    0.3100    0.9315
    0.3200    0.9023
    0.3300    0.8788
    0.3400    0.8618
    0.3500    0.8517
    0.3600    0.8490
    0.3700    0.8538
    0.3800    0.8664
    0.3900    0.8867
    0.4000    0.9147
    0.4100    0.9501
    0.4200    0.9924
    0.4300    1.0414
    0.4400    1.0963
    0.4500    1.1565
    0.4600    1.2211
    0.4700    1.2894
    0.4800    1.3603
    0.4900    1.4329
    0.5000    1.5061
    0.5100    1.5789
    0.5200    1.6503
    0.5300    1.7193
    0.5400    1.7849
    0.5500    1.8462
    0.5600    1.9023
    0.5700    1.9525
    0.5800    1.9960
    0.5900    2.0323
    0.6000    2.0608
    0.6100    2.0811
    0.6200    2.0930
    0.6300    2.0963
    0.6400    2.0911
    0.6500    2.0775
    0.6600    2.0556
    0.6700    2.0259
    0.6800    1.9888
    0.6900    1.9449
    0.7000    1.8947
    0.7100    1.8390
    0.7200    1.7787
    0.7300    1.7146
    0.7400    1.6477
    0.7500    1.5789
    0.7600    1.5092
    0.7700    1.4395
    0.7800    1.3710
    0.7900    1.3044
    0.8000    1.2407
    0.8100    1.1807
    0.8200    1.1252
    0.8300    1.0749
    0.8400    1.0302
    0.8500    0.9918
    0.8600    0.9601
    0.8700    0.9352
    0.8800    0.9175
    0.8900    0.9071
    0.9000    0.9040
    0.9100    0.9084
    0.9200    0.9200
    0.9300    0.9388
    0.9400    0.9647
    0.9500    0.9977
    0.9600    1.0377
    0.9700    1.0847
    0.9800    1.1384
    0.9900    1.1990
    1.0000    1.2593
 /\relax}\relax
\setdashes <5pt>
\Blue{\relax % iteration 5
\plot
         0    1.0230
    0.0100    1.0859
    0.0200    1.1484
    0.0300    1.2096
    0.0400    1.2686
    0.0500    1.3242
    0.0600    1.3758
    0.0700    1.4227
    0.0800    1.4641
    0.0900    1.4996
    0.1000    1.5286
    0.1100    1.5507
    0.1200    1.5656
    0.1300    1.5729
    0.1400    1.5724
    0.1500    1.5641
    0.1600    1.5481
    0.1700    1.5249
    0.1800    1.4952
    0.1900    1.4596
    0.2000    1.4191
    0.2100    1.3748
    0.2200    1.3277
    0.2300    1.2788
    0.2400    1.2289
    0.2500    1.1790
    0.2600    1.1298
    0.2700    1.0820
    0.2800    1.0365
    0.2900    0.9939
    0.3000    0.9551
    0.3100    0.9207
    0.3200    0.8915
    0.3300    0.8682
    0.3400    0.8513
    0.3500    0.8413
    0.3600    0.8388
    0.3700    0.8438
    0.3800    0.8566
    0.3900    0.8772
    0.4000    0.9054
    0.4100    0.9410
    0.4200    0.9837
    0.4300    1.0329
    0.4400    1.0881
    0.4500    1.1486
    0.4600    1.2135
    0.4700    1.2821
    0.4800    1.3533
    0.4900    1.4261
    0.5000    1.4995
    0.5100    1.5726
    0.5200    1.6442
    0.5300    1.7133
    0.5400    1.7790
    0.5500    1.8404
    0.5600    1.8965
    0.5700    1.9467
    0.5800    1.9902
    0.5900    2.0263
    0.6000    2.0546
    0.6100    2.0747
    0.6200    2.0863
    0.6300    2.0892
    0.6400    2.0836
    0.6500    2.0695
    0.6600    2.0472
    0.6700    2.0169
    0.6800    1.9791
    0.6900    1.9344
    0.7000    1.8834
    0.7100    1.8269
    0.7200    1.7657
    0.7300    1.7007
    0.7400    1.6327
    0.7500    1.5628
    0.7600    1.4919
    0.7700    1.4210
    0.7800    1.3510
    0.7900    1.2830
    0.8000    1.2178
    0.8100    1.1562
    0.8200    1.0989
    0.8300    1.0465
    0.8400    0.9998
    0.8500    0.9590
    0.8600    0.9245
    0.8700    0.8967
    0.8800    0.8755
    0.8900    0.8610
    0.9000    0.8531
    0.9100    0.8516
    0.9200    0.8564
    0.9300    0.8674
    0.9400    0.8846
    0.9500    0.9078
    0.9600    0.9370
    0.9700    0.9720
    0.9800    1.0129
    0.9900    1.0596
    1.0000    1.1065
 /\relax}\relax
\endpicture}
%%%%%
\setbox\figurefive=\vbox{\hsize=\xfiglen
% $a(x) =   1 + 4x^2(1-x) + 0.5\sin(4*\pi*x)$
% $q(x) =   2x\,e^{-3x}$
% eliminate $a$ then $q$
% q-reconstruction - 
%
\beginpicture
\eightrm
  \setcoordinatesystem units <\xfiglen,0.40\yfiglen>  point at 0 -0.25
  \setplotarea x from 0 to 1, y from -0.25 to 2.7
  \axis bottom shiftedto y=-0.25 ticks short numbered from 0 to 1 by 0.2 /
  \axis left ticks short numbered from 0 to 2.5 by 0.5 /
\put {{\small $q(x)$}} [lt] at 0.02 2.7
\setsolid % actual $q$
\plot
         0         0
    0.0100    0.0776
    0.0200    0.1507
    0.0300    0.2193
    0.0400    0.2838
    0.0500    0.3443
    0.0600    0.4009
    0.0700    0.4539
    0.0800    0.5034
    0.0900    0.5496
    0.1000    0.5927
    0.1100    0.6327
    0.1200    0.6698
    0.1300    0.7041
    0.1400    0.7359
    0.1500    0.7652
    0.1600    0.7920
    0.1700    0.8167
    0.1800    0.8392
    0.1900    0.8596
    0.2000    0.8781
    0.2100    0.8948
    0.2200    0.9097
    0.2300    0.9229
    0.2400    0.9346
    0.2500    0.9447
    0.2600    0.9535
    0.2700    0.9609
    0.2800    0.9670
    0.2900    0.9720
    0.3000    0.9758
    0.3100    0.9785
    0.3200    0.9802
    0.3300    0.9810
    0.3400    0.9808
    0.3500    0.9798
    0.3600    0.9780
    0.3700    0.9755
    0.3800    0.9722
    0.3900    0.9683
    0.4000    0.9638
    0.4100    0.9587
    0.4200    0.9531
    0.4300    0.9469
    0.4400    0.9403
    0.4500    0.9333
    0.4600    0.9258
    0.4700    0.9180
    0.4800    0.9098
    0.4900    0.9013
    0.5000    0.8925
    0.5100    0.8835
    0.5200    0.8742
    0.5300    0.8646
    0.5400    0.8549
    0.5500    0.8450
    0.5600    0.8350
    0.5700    0.8247
    0.5800    0.8144
    0.5900    0.8040
    0.6000    0.7934
    0.6100    0.7828
    0.6200    0.7721
    0.6300    0.7614
    0.6400    0.7506
    0.6500    0.7398
    0.6600    0.7290
    0.6700    0.7182
    0.6800    0.7074
    0.6900    0.6965
    0.7000    0.6858
    0.7100    0.6750
    0.7200    0.6643
    0.7300    0.6536
    0.7400    0.6430
    0.7500    0.6324
    0.7600    0.6219
    0.7700    0.6114
    0.7800    0.6011
    0.7900    0.5908
    0.8000    0.5806
    0.8100    0.5705
    0.8200    0.5605
    0.8300    0.5505
    0.8400    0.5407
    0.8500    0.5310
    0.8600    0.5213
    0.8700    0.5118
    0.8800    0.5024
    0.8900    0.4931
    0.9000    0.4839
    0.9100    0.4748
    0.9200    0.4658
    0.9300    0.4570
    0.9400    0.4482
    0.9500    0.4396
    0.9600    0.4311
    0.9700    0.4227
    0.9800    0.4145
    0.9900    0.4063
    1.0000    0.3983
/
\setdots <2pt>
\Red{\relax % iteration 1 for $q(x)$
\plot
         0   -0.4941
    0.0100   -0.4621
    0.0200   -0.4751
    0.0300   -0.5157
    0.0400   -0.5557
    0.0500   -0.5686
    0.0600   -0.5371
    0.0700   -0.4581
    0.0800   -0.2995
    0.0900   -0.0370
    0.1000    0.2905
    0.1100    0.6488
    0.1200    1.0086
    0.1300    1.3452
    0.1400    1.6390
    0.1500    1.8747
    0.1600    2.0421
    0.1700    2.1354
    0.1800    2.1533
    0.1900    2.0991
    0.2000    1.9803
    0.2100    1.8083
    0.2200    1.5984
    0.2300    1.3693
    0.2400    1.1429
    0.2500    0.9439
    0.2600    0.7993
    0.2700    0.7382
    0.2800    0.7248
    0.2900    0.6753
    0.3000    0.6059
    0.3100    0.5316
    0.3200    0.4633
    0.3300    0.4071
    0.3400    0.3651
    0.3500    0.3365
    0.3600    0.3193
    0.3700    0.3109
    0.3800    0.3095
    0.3900    0.3133
    0.4000    0.3215
    0.4100    0.3332
    0.4200    0.3475
    0.4300    0.3630
    0.4400    0.3780
    0.4500    0.3906
    0.4600    0.3989
    0.4700    0.4017
    0.4800    0.3983
    0.4900    0.3893
    0.5000    0.3757
    0.5100    0.3591
    0.5200    0.3409
    0.5300    0.3221
    0.5400    0.3032
    0.5500    0.2840
    0.5600    0.2642
    0.5700    0.2434
    0.5800    0.2219
    0.5900    0.2001
    0.6000    0.1790
    0.6100    0.1595
    0.6200    0.1422
    0.6300    0.1273
    0.6400    0.1145
    0.6500    0.1032
    0.6600    0.0925
    0.6700    0.0817
    0.6800    0.0706
    0.6900    0.0591
    0.7000    0.0474
    0.7100    0.0360
    0.7200    0.0252
    0.7300    0.0154
    0.7400    0.0067
    0.7500   -0.0011
    0.7600   -0.0082
    0.7700   -0.0149
    0.7800   -0.0215
    0.7900   -0.0284
    0.8000   -0.0358
    0.8100   -0.0440
    0.8200   -0.0529
    0.8300   -0.0626
    0.8400   -0.0730
    0.8500   -0.0836
    0.8600   -0.0943
    0.8700   -0.1047
    0.8800   -0.1146
    0.8900   -0.1239
    0.9000   -0.1327
    0.9100   -0.1431
    0.9200   -0.1563
    0.9300   -0.1736
    0.9400   -0.1978
    0.9500   -0.2331
    0.9600   -0.2849
    0.9700   -0.3595
    0.9800   -0.4640
    0.9900   -0.6063
    1.0000   -0.7945
/\relax}\relax
%
%\setquadratic
\setdashes <4pt>
\Blue{\relax % 5th iteration for $q(x)$
\plot
         0   -0.4886
    0.0100   -0.4143
    0.0200   -0.3841
    0.0300   -0.3823
    0.0400   -0.3810
    0.0500   -0.3539
    0.0600   -0.2840
    0.0700   -0.1680
    0.0800    0.0261
    0.0900    0.3229
    0.1000    0.6833
    0.1100    1.0729
    0.1200    1.4624
    0.1300    1.8270
    0.1400    2.1467
    0.1500    2.4065
    0.1600    2.5958
    0.1700    2.7089
    0.1800    2.7445
    0.1900    2.7059
    0.2000    2.6006
    0.2100    2.4401
    0.2200    2.2398
    0.2300    2.0186
    0.2400    1.7985
    0.2500    1.6043
    0.2600    1.4632
    0.2700    1.4045
    0.2800    1.3927
    0.2900    1.3438
    0.3000    1.2741
    0.3100    1.1988
    0.3200    1.1285
    0.3300    1.0696
    0.3400    1.0242
    0.3500    0.9917
    0.3600    0.9701
    0.3700    0.9571
    0.3800    0.9508
    0.3900    0.9498
    0.4000    0.9532
    0.4100    0.9602
    0.4200    0.9700
    0.4300    0.9813
    0.4400    0.9926
    0.4500    1.0017
    0.4600    1.0071
    0.4700    1.0073
    0.4800    1.0019
    0.4900    0.9913
    0.5000    0.9766
    0.5100    0.9593
    0.5200    0.9409
    0.5300    0.9225
    0.5400    0.9043
    0.5500    0.8862
    0.5600    0.8679
    0.5700    0.8490
    0.5800    0.8296
    0.5900    0.8103
    0.6000    0.7918
    0.6100    0.7751
    0.6200    0.7608
    0.6300    0.7491
    0.6400    0.7395
    0.6500    0.7314
    0.6600    0.7240
    0.6700    0.7165
    0.6800    0.7086
    0.6900    0.7001
    0.7000    0.6914
    0.7100    0.6827
    0.7200    0.6746
    0.7300    0.6673
    0.7400    0.6608
    0.7500    0.6552
    0.7600    0.6501
    0.7700    0.6452
    0.7800    0.6402
    0.7900    0.6347
    0.8000    0.6284
    0.8100    0.6212
    0.8200    0.6130
    0.8300    0.6038
    0.8400    0.5939
    0.8500    0.5835
    0.8600    0.5729
    0.8700    0.5625
    0.8800    0.5525
    0.8900    0.5430
    0.9000    0.5339
    0.9100    0.5270
    0.9200    0.5240
    0.9300    0.5235
    0.9400    0.5227
    0.9500    0.5173
    0.9600    0.5021
    0.9700    0.4708
    0.9800    0.4164
    0.9900    0.3311
    1.0000    0.2066
/\relax}\relax
\endpicture
}
\setbox\figuresix=\vbox{\hsize=\xfiglen
\beginpicture
\eightrm
  \setcoordinatesystem units <\xfiglen,0.6\yfiglen>  point at 0 -1
  \setplotarea x from 0 to 1, y from -1.0 to 1.0
  \axis bottom shiftedto y=0 ticks short numbered from 0.2 to 1 by 0.2 /
  \axis left ticks short from -1 to 1 by 0.5 /
  \put {{\eightrm 1}} [r] at -0.02 1
  \put {{\eightrm 0}} [r] at -0.02 0
  \put {{\eightrm -1}} [r] at -0.02 -1
\setsolid
\small
\putrule from 0.1 -0.6 to 0.19 -0.6 \put {$u_0$} [l] at 0.22 -0.6
\plot       % u(x,0)
         0         0
    0.0200    0.0314
    0.0400    0.0628
    0.0600    0.0941
    0.0800    0.1253
    0.1000    0.1564
    0.1200    0.1874
    0.1400    0.2181
    0.1600    0.2487
    0.1800    0.2790
    0.2000    0.3090
    0.2200    0.3387
    0.2400    0.3681
    0.2600    0.3971
    0.2800    0.4258
    0.3000    0.4540
    0.3200    0.4818
    0.3400    0.5090
    0.3600    0.5358
    0.3800    0.5621
    0.4000    0.5878
    0.4200    0.6129
    0.4400    0.6374
    0.4600    0.6613
    0.4800    0.6845
    0.5000    0.7071
    0.5200    0.7290
    0.5400    0.7501
    0.5600    0.7705
    0.5800    0.7902
    0.6000    0.8090
    0.6200    0.8271
    0.6400    0.8443
    0.6600    0.8607
    0.6800    0.8763
    0.7000    0.8910
    0.7200    0.9048
    0.7400    0.9178
    0.7600    0.9298
    0.7800    0.9409
    0.8000    0.9511
    0.8200    0.9603
    0.8400    0.9686
    0.8600    0.9759
    0.8800    0.9823
    0.9000    0.9877
    0.9200    0.9921
    0.9400    0.9956
    0.9600    0.9980
    0.9800    0.9995
    1.0000    1.0000
/
\setdashes <3pt>
\putrule from 0.1 -0.8 to 0.19 -0.8 \put {$v_0$} [l] at 0.22 -0.8
\plot       % v(x,0)
    0.0200    0.0941
    0.0400    0.1874
    0.0600    0.2790
    0.0800    0.3681
    0.1000    0.4540
    0.1200    0.5358
    0.1400    0.6129
    0.1600    0.6845
    0.1800    0.7501
    0.2000    0.8090
    0.2200    0.8607
    0.2400    0.9048
    0.2600    0.9409
    0.2800    0.9686
    0.3000    0.9877
    0.3200    0.9980
    0.3400    0.9995
    0.3600    0.9921
    0.3800    0.9759
    0.4000    0.9511
    0.4200    0.9178
    0.4400    0.8763
    0.4600    0.8271
    0.4800    0.7705
    0.5000    0.7071
    0.5200    0.6374
    0.5400    0.5621
    0.5600    0.4818
    0.5800    0.3971
    0.6000    0.3090
    0.6200    0.2181
    0.6400    0.1253
    0.6600    0.0314
    0.6800   -0.0628
    0.7000   -0.1564
    0.7200   -0.2487
    0.7400   -0.3387
    0.7600   -0.4258
    0.7800   -0.5090
    0.8000   -0.5878
    0.8200   -0.6613
    0.8400   -0.7290
    0.8600   -0.7902
    0.8800   -0.8443
    0.9000   -0.8910
    0.9200   -0.9298
    0.9400   -0.9603
    0.9600   -0.9823
    0.9800   -0.9956
    1.0000   -1.0000
 /
\endpicture
}
\setbox\figureseven=\vbox{\hsize=\xfiglen
\beginpicture
\eightrm
  \setcoordinatesystem units <\xfiglen,0.27\yfiglen>  point at 0 -2
  \setplotarea x from 0 to 1, y from -2 to 2.5
  \axis bottom shiftedto y=0 ticks short numbered from 0.2 to 1 by 0.2 /
  \axis left ticks short from -2 to 2.5 by 0.5 /
  \put {{\eightrm 2}} [r] at -0.02 2
  \put {{\eightrm 1}} [r] at -0.02 1
  \put {{\eightrm 0}} [r] at -0.02 0
  \put {{\eightrm -1}} [r] at -0.02 -1
  \put {{\eightrm -2}} [r] at -0.02 -2
\setsolid
\Blue{\relax \putrule from 0.06 -1.4 to 0.15 -1.4 \relax}\relax
\put {$g_u$} [l] at 0.17 -1.4
\Blue{\relax 
\plot
         0    0.2500
    0.0200    0.3127
    0.0400    0.3753
    0.0600    0.4378
    0.0800    0.5001
    0.1000    0.5622
    0.1200    0.6240
    0.1400    0.6854
    0.1600    0.7462
    0.1800    0.8066
    0.2000    0.8663
    0.2200    0.9254
    0.2400    0.9839
    0.2600    1.0418
    0.2800    1.0991
    0.3000    1.1556
    0.3200    1.2111
    0.3400    1.2656
    0.3600    1.3190
    0.3800    1.3712
    0.4000    1.4223
    0.4200    1.4723
    0.4400    1.5212
    0.4600    1.5690
    0.4800    1.6156
    0.5000    1.6609
    0.5200    1.7048
    0.5400    1.7470
    0.5600    1.7878
    0.5800    1.8271
    0.6000    1.8650
    0.6200    1.9012
    0.6400    1.9356
    0.6600    1.9682
    0.6800    1.9993
    0.7000    2.0288
    0.7200    2.0567
    0.7400    2.0826
    0.7600    2.1063
    0.7800    2.1280
    0.8000    2.1480
    0.8200    2.1663
    0.8400    2.1828
    0.8600    2.1973
    0.8800    2.2096
    0.9000    2.2198
    0.9200    2.2278
    0.9400    2.2339
    0.9600    2.2379
    0.9800    2.2402
    1.0000    2.2409
/\relax}\relax
\setdashes <3pt>
\Green{\relax \putrule from 0.06 -1.8 to 0.15 -1.8 \relax}\relax
\put {$g_v$} [l] at 0.17 -1.8
\Green{\relax
\plot
         0    0.2500
    0.0200    0.4374
    0.0400    0.6229
    0.0600    0.8052
    0.0800    0.9829
    0.1000    1.1544
    0.1200    1.3180
    0.1400    1.4717
    0.1600    1.6144
    0.1800    1.7453
    0.2000    1.8636
    0.2200    1.9678
    0.2400    2.0563
    0.2600    2.1278
    0.2800    2.1820
    0.3000    2.2194
    0.3200    2.2402
    0.3400    2.2441
    0.3600    2.2306
    0.3800    2.1991
    0.4000    2.1495
    0.4200    2.0824
    0.4400    1.9986
    0.4600    1.8993
    0.4800    1.7858
    0.5000    1.6593
    0.5200    1.5209
    0.5400    1.3711
    0.5600    1.2109
    0.5800    1.0419
    0.6000    0.8661
    0.6200    0.6851
    0.6400    0.5005
    0.6600    0.3132
    0.6800    0.1249
    0.7000   -0.0624
    0.7200   -0.2464
    0.7400   -0.4255
    0.7600   -0.5986
    0.7800   -0.7648
    0.8000   -0.9226
    0.8200   -1.0700
    0.8400   -1.2051
    0.8600   -1.3269
    0.8800   -1.4347
    0.9000   -1.5279
    0.9200   -1.6055
    0.9400   -1.6660
    0.9600   -1.7089
    0.9800   -1.7340
    1.0000   -1.7423
/\relax}\relax
\endpicture
}
\setbox\figureeight=\vbox{\hsize=\xfiglen
\beginpicture
\eightrm
  \setcoordinatesystem units <\xfiglen,0.7\yfiglen>  point at 0 0.5
  \setplotarea x from 0 to 1, y from 0.5 to 2.2
  \axis bottom shiftedto y=0.5 ticks short numbered from 0  to 1 by 0.2 /
  \axis left ticks short numbered from 0.5 to 2 by 0.5 /
\put {{\small $a(x)$}} [lt] at 0.02 2.2
\setsolid
\plot  % a_act_func()
    0.00    1.0000
    0.01    1.0631
    0.02    1.1259
    0.03    1.1876
    0.04    1.2470
    0.05    1.3034
    0.06    1.3558
    0.07    1.4035
    0.08    1.4457
    0.09    1.4819
    0.10    1.5115
    0.11    1.5342
    0.12    1.5497
    0.13    1.5578
    0.14    1.5586
    0.15    1.5520
    0.16    1.5384
    0.17    1.5181
    0.18    1.4915
    0.19    1.4592
    0.20    1.4219
    0.21    1.3802
    0.22    1.3351
    0.23    1.2873
    0.24    1.2378
    0.25    1.1875
    0.26    1.1374
    0.27    1.0885
    0.28    1.0417
    0.29    0.9980
    0.30    0.9581
    0.31    0.9230
    0.32    0.8933
    0.33    0.8697
    0.34    0.8528
    0.35    0.8430
    0.36    0.8406
    0.37    0.8460
    0.38    0.8591
    0.39    0.8800
    0.40    0.9085
    0.41    0.9443
    0.42    0.9871
    0.43    1.0363
    0.44    1.0914
    0.45    1.1516
    0.46    1.2162
    0.47    1.2842
    0.48    1.3549
    0.49    1.4271
    0.50    1.5000
    0.51    1.5725
    0.52    1.6435
    0.53    1.7122
    0.54    1.7774
    0.55    1.8384
    0.56    1.8942
    0.57    1.9441
    0.58    1.9873
    0.59    2.0233
    0.60    2.0515
    0.61    2.0716
    0.62    2.0833
    0.63    2.0864
    0.64    2.0810
    0.65    2.0670
    0.66    2.0448
    0.67    2.0147
    0.68    1.9771
    0.69    1.9326
    0.70    1.8819
    0.71    1.8256
    0.72    1.7647
    0.73    1.6999
    0.74    1.6322
    0.75    1.5625
    0.76    1.4918
    0.77    1.4211
    0.78    1.3513
    0.79    1.2834
    0.80    1.2181
    0.81    1.1564
    0.82    1.0989
    0.83    1.0463
    0.84    0.9992
    0.85    0.9580
    0.86    0.9230
    0.87    0.8946
    0.88    0.8727
    0.89    0.8574
    0.90    0.8485
    0.91    0.8457
    0.92    0.8487
    0.93    0.8569
    0.94    0.8698
    0.95    0.8866
    0.96    0.9066
    0.97    0.9288
    0.98    0.9525
    0.99    0.9765
    1.00    1.0000
 /
%\setlinear
\setdots <2pt>
\footnotesize
\Red{\relax \putrule from 0.05 1.74 to 0.14 1.74 \relax}\relax
\put {$T=0.05$} [l] at 0.17 1.74
\Blue{\relax \putrule from 0.05 1.88 to 0.14 1.88 \relax}\relax
\put {$T=0.1$} [l] at 0.17 1.88
\Red{\relax
\plot
         0    0.8571
    0.0100    0.9201
    0.0200    0.9825
    0.0300    1.0434
    0.0400    1.1018
    0.0500    1.1569
    0.0600    1.2078
    0.0700    1.2538
    0.0800    1.2942
    0.0900    1.3285
    0.1000    1.3562
    0.1100    1.3770
    0.1200    1.3906
    0.1300    1.3968
    0.1400    1.3958
    0.1500    1.3875
    0.1600    1.3722
    0.1700    1.3503
    0.1800    1.3222
    0.1900    1.2885
    0.2000    1.2498
    0.2100    1.2069
    0.2200    1.1605
    0.2300    1.1114
    0.2400    1.0607
    0.2500    1.0091
    0.2600    0.9577
    0.2700    0.9073
    0.2800    0.8589
    0.2900    0.8135
    0.3000    0.7717
    0.3100    0.7345
    0.3200    0.7025
    0.3300    0.6764
    0.3400    0.6567
    0.3500    0.6439
    0.3600    0.6383
    0.3700    0.6401
    0.3800    0.6496
    0.3900    0.6667
    0.4000    0.6912
    0.4100    0.7229
    0.4200    0.7614
    0.4300    0.8062
    0.4400    0.8569
    0.4500    0.9127
    0.4600    0.9729
    0.4700    1.0367
    0.4800    1.1032
    0.4900    1.1714
    0.5000    1.2405
    0.5100    1.3094
    0.5200    1.3772
    0.5300    1.4428
    0.5400    1.5054
    0.5500    1.5641
    0.5600    1.6179
    0.5700    1.6662
    0.5800    1.7082
    0.5900    1.7434
    0.6000    1.7712
    0.6100    1.7913
    0.6200    1.8034
    0.6300    1.8074
    0.6400    1.8032
    0.6500    1.7909
    0.6600    1.7707
    0.6700    1.7430
    0.6800    1.7081
    0.6900    1.6667
    0.7000    1.6193
    0.7100    1.5666
    0.7200    1.5095
    0.7300    1.4487
    0.7400    1.3852
    0.7500    1.3199
    0.7600    1.2536
    0.7700    1.1874
    0.7800    1.1222
    0.7900    1.0588
    0.8000    0.9980
    0.8100    0.9406
    0.8200    0.8874
    0.8300    0.8390
    0.8400    0.7958
    0.8500    0.7583
    0.8600    0.7268
    0.8700    0.7016
    0.8800    0.6828
    0.8900    0.6702
    0.9000    0.6637
    0.9100    0.6631
    0.9200    0.6681
    0.9300    0.6780
    0.9400    0.6923
    0.9500    0.7104
    0.9600    0.7313
    0.9700    0.7544
    0.9800    0.7786
    0.9900    0.8027
    1.0000    0.8245
/\relax}\relax
\setdashes <3pt>
\Blue{\relax 
\plot
         0    0.9230
    0.0100    0.9862
    0.0200    1.0490
    0.0300    1.1104
    0.0400    1.1696
    0.0500    1.2255
    0.0600    1.2774
    0.0700    1.3245
    0.0800    1.3660
    0.0900    1.4015
    0.1000    1.4304
    0.1100    1.4523
    0.1200    1.4671
    0.1300    1.4745
    0.1400    1.4745
    0.1500    1.4673
    0.1600    1.4530
    0.1700    1.4321
    0.1800    1.4050
    0.1900    1.3722
    0.2000    1.3343
    0.2100    1.2922
    0.2200    1.2466
    0.2300    1.1984
    0.2400    1.1484
    0.2500    1.0977
    0.2600    1.0471
    0.2700    0.9976
    0.2800    0.9502
    0.2900    0.9058
    0.3000    0.8652
    0.3100    0.8292
    0.3200    0.7986
    0.3300    0.7739
    0.3400    0.7559
    0.3500    0.7449
    0.3600    0.7412
    0.3700    0.7450
    0.3800    0.7566
    0.3900    0.7759
    0.4000    0.8027
    0.4100    0.8368
    0.4200    0.8777
    0.4300    0.9251
    0.4400    0.9782
    0.4500    1.0365
    0.4600    1.0993
    0.4700    1.1655
    0.4800    1.2344
    0.4900    1.3050
    0.5000    1.3763
    0.5100    1.4474
    0.5200    1.5172
    0.5300    1.5848
    0.5400    1.6491
    0.5500    1.7093
    0.5600    1.7645
    0.5700    1.8139
    0.5800    1.8569
    0.5900    1.8928
    0.6000    1.9212
    0.6100    1.9416
    0.6200    1.9538
    0.6300    1.9576
    0.6400    1.9530
    0.6500    1.9401
    0.6600    1.9192
    0.6700    1.8905
    0.6800    1.8544
    0.6900    1.8116
    0.7000    1.7627
    0.7100    1.7084
    0.7200    1.6495
    0.7300    1.5869
    0.7400    1.5214
    0.7500    1.4540
    0.7600    1.3857
    0.7700    1.3173
    0.7800    1.2499
    0.7900    1.1844
    0.8000    1.1215
    0.8100    1.0621
    0.8200    1.0069
    0.8300    0.9565
    0.8400    0.9115
    0.8500    0.8722
    0.8600    0.8391
    0.8700    0.8124
    0.8800    0.7921
    0.8900    0.7782
    0.9000    0.7706
    0.9100    0.7690
    0.9200    0.7730
    0.9300    0.7822
    0.9400    0.7959
    0.9500    0.8134
    0.9600    0.8339
    0.9700    0.8566
    0.9800    0.8806
    0.9900    0.9044
    1.0000    0.9257
/\relax}\relax
\endpicture
}
\setbox\figurenine=\vbox{\hsize=\xfiglen
\beginpicture
\eightrm
  \setcoordinatesystem units <1.1\xfiglen,0.018\yfiglen>  point at 0 -20
  \setplotarea x from 0 to 1, y from -20 to 60
  \axis bottom shiftedto y=0 ticks short numbered from 1 to 1 by 0.2 /
  \axis left ticks short from -20 to 60 by 20 /
  \put {{\eightrm -20}} [r] at -0.02 -20
  \put {{\eightrm 0}} [r] at -0.02 0
  \put {{\eightrm 20}} [r] at -0.02 20
  \put {{\eightrm 40}} [r] at -0.02 40
  \put {{\eightrm 60}} [r] at -0.02 60
\setsolid
\small
\Blue{\relax \putrule from 0.14 55 to 0.22 55 \relax}\relax
\put {{\footnotesize $W$}} [l] at 0.25 55
\setdashes <3pt>
\Red{\relax \putrule from 0.14 50 to 0.22 50 \relax}\relax
\put {{\footnotesize $W'$}} [l] at 0.25 50
\setsolid
\Blue{\relax 
\plot
         0   -2.0786
    0.0100   -1.9483
    0.0200   -1.5509
    0.0300   -0.9328
    0.0400   -0.2072
    0.0500    0.4749
    0.0600    0.9678
    0.0700    1.1759
    0.0800    1.0833
    0.0900    0.7596
    0.1000    0.3370
    0.1100   -0.0323
    0.1200   -0.2253
    0.1300   -0.1868
    0.1400    0.0552
    0.1500    0.4028
    0.1600    0.7254
    0.1700    0.9069
    0.1800    0.8864
    0.1900    0.6781
    0.2000    0.3639
    0.2100    0.0618
    0.2200   -0.1186
    0.2300   -0.1155
    0.2400    0.0628
    0.2500    0.3434
    0.2600    0.6163
    0.2700    0.7768
    0.2800    0.7640
    0.2900    0.5833
    0.3000    0.3036
    0.3100    0.0303
    0.3200   -0.1340
    0.3300   -0.1286
    0.3400    0.0429
    0.3500    0.3142
    0.3600    0.5814
    0.3700    0.7423
    0.3800    0.7343
    0.3900    0.5582
    0.4000    0.2774
    0.4100   -0.0061
    0.4200   -0.1891
    0.4300   -0.2061
    0.4400   -0.0532
    0.4500    0.2114
    0.4600    0.4891
    0.4700    0.6778
    0.4800    0.7101
    0.4900    0.5776
    0.5000    0.3334
    0.5100    0.0716
    0.5200   -0.1085
    0.5300   -0.1393
    0.5400   -0.0105
    0.5500    0.2283
    0.5600    0.4864
    0.5700    0.6664
    0.5800    0.7007
    0.5900    0.5766
    0.6000    0.3403
    0.6100    0.0792
    0.6200   -0.1115
    0.6300   -0.1641
    0.6400   -0.0638
    0.6500    0.1467
    0.6600    0.3842
    0.6700    0.5559
    0.6800    0.5952
    0.6900    0.4857
    0.7000    0.2666
    0.7100    0.0180
    0.7200   -0.1705
    0.7300   -0.2325
    0.7400   -0.1496
    0.7500    0.0430
    0.7600    0.2704
    0.7700    0.4469
    0.7800    0.5084
    0.7900    0.4354
    0.8000    0.2592
    0.8100    0.0483
    0.8200   -0.1184
    0.8300   -0.1818
    0.8400   -0.1247
    0.8500    0.0227
    0.8600    0.1960
    0.8700    0.3219
    0.8800    0.3470
    0.8900    0.2577
    0.9000    0.0852
    0.9100   -0.1080
    0.9200   -0.2528
    0.9300   -0.2999
    0.9400   -0.2386
    0.9500   -0.1000
    0.9600    0.0568
    0.9700    0.1685
    0.9800    0.1930
    0.9900    0.1255
    1.0000    0.0000
 /\relax}\relax
\setdashes <4pt>
\Red{\relax 
\plot
         0         0
    0.0100   18.5036
    0.0200   36.9783
    0.0300   50.8391
    0.0400   56.3664
    0.0500   51.8950
    0.0600   38.4131
    0.0700   19.3470
    0.0800   -0.4206
    0.0900  -15.9973
    0.1000  -23.9265
    0.1100  -23.1642
    0.1200  -15.2725
    0.1300   -3.7801
    0.1400    7.0611
    0.1500   13.6899
    0.1600   14.3431
    0.1700    9.4927
    0.1800    1.4963
    0.1900   -6.3805
    0.2000  -11.1970
    0.2100  -11.3782
    0.2200   -7.2009
    0.2300   -0.5771
    0.2400    5.7543
    0.2500    9.3047
    0.2600    8.7736
    0.2700    4.5007
    0.2800   -1.7184
    0.2900   -7.3651
    0.3000  -10.1889
    0.3100   -9.0751
    0.3200   -4.4610
    0.3300    1.8484
    0.3400    7.3880
    0.3500    9.9860
    0.3600    8.5981
    0.3700    3.7054
    0.3800   -2.8701
    0.3900   -8.6574
    0.4000  -11.4711
    0.4100  -10.2331
    0.4200   -5.3739
    0.4300    1.3346
    0.4400    7.4481
    0.4500   10.7643
    0.4600   10.1392
    0.4700    5.9001
    0.4800   -0.2951
    0.4900   -6.0937
    0.5000   -9.3369
    0.5100   -8.8640
    0.5200   -4.9343
    0.5300    0.8906
    0.5400    6.3518
    0.5500    9.3527
    0.5600    8.7468
    0.5700    4.7583
    0.5800   -1.1179
    0.5900   -6.6919
    0.6000   -9.9096
    0.6100   -9.6222
    0.6200   -6.0074
    0.6300   -0.4866
    0.6400    4.8238
    0.6500    7.9160
    0.6600    7.6433
    0.6700    4.1382
    0.6800   -1.2535
    0.6900   -6.4927
    0.7000   -9.6177
    0.7100   -9.4737
    0.7200   -6.1302
    0.7300   -0.8347
    0.7400    4.4724
    0.7500    7.8905
    0.7600    8.2611
    0.7700    5.5748
    0.7800    0.9380
    0.7900   -3.8760
    0.8000   -7.1116
    0.8100   -7.6864
    0.8200   -5.5721
    0.8300   -1.7637
    0.8400    2.1375
    0.8500    4.5600
    0.8600    4.5653
    0.8700    2.1908
    0.8800   -1.5951
    0.8900   -5.2949
    0.9000   -7.4771
    0.9100   -7.3265
    0.9200   -4.9390
    0.9300   -1.2519
    0.9400    2.3516
    0.9500    4.5998
    0.9600    4.8275
    0.9700    3.2128
    0.9800    0.6677
    0.9900   -1.5654
    1.0000   -2.4444
 /\relax}\relax
\endpicture
}

\section{Reconstructions}\label{sect:recons}

We will show the results of numerical experiments with the three versions
of the basic iterative scheme:  compute $a$, $q$ in parallel;
eliminate $q$ recover $a$; eliminate $a$ recover $q$,

In the reconstructions to be shown we used the initial values $\alpha=1$, $T=0.5$
and a noise level (uniformly distributed) of 1\% as a basis for discussion.
At the end of the section we will indicate the effective dependence of the
reconstruction process on these quantities.
The reconstructions we show will be in $\mathbb{R}$ as the
graphical illustration is then more transparent and there is little to be
gained technically or visually from higher dimensions.

We will also take the following actual functions to be reconstructed as
$$
a_{\rm act}(x) = 1 + 4x^2(1-x) + 0.5\sin(4\pi x)
\quad\qquad
q_{\rm act}(x) = 8x\,e^{-3x}
$$
As data we took two differing initial values $u_0(x)$ and $v_0(x)$
and as boundary conditions we used (nonhomogeneous)
Dirichlet at the left endpoint and Neumann at the right; typically
different for each of $u$ and $v$.

One such data set is shown in Figure~\oldref{fig:data_values}
\begin{figure}[ht]
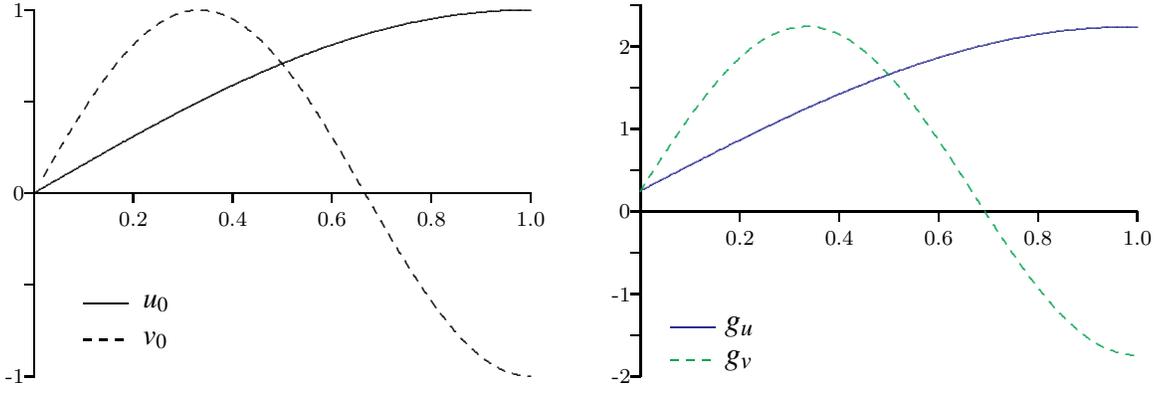

\hbox to\hsize{\copy\figuresix\hss \copy\figureseven}
\caption{\small {\bf Initial values $u_0(x)$, $v_0(x)$ and data $g_u=u(x,T)$, $g_v=v(x,T)$}}
\label{fig:data_values}
\end{figure}

\subsection{Performance of the three schemes}

In the parallel scheme with actual values,
since we make no constraints on the form of the unknown functions
other than sufficient regularity,
we do not choose a basis with in-built restrictions as would be
obtained from an eigenfunction expansion.
Instead we used a radial basis of shifted Gaussian functions
$b_j(x) := e^{-(x-x_j)^2/\sigma}$ centered at nodal points  $\{x_j\}$
and with width specified by the parameter $\sigma$.
The linear operator $\mathbb{M}$, in equation~\eqref{eqn:uv_sys2}
then takes the matrix form
$$
\mathbb{M} =
\begin{bmatrix}
A_1 & Q_1 \\
A_2 & Q_2
\end{bmatrix}
$$
where $A_1$ denotes the representation of $a(x)$ using the values $g_u$
and $Q_2$ the representation of $q(x)$ using the values $g_v$.

The sequential schemes are based on eliminating one of $a(x)$ or $q(x)$ and
having $\mathbb{M}$ represented through pointwise values of the functions
$W$ and $W'$.

The singular values of the component matrices $A_1$ and $Q_1$ are shown in
the leftmost figure in Figure~\oldref{fig:sv_AQ_WWp}
and the functions $W$ and $W'$ in the rightmost figure .

\begin{figure}[ht]
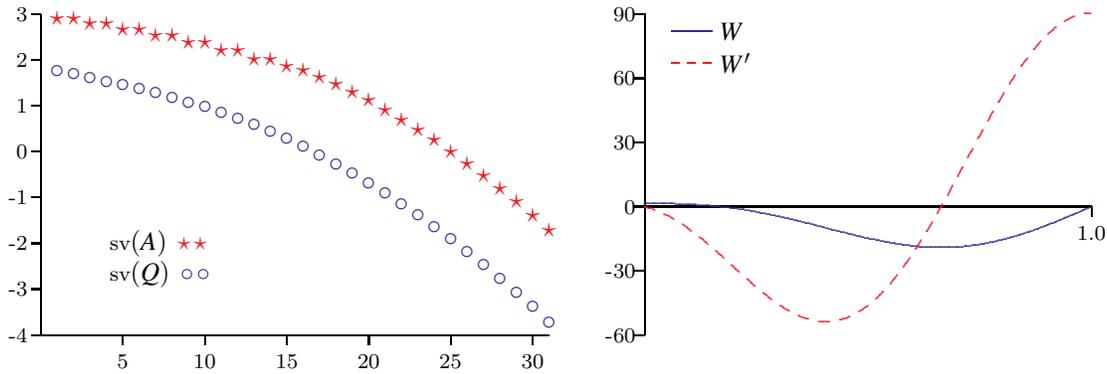

\hbox to \hsize{\hss\copy\figureone\hss\hss\copy\figureoneb\hss}
\caption{\small {\bf  Left:  Singular values of the matrices $A$, $Q$:\qquad
Right: The functions $W$ and $W'$}}
\label{fig:sv_AQ_WWp}
\end{figure}

Even before seeing the resulting reconstructions it is clear the far superior
conditioning of the $A$ matrix over that of $Q$ -- a factor of over 10 in
the larger singular values and of 100 in the lower ones --
is going to significantly favour the reconstruction of the $a(x)$ coefficient.
This is also borne out from the rightmost figure here:
while the values of the function $W$ are modest,
there is a much larger range in the values of $W'$.
Note that from equation~\eqref{eqn:a_uncoupled}
the reconstruction of $a(x)$ requires only $W$,
but that of \eqref{eqn:q_uncoupled} which updates $q(x)$ requires
both $W$ and $W'$.
Indeed this turns out to be the situation in both cases.

Using the parallel scheme reconstructions of $a$ and $q$
from two initial/boundary values under $1\%$ random uniform noise are shown in
Figure~\oldref{fig:recon_a_q_parallel}
The initial approximations  were $a(x)=1$ and $q(x)=0$.
The first iteration resulted in an already near perfect reconstruction of
the $a(x)$ coefficient but that of $q(x)$ lagged significantly behind and
in the end the error in the data measurements were predominately in this
coefficient.

\begin{figure}[ht]
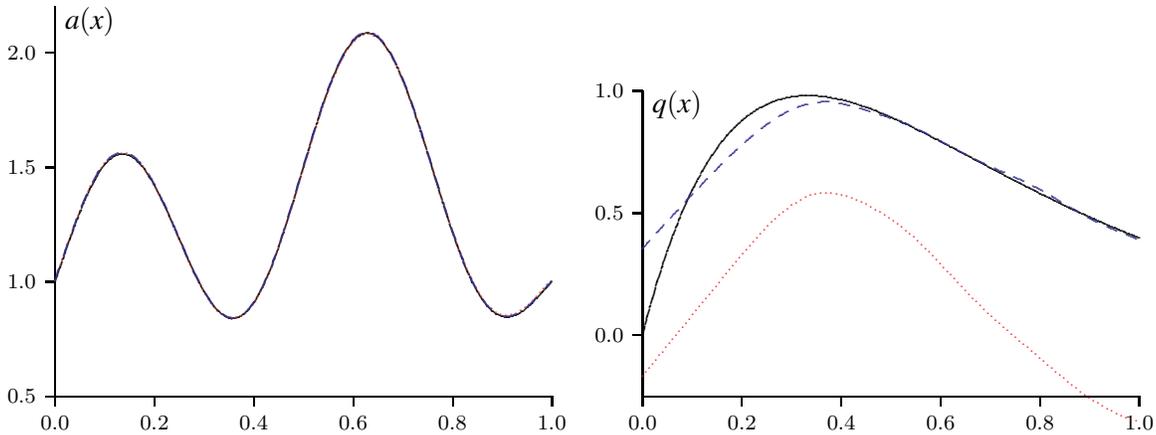

\hbox to\hsize{\copy\figuretwo\hss \copy\figurethree}
\caption{\small {\bf  Reconstructions of $a$ and $q$ using parallel algorithm}}
\label{fig:recon_a_q_parallel}
\end{figure}

In the eliminate $q$ and update $a$ strategy some care must be taken here
as the function $W(x)$ has two zeros; an interior one around $\tilde x = 0.17$
and at the endpoint $x=1$.
Thus a straightforward division by $W$ to recover $a$ from
\eqref{eqn:a_uncoupled} isn't possible.
In theory the right-hand side $\Phi(x)=\int_0^x\phi(s)\,ds$
 term should also vanish at these points including the interior one $\tilde x$,
but with data noise this isn't going to be the case.
In practice we found the following device worked very well.
Remove a small segment of several grid points from an interval
$I_{\tilde x} =(\tilde x-\delta,\tilde x+\delta)$, set $\Phi(\tilde x) = 0$ and then
fill in the interval by interpolation using say a smoothing spline
with the level of smoothing chosen depending on the assumed noise in $\phi$.
Then a straight division recovers $a(x)$.
A new direct solve then is used to recover the next iteration of $q(x)$.
As Figure~\oldref{fig:a_q_iter2} shows the results are comparable to the previous
reconstruction.

It is worth observing from Figures~\oldref{fig:data_values} and
\oldref{fig:a_q_iter2},
that those regions where the reconstructions of $q$ are poorest coincide with
regions of smaller values of $g_u(x)=u(x,T)$ and $g_v(x)=v(x,T)$,
namely near the left hand point of the interval.
This is in keeping with the fact that both $W$ and $W'$ are smaller in
magnitude at these points.

\begin{figure}[ht]
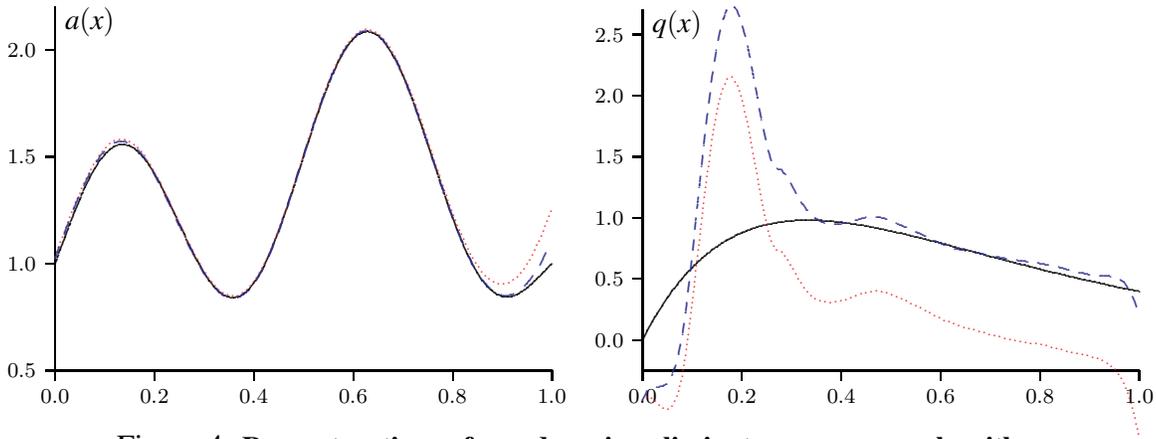

\hbox to\hsize{\copy\figurefour\hss \copy\figurefive}
\caption{\small {\bf Reconstructions of $a$ and $q$ using eliminate $a$,
recover $q$ algorithm}}
\label{fig:a_q_iter2}
\end{figure}

We do not show a reconstruction for the version based on the update
equation~\eqref{eqn:q_uncoupled}. While a somewhat satisfactory reconstruction
of the coefficient $a(x)$ was obtained, the scheme failed to converge for
the coefficient $q(x)$.
This fact alone shows how dominant a role the diffusion coefficient plays
in the process at the expense of the much weaker potential term.
It is only under significantly less data noise
that an effective reconstruction of the latter was possible.
The relative rates of convergence of all three versions is shown in
the displayed table.% ~\oldref{table:a_q_norms}

\begin{table}[ht]
\footnotesize
\renewcommand{\arraystretch}{1.1}
%\extrarowheight=3pt
\centering
\begin{tabular}{| l c c c c c c |}
\hline
\hline
Iteration & 1 & 2 & 3 & 4 & 5 & 6\\
\hline
\hline
{\small{\bf  Parallel scheme}}& & & & & & \\
\qquad$\|a_n-a_{\rm act}\|_\infty$ & 0.0046 & 0.0071 & 0.0052 & 0.0042 & 0.0038 & - \\
\qquad$\|q_n-q_{\rm act}\|_\infty$ & 0.7723 & 0.2845 & 0.3368 & 0.3561 & 0.3618 & - \\
\qquad$\|a_n-a_{\rm act}\|_2$ & 0.0039 & 0.0093 & 0.0055 & 0.0038 & 0.0031 & - \\
\qquad$\|q_n-q_{\rm act}\|_2$ & 0.7372 & 0.3109 & 0.1433 & 0.0980 & 0.0904 & - \\
\hline
{\small{\bf  Eliminate $q$ scheme}}& & & & & & \\
\qquad$\|a_n-a_{\rm act}\|_\infty$ & 0.0084 & 0.0099 & 0.0076 & 0.0065 & 0.0062 & 0.0061 \\
\qquad$\|q_n-q_{\rm act}\|_\infty$ & 0.8080 & 0.2957 & 0.3066 & 0.3258 & 0.3321 & 0.3343 \\
\qquad$\|a_n-a_{\rm act}\|_2$ & 0.0053 & 0.0092 & 0.0051 & 0.0036 & 0.0032 & 0.0030 \\
\qquad$\|q_n-q_{\rm act}\|_2$ & 0.8510 & 0.3320 & 0.1442 & 0.0961 & 0.0892 & 0.0886 \\
\hline
{\small{\bf  Eliminate $a$ scheme}}& & & & & & \\
\qquad$\|a_n-a_{\rm act}\|_\infty$ & 0.1254 & 0.0541 & 0.0518 & 0.0520 & 0.0521 & - \\
\qquad$\|q_n-q_{\rm act}\|_\infty$ & 1.3512 & 1.7705 & 1.8898 & 1.9318 & 1.9462 & - \\
\qquad$\|a_n-a_{\rm act}\|_2$ & 0.0347 & 0.0151 & 0.0123 & 0.0119 & 0.0118 & - \\
\qquad$\|q_n-q_{\rm act}\|_2$ & 0.9042 & 0.7562 & 0.7626 & 0.7788 & 0.7860 & - \\
\hline
\end{tabular}\label{table:norms}
\caption{\small {\bf Norm differences for versions of the iteration}}
\end{table}

The difference in the iteration counts in the table is
due to the use of a discrepancy principle as a stopping rule,
which terminates the iteration as soon as the residual drops below the noise
level. The discrepancy principle is well established as a regularization
parameter choice for ill-posed problems see, e.g.,
\cite{EnglHankeNeubauer:1996}, \cite{Morozov67}.
Here note that regularization is done by smoothing the data here
(cf. Remark \oldref{rem:noise}) so the stopping index does not act as a
regularization parameter; merely the discrepancy principle appears to find
a good final iterate.

\subsection{Changing some of the parameters}

The above reconstructions were set with the final time taken to be
$T=0.5$; the question is how the schemes would progress for different $T$
In particular, we are interested in this as one should expect the contraction
constant (if indeed there is a contraction) to be smaller with increasing $T$.

The answer is much as expected; the contraction constant varies with $T$,
and more generally, so does the strength of the nonlinear contribution
terms $u_t(x,T;a,q)$ and $v_t(x,T;a,q)$.
We would thus expect more iterations to be required as $T$ was reduced and
indeed this is the case.
As an illustration of this effect, Figure~\oldref{fig:a_T_parallel} shows
the first iterate of $a_1(x)$, for the values of $T=0.1$ and $T=0.05$.
Recall from Figure~\oldref{fig:recon_a_q_parallel} that even the first iteration
$a_1$ was sufficiently close to the actual when $T=0.5$ so that it was barely
indistinguishable from the actual.
We do not show the reconstructions of $q(x)$ here as in neither case of
$T=0.1$ nor of $T=0.05$ did these converge.
Indeed for these smaller value of $T$ the iterations oscillated widely
without any sense of convergence, it being quite clear that we were in a region
of non-convergence.

\begin{figure}[ht]
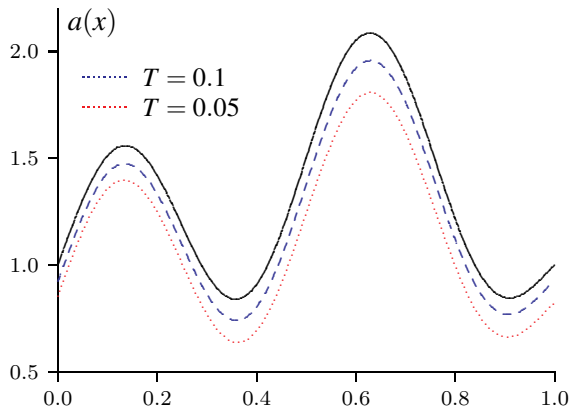

\hbox to\hsize{\hss\copy\figureeight\hss} % \copy\figurethree}
\caption{\small Variation of the first iterate $a_1$ with $T$}
\label{fig:a_T_parallel}
\end{figure}

Since the coefficient $a(x)$ is not the determining factor in the
reconstruction process,
as a function of $\alpha$ the scheme reverts to our ability to determine $q$
exactly as in \cite{KaltenbacherRundell:2019b}.
Thus, as expected, the contraction constant increases with decreasing $\alpha$;
quite sharply at first.
Hence the convergence rate decreases with decreasing $\alpha$.
\medskip

The variation of the schemes with noise level is now predictable.
With an error much in excess of $1\%$ the schemes degrade rapidly.
Even for errors less than this value, the schemes based on pointwise evaluation
and requiring $W'$ did quite poorly, especially for smaller values of the
time measurement $T$ and the non-parabolic case.

\subsection{Measurements at two later times}

Finally, we indicate a little about the possibility of a single data run but
taking spatial measurements at two later times $t=T_1$ and $t=T_2$.
Some reflection will show the inherent difficulties here.
Unless there is some dynamic change in the solution profile for $T_1<t<T_2$
the likelihood is that
$g_1(x) = u(x,T_1)$ and $g_2(x) = u(x,T_2)$
will not differ sufficiently to allow effective division by the
``Wronskian'' function $W = g_1g_2' - g_1'g_2$.
Such a change could occur by an input of a large nonhomogeneous forcing function
or a change in the boundary conditions between these time values.

Consider the case when there are no such changes; we use the first solution
$u(x,t)$ above but take $g_1=u(x,T/2)$ and $g_2=u(x,T)$, with $T$ again the
value $T=0.5$.
The story here is told by looking at a plot of the resulting functions $W$
and $W'$ as shown in Figure~\oldref{fig:sv_W_Wp_just_u}
which highlights the weak linear independence of the two data values.

\begin{figure}[ht]
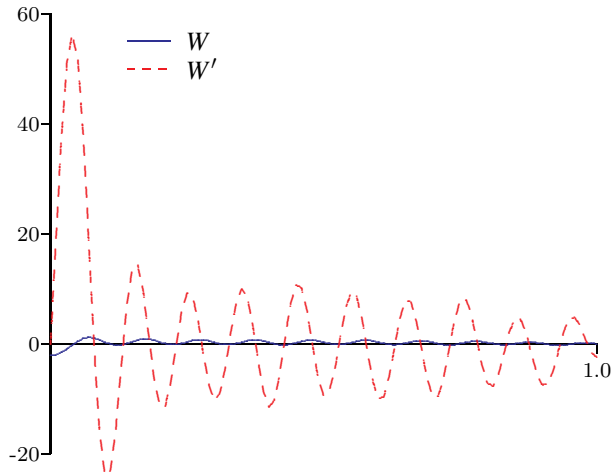

\hbox to \hsize{\hss\copy\figurenine\hss}
\caption{\small {\bf The functions $W$ and $W'$ for $g_v = u(x,\frac{T}{2})$}}
\label{fig:sv_W_Wp_just_u}
\end{figure}

This shows $W$ is very small in magnitude with several zeros;
$W'$ is in consequence highly oscillatory and relatively large in magnitude.
These are exactly the features one would try to avoid in making up a data set.

\subsection{Inclusion of a known nonlinearity}

In \cite{KaltenbacherRundell:2019b} it was shown that it was possible
to reconstruct the potential $q(x)$ occurring in the term $q(x)f(u)$
where $f(u)$ is assumed known.
Thus equation~\eqref{eqn:nonlin_pde} seeks to generalize this to also include
the determination of the conductivity $a(x)$.

Some of the methods described in section~\ref{subsec:iter} will no longer
quite work as, for example, the possibility to eliminate $q$
isn't directly feasible as in the case of $f(u)=u$.
However the scheme based on basis functions and determining $a$ and $q$ in
parallel goes through exactly as before since the operator $\mathbb{M}$
remains linear due to the fact that we are evaluating the known $f$ at
the data values $g_u$ and $g_v$.

Reconstructions for this case do depend quite strongly on $f$.
If $f(u)$ decays more rapidly than $u$ itself then it is quite likely
the recovery of $q(x)$ will be even more challenging.
On the other hand if $f(u)$ is substantially larger than $u$
(but assuming the resulting solutions of equation~\eqref{eqn:nonlin_pde}
remain bounded) then the term involving $q(x)$ plays a more substantial
role and the recovery of $q$ is much improved.

\section*{Acknowledgment}

\noindent
The work of the first author was supported by the Austrian Science Fund {\sc fwf}
under the grants I2271 and P30054.

\noindent
The work of the second author was supported 
in part by the
National Science Foundation through award {\sc dms}-1620138.

%%%%%%%%%%%%%%%%%%%%%%%%%%%%%%%%%%%%%%%%%%%%

\end{document}